\theoremstyle{definition} 
\newtheorem{theorem}{Theorem}[section]
\newtheorem{definition}[theorem]{Definition}
\newtheorem{lemma}[theorem]{Lemma}
\newtheorem{fact}[theorem]{Fact}
\newtheorem{prop}[theorem]{Proposition}
\newtheorem*{theorem*}{Theorem}
\newtheorem{claim}{Claim}[theorem]
\newtheorem*{claim*}{Claim}
\newtheorem*{subclaim*}{Subclaim}
\newtheorem{cor}[theorem]{Corollary}
\newenvironment{renumerate}%
{\begin{enumerate}}{\end{enumerate}}
{\begin{enumerate}}{\end{enumerate}}
\newenvironment{aenumerate}%
{\begin{enumerate}}{\end{enumerate}}
\newtheorem*{lemma*}{Lemma} 
\newtheorem*{prop*}{Proposition}
\newcommand{\mrm}{\mathrm} 
\newcommand{\mbb}{\mathbb} 
\newcommand{\dom}{\mathop{\mathrm{dom}} \nolimits}
\newcommand{\id}{\mathop{\mathrm{id}} \nolimits}
\newcommand{\height}{\mathop{\mathrm{height}} \nolimits}
\newcommand{\lchon}{\textrm{``}}
\newcommand{\rchon}{\textrm{''}}
\newcommand{\mbbB}{\mathbb{B}}
\newcommand{\mbbC}{\mathbb{C}}
\newcommand{\mbbP}{\mathbb{P}}
\newcommand{\mbbQ}{\mathbb{Q}} 
\newcommand{\mbbR}{\mathbb{R}} 
\newcommand{\mbbS}{\mathbb{S}}
\newcommand{\mcalH}{\mathcal{H}}
\newcommand{\mcalP}{\mathcal{P}}
\newcommand{\restrict}{\upharpoonright} 
\newcommand{\bool}[1]{\llbracket {#1} \rrbracket}
\newcommand{\coll}{\mathop{\mathrm{Coll}} \nolimits}
\newcommand{\add}{\mathop{\mathrm{Add}} \nolimits}
\newcommand{\ZFC}{\mathsf{ZFC}} 
\newcommand{\ZF}{\mathsf{ZF}} 
\newcommand{\AD}{\mathsf{AD}} 
\newcommand{\AC}{\mathsf{AC}} 
\newcommand{\DC}{\mathsf{DC}}
\newcommand{\Eval}{\mathrm{Eval}}
\newcommand{\Ord}{\mathrm{Ord}}
\newcommand{\LR}{L(\mathbb{R})}
\newcommand{\VRVG}{V( \mbbR^{V[G]})}
\newcommand{\VmuVG}{V(({\mu^\mu})^{V[G]})}
\newcommand{\VomegaVG}{V(({\omega^\omega})^{V[G]})}
\begin{document}

\title{Perfect set dichotomy theorem in  generalized Solovay model}

\author[1]{Hiroshi Sakai 
\thanks{This author is supported by JSPS Kakenhi Grant No. 24K06828.}
}
\author[2]{Toshimasa Tanno \thanks{This author is supported by JST SPRING Grant No. JPMJSP2148.}
}
\affil[1]{Graduate School of Mathematical Sciences, The University of Tokyo, 3-8-1 Komaba, Meguro-ku,  153-8914, Tokyo, Japan. \par
E-mail : hsakai@ms.u-tokyo.ac.jp}
\affil[2]{Graduate School of System Informatics, Kobe University, 1-1 Rokko-dai, Nada-ku, 657-8501, Kobe, Japan. \par
E-mail : 211x503x@stu.kobe-u.ac.jp}

\date{}

\maketitle

\begin{abstract}
  We prove that the perfect set dichotomy theorem holds in the Solovay model $\VomegaVG$. Namely, for every equivalence relation $E$ on $\mathbb{R}$, either $\mathbb{R}/E$ is well-orderable or there exists a perfect set consisting of $E$-inequivalent reals.   
  Furthermore we consider a  generalization of the Solovay model for an uncountable regular cardinal $\mu$
  and show the perfect set dichotomy theorem for $\mu^\mu$ also holds in that model.
  We establish the three element basis theorem for uncountable linear orders in the Solovay model for a weakly compact cardinal, in a general form  covering the uncountable case.
\end{abstract}

\section{Introduction}
 
Solovay \cite{solovay1970model} constructed a model of $\ZF + \DC$ in which all sets of reals have regularity properties such as the Baire property, Lebesgue measurability and the perfect set property. This model is called the Solovay model.
The Solovay model can be regarded as an ``ideal'' universe of mathematics in which there are no ``pathological'' sets of reals.
For example, T\"{o}rnquist \cite{tornquist2018definability} proved that there are no infinite mad families in the Solovay model.
In this model, every subset of the reals  has regularity properties of definable sets.

Among these properties, in this paper we focus on the \textit{perfect set dichotomy} for equivalence relations on the reals.
For an equivalence relation $E$ on the reals, the perfect set dichotomy asserts that either 
$\mathbb{R}/E$ is well-orderable or there exists a perfect set of pairwise $E$-inequivalent reals.
Classical results establish the dichotomy for definable equivalence relations.
Silver \cite{silver1980counting} proved that if $E$ is a $\mathbf{\Pi}^1_1$ equivalence relation then $\mbbR / E$ injects into $\omega$ or there is a perfect set of pairwise $E$-inequivalent reals.
Burgess \cite{burgess1979effective} showed that if $E$ is a $\mathbf{\Sigma}^1_1$ equivalence relation then $\mbbR / E$ injects into $\omega_1$ or there is a perfect set of pairwise $E$-inequivalent reals.
In the context of hypotheses that guarantee regularity properties for all sets of reals, 
Woodin showed that the perfect set dichotomy holds for all equivalence relations on $\mbbR$ under the assumption of $\ZF + \AD + V=L(\mathbb{R})$ (Theorem 3.2 in \cite{chan2021cardinality}).
In this paper we prove that the perfect set dichotomy holds for all equivalence relations on $\mathbb{R}$ in the Solovay model (Theorem \ref{dic_equi_Solovay}).

Furthermore, we consider a generalization of the Solovay model to higher cardinals.
Research on subsets of the generalized Baire space $\mu^\mu$ for an uncountable cardinal $\mu$ has been developed within the framework of generalized descriptive set theory.
The Solovay model is obtained by collapsing an inaccessible cardinal to $\omega_1$.
We consider the model obtained by collapsing an inaccessible cardinal $\kappa$ to $\mu^+$ for an uncountable regular cardinal $\mu$.
Schlicht \cite{MR3743612} showed the perfect set property for subsets of $\mu^\mu$ in this model.
We show that the perfect set dichotomy also holds for all equivalence relations on $\mu^\mu$ in this model (Theorem \ref{thm:psd_genSolovay}).

As an application of the perfect set dichotomy, we derive the basis theorem for uncountable linear orders in the Solovay model for a weakly compact cardinal.
Regarding linear orders in the Solovay model,
Chan and Jackson \cite{chan2019suslin} showed that if $\LR$ satisfies $\AD$ then in $\LR$ there are no Suslin lines.
That paper also presents Woodin's argument proving that there are no Suslin lines in the Solovay model for a weakly compact cardinal.
We show that there are no Aronszajn lines in such models.
More precisely, we prove that $\{(\omega_1, <), (\omega_1, >), (\mbbR, <_\mbbR)\}$ is a basis for uncountable linear orders in the Solovay model obtained by collapsing a weakly compact cardinal to $\omega_1$ (Corollary \ref{cor:three_element_basis_Sol}),
and in $\LR$ which satisfies $\AD$ (Theorem \ref{thm:three_element_basis_AD})\footnote{Weinert independently noticed that this fact holds in $\LR$ satisfying $\AD$.}.
We prove this theorem in a general form that also covers the generalized Solovay model (Theorem \ref{Solovay_basis}).

When we discuss the Solovay model, we work under the following settings, where $\coll(\mu, {<}\kappa)$ is the L\'{e}vy collapse forcing $\kappa$ to be $\mu^+$.
\begin{itemize}

\item[$(\ast)_{\mu,\kappa}$] $\mu$ is a regular cardinal and $\kappa$ is an inaccessible cardinal with $\mu < \kappa$ in $V$. 
$G$ is a $\coll(\mu, {<}\kappa)$-generic filter over $V$.

\item[$(\ast\ast)_{\mu, \kappa}$] 
$\mu$ is a regular cardinal and $\kappa$ is a weakly compact cardinal with $\mu < \kappa$ in $V$. 
$G$ is a $\coll(\mu, {<} \kappa)$-generic filter over $V$.
\end{itemize}

See \S \ref{sec:preliminaries} for more details about forcing notions and the Solovay model.
We will refer to the above settings in many places in this paper.

\section{Preliminaries} \label{sec:preliminaries}

Here we present our notation and basic facts used in this paper.

In this paper, we basically distinguish between $\omega^\omega$ and $\mathbb{R}$ 
(while implicitly using the existence of a definable bijection between them). 
$\omega^\omega$ is the set of all infinite countable sequences of elements of $\omega$, equipped with the topology generated by the basic open sets 
\begin{center}
  $N_s = \{ x \in \omega^\omega \mid s \subseteq x\}$
\end{center}
for $s \in \omega^{<\omega}$.
$\mathbb{R}$ is the real line $(\mbbR, <_{\mbbR})$ with the usual total order.

Next we give our notation on ordered sets.
We deal with several kinds of orderings.
A \textit{pre-ordering} is a binary relation which is reflexive and transitive.
A \textit{poset} is a pre-ordered set.
For a poset $\mbbP$, its domain is also denoted as $\mbbP$, and its order is denoted as $\leq_\mbbP$. The subscript $\mbbP$ in $\leq_\mbbP$ will be often omitted.
A \textit{partial ordering} is an antisymmetric pre-ordering.
A \textit{pre-linear ordering} is a pre-ordering by which any two elements are comparable.
A \textit{linear ordering} is an antisymmetric pre-linear ordering.

Let $\mbbP$ be a poset.
For $p,q \in \mbbP$ we write $p <_\mbbP q$ if $p \leq_\mbbP q$, and $q \not\leq_\mbbP p$.
For $p\in \mbbP$, $\mbb{P} \restrict p$ denotes the poset obtained by restricting $\mbb{P}$ to $\{ p' \in \mbb{P} \mid p' \leq_\mbbP p \}$.

Let $\mbbP$ and $\mbbQ$ be posets and $f$ be a function from $\mbbP$ to $\mbbQ$.
$f$ is said to be \textit{order preserving} if $f( p_0 ) \leq_\mbbQ f( p_1 )$ whenever $p_0 \leq_\mbbP p_1$. $f$ is said to be \textit{strictly order preserving} if $f( p_0 ) <_\mbbQ f( p_1 )$ whenever $p_0 <_\mbbP p_1$.
$f$ is called an \textit{embedding} if $f$ is an isomorphism from $P$ to $( f[P] , \leq_\mbbQ )$.
$\mbbP$ is 
\textit{embeddable} into $\mbbQ$ if there is  
an embedding from $\mbbP$ to $\mbbQ$.

Next we present our notation and basic facts on forcing.
In this paper, we are only concerned with forcings over trainsitive models of $\ZFC$.

Let $\mbb{P}$ and $\mbb{Q}$ be posets with the largest elements $1_\mbbP$ and $1_\mbbQ$, respectively.
A function $f : \mbb{Q} \to \mbb{P}$ is called a \emph{projection} if
\begin{renumerate}
\item $f$ is order preserving,
\item $f( 1_\mbb{Q} ) = 1_\mbb{P}$,
\item for any $q \in \mbb{Q}$ and any $p \in \mbb{P}$ with $p \leq_\mbbP f(q)$ there is $q' \leq_\mbbQ q$ such that $f( q' ) \leq_\mbbP p$.
\end{renumerate}

 A function $f : \mbb{Q} \to \mbb{P}$ is called a \emph{complete embedding} if
 \begin{renumerate}
\item $f$ is order preserving,
\item $f(1_\mbbQ) = 1_{\mbbP}$, 
\item $f( q_0 )$ and $f( q_1 )$ are incompatible in $\mbbP$
whenever $q_0$ and $q_1$ are incompatible in $\mbbQ$,
\item for any $p \in \mbbP$ there is $q \in \mbbQ$ such that 
for every $r \leq q$ in $\mbbQ$, $f(r)$ is compatible with $p$.
\end{renumerate}
$\mbbQ \subseteq \mbbP$ is \textit{complete subposet} if 
the inclusion map is a complete embedding.

A function $d : \mbb{Q} \to \mbb{P}$ is called a \emph{dense embedding} if
\begin{renumerate}
\item $d$ is order preserving,
\item $d( q_0 )$ and $d( q_1 )$ are incompatible in $\mbbP$
whenever $q_0$ and $q_1$ are incompatible in $\mbbQ$.
\item $d[ \mbb{Q} ]$ is dense in $\mbb{P}$, and $d( 1_\mbb{Q} ) = 1_\mbb{P}$.
\end{renumerate}
Notice that every dense embedding is a projection.

Suppose $d : \mbb{Q} \to \mbb{P}$ is a dense embedding in $V$. 
If $H$ is a $\mbb{Q}$-generic filter over $V$, then a filter $G \subseteq \mbb{P}$ generated by $d[H]$
is $\mbb{P}$-generic over $V$, and $V[G] = V[H]$.
If $G$ is a $\mbb{P}$-generic filter over $V$, then $H := d^{-1} [G]$ is a $\mbb{Q}$-generic filter over $V$,
and $V[H] = V[G]$.

Suppose $f \colon \mbb{P} \to \mbb{Q}$ is a complete embedding.
If $G$ is a $\mbbP$-generic filter over $V$,
then the quotient forcing $( \mbbQ / G )_f$ is defined as
\begin{center}
    $(\mbbQ / G)_f = 
    \{q\in \mbbQ \mid \forall p\in G \ f (p) \  || \  q\}$.
\end{center}

If $H$ is a $\mbbQ$-generic filter over $V$ then
$G := f^{-1}[H]$ is a $\mbbP$-generic filter over $V$.
If $H$ is a $\mbbQ / G$-generic filter over $V[G]$
then $H$ is a $\mbbP$-generic filter over $V$.

Suppose $f \colon \mbb{Q} \to \mbb{P}$ is a projection.
If $G$ is a $\mbbP$-generic filter over $V$,
then $( \mbbQ / G )^f$ is the poset in $V[G]$ obtained by restricting
$\mbbQ$ to $f^{-1} [G]$.
In $V$, $d \colon \mbb{Q} \to \mbb{P} * ( \mbb{Q} / \dot{G} )^f$ defined by $d(q) := ( f(q) , \check{q} )$ is a dense embedding, where
$\dot{G}$ is the canonical $\mbbP$-name for a $\mbbP$-generic filter.
If $H$ is a $\mbb{Q}$-generic filter over $V$,
then a filter $G \subseteq \mbb{P}$ generated by $f[H]$
is a $\mbb{P}$-generic filter over $V$, $H$ is a $( \mbb{Q} / G )^f$-generic filter over $V[G]$,
and $V[H] = V[G][H]$.

In this paper, a \emph{complete Boolean algebra} means a poset obtained by getting rid of the smallest element $0$ from a complete Boolean algebra in the usual sense.

We use the following standard fact. We give a proof since we could not find any reference.

\begin{lemma} \label{lem:projection}
Let $\mbb{P}$ be a complete Boolean algebra and $\mbb{Q}$ be a poset.
Suppose $H$ is a $\mbb{Q}$-generic filter over $V$, and in $V[H]$ there is a $\mbb{P}$-generic filter $G$ over $V$.
\begin{aenumerate}
\item For some $p^* \in G$ and $q^* \in H$, there is a projection $f \in V$
from $\mbb{Q} \restrict q^*$ to $\mbb{P} \restrict  p^*$ such that
$G \restrict p^*$ is a filter generated by $f[ H \restrict q^* ]$.
If $1_\mbbQ$ forces the existence of a $\mbbP$-generic filter
over $V$, then we can let $q^* = 1_\mbbQ$.
\item Suppose also that $H \in V[G]$. Then for some $p^{**} \in G$ and $q^{**} \in H$, there is a dense embedding $d \in V$ from
$\mbbQ \restrict q^{**}$ to $\mbbP \restrict p^{**}$
such that $G \restrict p^{**}$ is a filter
generated by $d[ H \restrict q^{**} ]$.
\end{aenumerate}
\end{lemma}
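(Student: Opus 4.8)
The plan is to run the standard Boolean-value construction, using that a $\mbbP$-generic filter over a complete Boolean algebra is an ultrafilter on $\mbbP$. For \textbf{(1)}, fix a $\mbbQ$-name $\dot G$ with $\dot G^H = G$; since $V[H]$ thinks $G$ is $\mbbP$-generic over $V$, fix $q^* \in H$ forcing ``$\dot G$ is a $\mbbP$-generic filter over $V$''. (If $1_\mbbQ$ forces the existence of a $\mbbP$-generic filter over $V$, let $A$ be a maximal antichain of conditions incompatible with $q^*$ and let $\dot K$ be a name with $1_\mbbQ \Vdash$ ``$\dot K$ is $\mbbP$-generic over $V$''; redefine $\dot G$, by the maximality principle, to equal $\dot K$ below every member of $A$. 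Then $1_\mbbQ$ forces ``$\dot G$ is $\mbbP$-generic over $V$'' and $\dot G^H$ is unchanged, so we may take $q^* = 1_\mbbQ$.) For $q \leq_\mbbQ q^*$ put
\[
  f(q) \ :=\ \bigwedge \{\, b \in \mbbP \mid q \Vdash_\mbbQ \check b \in \dot G \,\},
\]
the infimum in the complete Boolean algebra $\mbbP \cup \{0\}$. The crux is that for every $q \leq q^*$: $f(q) \neq 0$, $\ q \Vdash \check{f(q)} \in \dot G$, and $\ q \Vdash \check b \in \dot G$ iff $f(q) \leq b$ (for $b \in \mbbP$). Indeed, $I_q := \{\, c \in \mbbP \cup \{0\} \mid q \Vdash \check c \notin \dot G \,\}$ is an ideal (since $q$ forces $\dot G$ to be an ultrafilter on $\mbbP$), and $\bigvee I_q \in I_q$: otherwise some $q' \leq q$ forces $\bigvee I_q \in \dot G$, but $I_q \setminus \{0\}$ is predense below $\bigvee I_q$, so $\dot G$, being forced generic, would be forced to contain a nonzero member of $I_q$, contrary to $q \Vdash \check c \notin \dot G$ for all $c \in I_q$. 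Thus $I_q$ is the principal ideal with top $\bigvee I_q \neq 1_\mbbP$, and so $\{\, b \in \mbbP \mid q \Vdash \check b \in \dot G \,\} = \{\, b \in \mbbP \mid b \geq \neg\bigvee I_q \,\}$ (a filter on $\mbbP$ contains $1_\mbbP$, omits $0$, and has no two incompatible members), whence $f(q) = \neg\bigvee I_q$ and the three assertions follow. Granting this, $f$ is order preserving (smaller conditions force more sets into $\dot G$), so $p^* := f(q^*) \in G$ and $f$ maps $\mbbQ \restrict q^*$ into $\mbbP \restrict p^*$ with $f(q^*) = p^*$; clause (iii) of ``projection'' holds because $p \leq_\mbbP f(q)$ with no $q' \leq q$ forcing $\check p \in \dot G$ would give $p \in I_q$, hence $p \leq \neg f(q)$, hence $p = 0$; and $G \restrict p^*$ is generated by $f[H \restrict q^*]$ by two routine inclusions (for $\supseteq$ use $q \Vdash \check{f(q)} \in \dot G$ and upward closure; for $\subseteq$, given $b \in G \restrict p^*$ take $q \in H \restrict q^*$ with $q \Vdash \check b \in \dot G$, so $f(q) \leq b$).

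For \textbf{(2)}, now $G \in V[H]$ and $H \in V[G]$, so $V[G] = V[H]$; put $G' := G \restrict p^*$, so $V[G'] = V[H]$ and $H \restrict q^* \in V[G']$. Keep $f$ from (1). By the quotient-forcing facts of \S\ref{sec:preliminaries} applied to $f$, the poset $\mbbS := (\mbbQ \restrict q^*/G')^f$ (i.e.\ $\mbbQ \restrict q^*$ restricted to $f^{-1}[G']$), computed in $V[G']$, has $H \restrict q^*$ as a generic filter over $V[G']$ — and that filter lies in $V[G']$. But a poset in which every condition has two incompatible extensions admits no generic filter lying in the ground model (if $U$ were one, $\mbbS \setminus U$ is a ground-model dense set disjoint from $U$); applying this below conditions, a density argument over $V[G']$ on
\[
  \mathcal{D} \ :=\ \{\, a \in \mbbS \mid \text{every two elements of } \mbbS\restrict a \text{ are compatible in } \mbbS \,\} \ \cup\ \{\, r \in \mbbS \mid \text{no such } a \text{ lies below } r \,\}
\]
shows that $H \restrict q^*$ contains some $a$ of the first kind. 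Since ``every two elements of $\mbbS\restrict a$ are compatible in $\mbbS$'' is, in $V[G']$, expressible over $\mbbP \restrict p^*$ with parameters $a$, $f$, $\mbbQ\restrict q^*$ from $V$, some $\bar p \in G'$ forces it; unravelling, this says: for all $r_0, r_1 \leq_\mbbQ a$, if $f(r_0) \wedge f(r_1) \wedge \bar p \neq 0$ then $r_0$ and $r_1$ have a common lower bound in $\mbbQ$. Finally pick $q^{**} \in H$ with $q^{**} \leq a$ and $q^{**} \Vdash \check{\bar p} \in \dot G$ (possible since $\bar p \in G \restrict p^*$: intersect, inside $H$, a condition of $H \restrict q^*$ forcing this with $a$), so $f(q^{**}) \leq \bar p$, and set $p^{**} := f(q^{**}) \in G$. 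Then $d := f \restrict (\mbbQ \restrict q^{**}) \colon \mbbQ \restrict q^{**} \to \mbbP \restrict p^{**}$ is order preserving with $d(q^{**}) = p^{**}$, has dense image (a restriction of a projection), and preserves incompatibility: if $r_0, r_1 \leq q^{**}$ are incompatible in $\mbbQ$ then $f(r_0) \wedge f(r_1) \wedge \bar p = 0$ by the above, and since $f(r_i) \leq f(q^{**}) = p^{**} \leq \bar p$ this gives $f(r_0) \wedge f(r_1) = 0$. Hence $d$ is a dense embedding, and $G \restrict p^{**}$ is generated by $d[H \restrict q^{**}]$ just as in (1).

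I expect the main obstacle to lie in (2): finding the ``good'' condition $a$, and then correctly converting ``$\bar p$ forces $a$ good'' — a statement about the generic extension — into the purely ground-model incompatibility-preservation property of $f$ below $a$, all while $\mbbQ$ (hence $\mbbS$) is merely a pre-order rather than separative (which is why the density arguments above are phrased to survive that generality). In (1) the only delicate point is the predensity argument that $I_q$ is a principal ideal.
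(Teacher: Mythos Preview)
Your argument is correct. Part (1) is essentially the paper's proof: the paper also sets $f(q) = \inf X_q$ where $X_q = \{\, p \mid q \Vdash \check p \in \dot G \,\}$, and proves $\inf X_q \in X_q$ via the dense set $D_0 \cup D_1$ of lower bounds of $X_q$ together with elements incompatible with some member of $X_q$; your principal-ideal argument on $I_q$ is the Boolean complement of this and yields the same conclusion.

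Part (2) rests on the same idea as the paper --- the quotient $(\mbbQ\restrict q^*/G')^f$ has its generic $H\restrict q^*$ in the ground model $V[G']$, hence is trivial below some condition --- but the execution differs. The paper observes directly that the complement of $H\restrict q^*$ cannot be dense, picks $q^{**} \in H$ with $(\mbbQ^*/G^*)^f \restrict q^{**} \subseteq H^*$, then extends $q^{**}$ in $H$ to force the $\mbbQ$-side statement ``$q \leq q^{**}$ and $f(q) \in \dot G$ imply $q \in \dot H$''; incompatibility preservation is then checked by producing, from compatible $d(q_0), d(q_1)$, a $q \leq q^{**}$ forcing $q_0, q_1 \in \dot H$. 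You instead find $a \in H$ below which the quotient has all elements pairwise compatible, pass to a $\bar p \in G'$ forcing this on the $\mbbP$-side, and unravel to the ground-model Boolean criterion ``$f(r_0) \wedge f(r_1) \wedge \bar p \neq 0 \Rightarrow r_0, r_1$ compatible in $\mbbQ$'', which gives incompatibility preservation immediately once $p^{**} \leq \bar p$. Your route avoids naming $\dot H$ and yields a clean Boolean formulation; the paper's route is a bit shorter since it skips the density argument on $\mathcal{D}$ (``complement not dense'' does the work in one line). Both are sound.
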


\begin{proof}
(1) Let $\dot{G}$ be a $\mbb{Q}$-name of $G$, and
take $q^* \in H$ which forces that $\dot{G}$ is a $\mbb{P}$-generic filter over $V$.
If $1_\mbbQ$ forces the existence of $\mbbP$-generic filter over $V$,
then take $\dot{G}$ so that $1_\mbbQ$ forces $\dot{G}$ to be
a $\mbbP$-generic filter over $V$, and let $q^* := 1_\mbbQ$.
In $V$, for each $q \in \mbb{Q} \restrict q^*$, let
\[
X_q := \{ p \in \mbb{P} \mid q \Vdash_\mbb{P} \lchon\, p \in \dot{G} \,\rchon \} \, .
\]

\begin{claim} \label{claim:X_q}
In $V$, for any $q \in \mbb{Q} \restrict q^*$,
$X_q$ has a lower bound in $\mbbP$, and $\inf X_q \in X_q$.
\end{claim}

\noindent
\emph{Proof of Claim}.
We work in $V$. Take an arbitrary $q \in \mbb{Q} \restrict q^*$.

Let $D_0$ be the set of all lower bounds of $X_q$ in $\mbb{P}$
and $D_1$ be the set of all $p' \in \mbb{P}$ which is incompatible
with some $p \in X_q$.
Then $D := D_0 \cup D_1$ is dense in $\mbb{P}$.
So $q \Vdash \lchon\, \dot{G} \cap D \neq \emptyset \,\rchon$.
But, $q \Vdash \lchon\, \dot{G} \cap D_1 = \emptyset \,\rchon$ since $q$ forces that $\dot{G}$ is a filter.
So $q \Vdash \lchon\, \dot{G} \cap D_0 \neq \emptyset \,\rchon$.
Thus $D_0 \neq \emptyset$, and $q$ forces that $\inf X_q = \sup D_0 \in \dot{G}$, that is,~$\inf X_q \in X_q$.
\hfill $\square$(Claim \ref{claim:X_q})

\vskip.5\baselineskip

For each $q \in \mbb{Q} \restrict q^*$ let $f(q) := \inf X_q$, and let $p^* := f( q^* )$.
Since $q^* \in H$, we have $X_{q^*} \subseteq \dot{G}^H = G$,
and so $p^* \in G$. It is also easy to see that
$G \restrict p^*$ is generated by $f[ H \restrict q^* ]$.
We prove that $f$ is a projection from $\mbb{Q} \restrict q^*$ to $\mbb{P} \restrict p^*$.
Clearly $f$ satisfies the conditions (i) and (ii) of the definition of a projection. We check (iii).

Suppose $q \in \mbb{Q} \restrict q^*$, $p \in \mbb{P} \restrict p^*$ and $p \leq_\mbbP f(q)$.
For a contradiction, assume there is no $q' \leq_\mbbQ q$ with $f(q') \leq_\mbbP p$.
Then there is no $q' \leq_\mbbQ q$ which forces that $p \in \dot{G}$.
Then $q \Vdash \lchon\, p \notin \dot{G} \,\rchon$.
Moreover, $q \Vdash \lchon\, f(q) \in \dot{G} \,\rchon$ by Claim \ref{claim:X_q}.
So $q \Vdash \lchon\, f(q) - p \in \dot{G} \,\rchon$, that is, $f(q) - p \in X_q$.
But $f(q) - p <_\mbbP f(q) = \inf X_q$. This contradicts the definition
of $X_q$.

\vskip.5\baselineskip

\noindent
(2) Let $p^*$, $q^*$ and $f$ be those defined in the proof of (1).
Moreover, let $\mbbP^*$, $G^*$, $\mbbQ^*$ and $H^*$ be $\mbbP \restrict p^*$,
$G \restrict p^*$, $\mbbQ \restrict q^*$ and $H \restrict q^*$, respectively.
Then $G^*$ and $H^*$ are $\mbbP^*$-generic and $\mbbQ^*$-generic over $V$, respectively.
Moreover, $f : \mbbQ^* \to \mbbP^*$ is a projection, and
$G^*$ is generated by $f[ H^* ]$.
So $H^*$ is $( \mbbQ^* / G^* )^f$-generic over $V[ G^* ]$.

Since $H \in V[G] = V[ G^* ]$, we have $H^* \in V[ G^* ]$.
Then the complement of $H^*$ is not dense in $( \mbbQ^* / G^* )^f$.
So there is $q^{**} \in H^*$ such that $( \mbbQ^* / G^* )^f \restrict q^{**} \subseteq H^*$.
Since $( \mbbQ^* / G^* )^f = f^{-1} [ G^* ]$, we have that if $q \leq_\mbbQ q^{**}$, and $f(q) \in G$,
then $q \in H$.
By extending $q^{**}$ if necessary, we may assume that $q^{**}$ forces this.
That is, in $V$, $q^{**}$ forces that if $q \leq_\mbbQ q^{**}$, and $f(q) \in \dot{G}$,
then $q \in \dot{H}$. Here $\dot{H}$ is the canonical $\mbbQ$-name for a $\mbbQ$-generic filter.
Let $p^{**} := f( q^{**} )$ and $d$ be the restriction of $f$
to $\mbbQ \restrict q^{**}$.
Then $G \restrict p^{**}$ is generated by $d[ H \restrict q^{**} ]$.

Working in $V$, we show that
$d : \mbbQ \restrict q^{**} \to \mbbP \restrict p^{**}$
is a dense embedding. Note that $d$ is a projection,
since $f$ is a projection.
So it suffices to show that for any $q_0 , q_1 \in \mbbQ \restrict q^{**}$
if $d( q_0 )$ and $d( q_1 )$ are compatible in $\mbbP$,
then $q_0$ and $q_1$ are compatible in $\mbbQ$.

Suppose $q_0 , q_1 \in \mbbQ \restrict q^{**}$,
and $d( q_0 )$ and $d( q_1 )$ are compatible in $\mbbP$.
Since $d$ is a projection, we can take $q \leq q^{**}$ with
$d(q) \leq_\mbbP d( q_0 ) , d( q_1 )$.
Then, since $q$ forces that $d(q) = f(q) \in \dot{G}$,
it also forces that $d( q_0 ) , d( q_1 ) \in \dot{G}$.
Then, since $q \leq_\mbbQ q^{**}$, $q$ forces that $q_0 , q_1 \in \dot{H}$.
So $q_0$ and $q_1$ are compatible in $\mbbQ$.
\end{proof}

For a poset $\mbb{P}$, $\mbbB ( \mbb{P} )$ is the complete Boolean algebra
consisting of all non-empty regular open subsets of $\mbb{P}$
ordered by inclusion.
Note that there is a natural dense embedding $d : \mbb{P} \to \mbbB ( \mbb{P} )$,
that is, $d(p)$ is the smallest regular open subset of $\mbb{P}$ containing $p$.
If $\mbbP$ is separative then $d$ is injective, so we regard $\mbbP$ as a suborder of $\mbbB(\mbbP)$.
We say that posets $\mbb{P}$ and $\mbb{Q}$ are \emph{forcing equivalent} if
$\mbbB ( \mbb{P} )$ and $\mbbB ( \mbb{Q} )$ are isomorphic.

Suppose that $\mbbB$ is a complete Boolean algebra and $\dot{\sigma}$ is a $\mbbB$-name for a subset of a set $x$ in $V$. Let $\mbbB(\dot{\sigma})$ denote the complete Boolean subalgebra of $\mbbB$ that is generated by the Boolean values $\bool{ y\in \dot{\sigma} }_\mbbB$ for $y \in x$.

\vskip.5\baselineskip



For a regular cardinal $\alpha$ and an ordinal $\beta > \alpha$, $\coll(\alpha, \beta)$ is the poset of all partial functions $p$ from $\alpha$ to $\beta$ such that $|p|<\alpha$, ordered by reverse inclusion.
$\coll(\alpha, {<}\beta)$ is the poset of all partial functions $p$ from $\alpha \times \beta$ to $\beta$ such that $|p| < \alpha$ and $p(\gamma, \delta)\in \delta$ for all $(\gamma, \delta)\in \dom(p)$, ordered by reverse inclusion.
For $\alpha<\gamma<\beta$,
\begin{center}
  $\coll(\alpha, [\gamma, \beta))=
  \{p \in \coll ( \alpha, {<}\beta ) \mid \dom (p) \subseteq \alpha \times [ \gamma, \beta )\}$.
\end{center}
For $G$ a $\coll(\alpha, {<}\beta)$-generic filter over $V$ and $\alpha < \gamma < \beta $, $G_\gamma$ and $G_{[\gamma, \beta)}$ denote $G \cap \coll(\alpha, {<} \gamma)$ and $G \cap \coll(\alpha, [\gamma, \beta))$, respectively.

  For a regular cardinal $\mu$, $\add(\mu, 1)$ is the poset $\mu^{< \mu}$ ordered by reverse inclusion.
  For an ordinal $\gamma$, let $\add(\mu, \gamma)$ be the ${<}\mu$-support product of $\gamma$-many copies of $\add ( \mu , 1 )$.

 The following is a standard fact.

 \begin{fact}\label{fact:cohen_equiv}
  If $\mu$ is a regular cardinal with $\mu^{<\mu}=\mu$, and $\mbb{Q}$ is a nonatomic $\mu$-closed forcing of size $\mu$, then $\mbb{Q}$
  has a dense subset that is isomorphic to a dense subset in $\add(\mu, 1)$.
 \end{fact}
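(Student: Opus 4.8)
The plan is to produce the required dense subset of $\mbbQ$ as the image of a dense subtree of $\add(\mu,1)=\mu^{<\mu}$ under a level‑by‑level labelling, following the standard $\mu$‑analogue of the fact that every nonatomic countable poset is forcing equivalent to Cohen forcing. We may assume $\mbbQ$ is separative (otherwise pass to its separative quotient, which has the same completion), so $\mbbQ$ is a dense subposet of $\mbbB:=\mbbB(\mbbQ)$; since $\mbbQ$ is $\mu$‑closed, $\mbbB$ is ${<}\mu$‑distributive. I would build $e\colon\mu^{<\mu}\to\mbbB$ with: $e(\emptyset)=1_\mbbQ$; $e$ strictly $\subseteq$‑monotone decreasing into $(\mbbB,\leq)$ and sending $\subseteq$‑incomparable nodes to incompatible elements (so $e$ is injective); $e(s)\in\mbbQ$ whenever $\dom(s)$ is $0$ or a successor ordinal; and such that $D^*:=\{\,e(s)\mid\dom(s)\text{ is }0\text{ or a successor}\,\}$ is dense in $\mbbQ$. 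Granting all this, put $T_0:=\{\,s\in\mu^{<\mu}\mid\dom(s)\text{ is }0\text{ or a successor ordinal}\,\}$; then $T_0$ is dense in $\add(\mu,1)$ (for $s$ of limit length, $s^\frown 0\in T_0$ and $s^\frown 0\leq_{\add}s$), and $e\restriction T_0$ is an isomorphism of posets from $T_0$ onto $D^*$, which is what we want. Note that $\mu^{<\mu}=\mu$ is needed here only to ensure $|T_0|=\mu=|\mbbQ|$, removing any cardinality obstruction.

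For the construction I would fix an enumeration $\mbbQ=\{\,q_\alpha\mid\alpha<\mu\,\}$, set $e(\emptyset)=1_\mbbQ$, and proceed by recursion on levels. At a successor level $\alpha+1$: for each $s$ with $\dom(s)=\alpha$, let $\{\,e(s^\frown\xi)\mid\xi<\mu\,\}$ be a maximal antichain below $e(s)$, of size exactly $\mu$, consisting of conditions in $\mbbQ$ strictly below $e(s)$, each deciding $q_\alpha$ (lying below $q_\alpha$ or below $\neg q_\alpha$ in $\mbbB$). Such an antichain exists: split $e(s)$ by $q_\alpha$ in $\mbbB$, choose maximal antichains of conditions in $\mbbQ$ below each nonzero piece (dense there since $\mbbQ$ is dense in $\mbbB$), take their union, and enlarge to size $\mu$ by replacing one member $a$ by a maximal antichain of size $\mu$ below $a$ in $\mbbQ$ — which exists because, using nonatomicity of $\mbbQ$ and $\mu$‑closure at limits, one builds a $\leq_\mbbQ$‑decreasing sequence $\langle a_\zeta\mid\zeta<\mu\rangle$ below $a$ together with, at each $\zeta$, a condition below $a_\zeta$ incompatible with $a_{\zeta+1}$, and these latter conditions form an antichain of size $\mu$. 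At a limit level $\lambda<\mu$: for each branch $b\in\mu^\lambda$, put $e(b):=\bigwedge_{\alpha<\lambda}e(b\restriction\alpha)$ in $\mbbB$; this is nonzero because the successor‑level values along $b$ form a $\leq_\mbbQ$‑decreasing sequence in $\mbbQ$ of length $<\mu$ cofinal in the branch, hence with a lower bound in $\mbbQ$ by $\mu$‑closure. (Strictness of the monotonicity at limits: $e(b)=e(b\restriction\alpha)$ would give $e(b\restriction\alpha)\leq e(b\restriction(\alpha+1))<e(b\restriction\alpha)$.)

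The heart of the matter is density of $D^*$, which I would extract from the claim, proved by induction on $\alpha$, that each level $L_\alpha:=\{\,e(s)\mid s\in\mu^\alpha\,\}$ is predense in $\mbbB$. At successor stages this is automatic, since refining a predense set by maximal antichains below its members again gives a predense set. At a limit stage $\lambda$ it is where ${<}\mu$‑distributivity is used: the sets $U_\alpha:=\{\,c\in\mbbB\mid c\leq e(s)\text{ for some }s\in\mu^\alpha\,\}$ ($\alpha<\lambda$) are dense open, so $\bigcap_{\alpha<\lambda}U_\alpha$ is dense, and every $c$ in it lies below $e(b\restriction\alpha)$ for all $\alpha<\lambda$ with the nodes $b\restriction\alpha$ forming a coherent branch $b$, hence below $e(b)$; thus $L_\lambda$ is predense. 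Given $q=q_\alpha$, every level‑$(\alpha+1)$ node decides $q_\alpha$ by construction and $L_{\alpha+1}$ is predense, so some $e(s)$ with $\dom(s)=\alpha+1$ is compatible with $q_\alpha$ and hence below it; as $\dom(s)$ is a successor, $e(s)\in D^*$. This gives the density of $D^*$ in $\mbbQ$.

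The step I expect to be the main obstacle is exactly keeping the levels predense across limit stages. A single lower bound in $\mbbQ$ of a $\subseteq$‑decreasing branch need not be compatible with everything that is compatible with all initial segments of that branch, so a construction staying inside $\mbbQ$ cannot control limit levels by hand. Passing to the completion $\mbbB$ repairs this: there the honest meet of a branch exists and serves as the limit‑level label, and the ${<}\mu$‑distributivity of $\mbbB$ — which is a consequence of $\mu$‑closure of $\mbbQ$ — is precisely what makes these meets collectively predense. The remaining ingredients (large maximal antichains deciding a prescribed condition, strictness and injectivity of $e$, the dovetailing that handles every $q_\alpha$) are routine; and if $\mbbQ$ fails to be separative the reduction to the separative quotient is standard, so we do not dwell on it.
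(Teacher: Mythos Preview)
The paper does not prove this statement at all; it is recorded as ``a standard fact'' and left without argument. Your write-up is a correct reconstruction of the standard proof --- the $\mu$-analogue of the folklore argument that every nonatomic separative countable poset is forcing equivalent to Cohen forcing --- so there is nothing to compare against.

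One small caveat worth tightening: the opening reduction ``pass to the separative quotient'' is not as innocent as you suggest, because the conclusion asks for a dense subset of $\mbbQ$ itself, and a dense subset of $\mbbQ/\!\sim$ does not obviously pull back to a dense subset of $\mbbQ$ with the same induced order. The clean fix is to run your construction without that reduction: keep the limit-level values in $\mbbB(\mbbQ)$ as you do, but at each successor step choose the antichain members in $\mbbQ$ so that they are $\leq_\mbbQ$ below every earlier successor-level ancestor (this is possible because along the branch those ancestors form a $\leq_\mbbQ$-decreasing sequence of length ${<}\mu$, and by $\mu$-closure you can first drop to a $\mbbQ$-lower bound of that sequence before splitting). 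With this adjustment the map $e\restriction T_0$ lands in $\mbbQ$ and is order-preserving and order-reflecting for $\leq_\mbbQ$ on the nose, and the rest of your argument (predensity of levels via ${<}\mu$-distributivity of $\mbbB(\mbbQ)$, the bookkeeping that forces density of $D^*$) goes through unchanged.
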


\vskip.5\baselineskip

Next, we recall the definition and some basic properties of the Solovay model which was introduced by Solovay \cite{solovay1970model} as $\mu = \omega$.

\begin{definition}
  Assume $(\ast)_{\mu,\kappa}$.
  We let
  \[ V(({\mu^\mu})^{V[G]}) := \mathrm{HOD}^{V[G]}_{V \,\cup\, {(\mu^\mu)}^{V[G]}}  \]
  that is, $V(({\mu^\mu})^{V[G]})$ is the collection of sets which are hereditarily ordinal definable in $V[G]$ using parameters from $V$ and $(\mu^\mu)^{V[G]}$.
  We call $\VomegaVG$ the \textit{Solovay model} for $\kappa$.
\end{definition}

Assume $(\ast)_{\omega, \kappa}$.
Solovay \cite{solovay1970model} proved that the Solovay model $\VomegaVG$ satisfies $\ZF + \DC$, and in $\VomegaVG$ all sets of reals have regularity properties such as the Baire property, Lebesgue measurability and the perfect set property. Here we also recall some technical details about the Solovay model. 

Suppose $X \in \VomegaVG$. Then there are a formula $\varphi$, $v \in V$ and $r \in \mbbR^{V[G]}$ such that
\[
x \in X \ \Leftrightarrow \ V[G] \models \varphi (v,r,x) \, .
\]
We may assume that $\kappa$ is definable from $v$.

The following fact is one of the keys in the analysis of the Solovay model.
We later consider the generalized statement (Proposition \ref{prop:coll_unctble}). 

\begin{fact}\label{fact:factor}
  Assume $(\ast)_{\omega, \kappa}$.
  Suppose $x \in \mcalH_\kappa^{V[G]}$.
  Then, in $V[G]$, there is a $\coll(\omega, {<}\kappa)$-generic filter $G'$ over $V[x]$ such that $V[G] = V[x][G']$.
\end{fact}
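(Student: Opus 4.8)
The plan is to use a standard factoring argument for the Lévy collapse. First I would observe that since $x \in \mcalH_\kappa^{V[G]}$ and $\coll(\omega, {<}\kappa)$ has the $\kappa$-chain condition, there is some $\lambda < \kappa$ such that $x \in V[G_\lambda]$; indeed, a nice name for $x$ (coding a subset of a set in $\mcalH_\kappa^V$, using that $\kappa$ is inaccessible) is an element of $\mcalH_\kappa^V$ and hence, by the $\kappa$-c.c., is decided by an antichain lying inside $\coll(\omega, {<}\lambda)$ for some $\lambda < \kappa$. So $x \in V[G_\lambda]$ where $G_\lambda = G \cap \coll(\omega, {<}\lambda)$ is $\coll(\omega, {<}\lambda)$-generic over $V$.

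Next I would write $\coll(\omega, {<}\kappa) \cong \coll(\omega, {<}\lambda) \times \coll(\omega, [\lambda, \kappa))$, so that $G$ factors as $G_\lambda \times G_{[\lambda,\kappa)}$ with $G_{[\lambda,\kappa)}$ being $\coll(\omega, [\lambda,\kappa))$-generic over $V[G_\lambda]$, and $V[G] = V[G_\lambda][G_{[\lambda,\kappa)}]$. Since $\lambda < \kappa$ and $\kappa$ is inaccessible, in $V[G_\lambda]$ we have $2^{<\lambda} \cdot |\lambda| < \kappa$ and every cardinal in the interval $(\omega, \kappa)$ of $V$ below $\lambda$ has been collapsed, so $\coll(\omega, [\lambda, \kappa))$ is (in $V[G_\lambda]$, and in fact already in $V[x] \subseteq V[G_\lambda]$, since $\lambda$ can be chosen definably from $x$ and $\coll(\omega,[\lambda,\kappa))$ has size $\kappa$ which is still inaccessible in $V[x]$) a forcing that is forcing equivalent to $\coll(\omega, {<}\kappa)$: it is ${<}\kappa$-strategically-closed-free in the relevant sense — more precisely, one uses the classical absorption lemma that $\coll(\omega, [\lambda, \kappa))$ collapses everything below $\kappa$ to $\omega$ and has a dense subset isomorphic to a dense subset of $\coll(\omega, {<}\kappa)$ (this is the standard homogeneity/absorption property of the Lévy collapse, valid because $\kappa$ remains inaccessible over $V[x]$). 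Hence there is a $\coll(\omega, {<}\kappa)$-generic filter $G'$ over $V[x]$, lying in $V[G]$, with $V[x][G'] = V[x][G_{[\lambda,\kappa)}] = V[G_\lambda][G_{[\lambda,\kappa)}] = V[G]$, where the middle equality uses $x \in V[G_\lambda]$ together with the fact that passing from $V[x]$ to $V[G_\lambda]$ is itself a further collapse that can be absorbed into the tail.

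The main obstacle is the bookkeeping around which model the forcing equivalence is witnessed in: one wants $G'$ to be generic over $V[x]$, not merely over $V[G_\lambda]$, so one must check that $\coll(\omega, [\lambda,\kappa))$ is forcing equivalent to $\coll(\omega, {<}\kappa)$ already as computed in $V[x]$ (equivalently, that $V[G_\lambda]$ is itself obtained from $V[x]$ by a collapse absorbable into the tail), which relies on $\kappa$ still being inaccessible in $V[x]$ — this holds since $x \in \mcalH_\kappa^{V[G]}$ and $\coll(\omega,{<}\kappa)$ is $\kappa$-c.c., so $\kappa$ stays inaccessible in $V[G]$ and a fortiori in the inner model $V[x]$. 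With that in hand, the classical Lévy collapse absorption theorem (see, e.g., the proof of Solovay's theorem) gives the desired $G'$.
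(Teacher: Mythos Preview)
Your approach is the standard one and is essentially correct; the paper in fact states Fact~\ref{fact:factor} as a classical result without proof, so there is nothing to compare against directly.

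One presentational issue: the displayed chain $V[x][G'] = V[x][G_{[\lambda,\kappa)}] = V[G_\lambda][G_{[\lambda,\kappa)}]$ is not literally true as written, since in general $V[x][G_{[\lambda,\kappa)}] \subsetneq V[G_\lambda][G_{[\lambda,\kappa)}]$. What you want (and what your parenthetical remark indicates you have in mind) is to first realize $V[G_\lambda]$ as $V[x][H]$ for some $\mathbb{Q}$-generic $H$ with $\mathbb{Q} \in V[x]$ of size ${<}\,\kappa$, and then translate the pair $(H, G_{[\lambda,\kappa)})$ through a forcing equivalence $\mathbb{Q} \times \coll(\omega,[\lambda,\kappa)) \sim \coll(\omega,{<}\kappa)$ computed in $V[x]$ to obtain $G'$. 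With that correction the argument goes through.

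It may interest you that the paper later proves a genuine generalization (Proposition~\ref{prop:coll_unctble}, the case of $\coll(\mu,{<}\kappa)$ for uncountable~$\mu$), and there the route is rather different: the quotient taking $V[x]$ to $V[G_{\nu+1}]$ is analyzed directly and shown to be strongly $\mu$-strategically closed via a game-theoretic argument (Lemmas~\ref{lem:strong_str_closed_quotient} and~\ref{lem:4_models}), after which a characterization of $\coll(\mu,\nu)$ due to Eskew (Fact~\ref{fact:coll_equivalent}) identifies the quotient. For $\mu=\omega$ that machinery collapses to the classical absorption argument you sketched, so in the countable case the two approaches coincide; the paper's extra work is only needed because a quotient of a $\mu$-closed forcing need not be $\mu$-closed when $\mu>\omega$.
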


Let $\psi (v,r,x)$ be the formula stating that $\Vdash_{\coll(\omega, {<}\kappa)} \varphi ( \check{v} , \check{r} , \check{x} )$.
If $\mbbP$ is a poset in $\mcalH_\kappa$, and $h$ is a $\mbbP$-generic filter over $V$ with $h \in V[G]$ and $r \in V[h]$, then for any $x \in V[h]$ we have
\[
x \in X \ \Leftrightarrow \ V[h] \models \psi (v,r,x) \, .
\]
This is because of Fact \ref{fact:factor} and the homogeneity of $\coll(\omega, {<}\kappa)$.

Also, the following is a standard fact about $\mathrm{HOD}$.
\begin{fact}\label{fact:surj}
  Assume $(\ast)_{\mu, \kappa}$.
  Suppose $X \in V((\mu^\mu)^{V[G]})$ is a non-empty set.
  Then for some ordinal $\lambda$ there is a surjection $f \in V((\mu^\mu)^{V[G]})$ from $V_\lambda \times (\mu^\mu)^{V[G]}$ to $X$, where $V_\lambda$ denotes the one defined in $V$.
\end{fact}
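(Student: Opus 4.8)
The plan is the standard argument for models of the form ``$V(A)$'', adapted to $N := V((\mu^\mu)^{V[G]}) = \mathrm{HOD}^{V[G]}_{V \cup (\mu^\mu)^{V[G]}}$. Fix a non-empty $X \in N$; since $N$ is transitive, $X \subseteq N$. What I want is a single ordinal $\lambda$ such that every $x \in X$ is pinned down over $V[G]$ by a first-order formula whose parameters come from $(V_\lambda)^V$, from the ordinals below $\lambda$, and from $(\mu^\mu)^{V[G]}$; the surjection $f$ is then read off the ``definition diagram''. To make the diagram clean, fix an enumeration $(\varphi_n)_{n<\omega}$ of $\in$-formulas and a G\"{o}del pairing bijection $\Gamma \in V$ of $\mu\times\mu$ onto $\mu$ (note $\Gamma \in N$); using that ordinals below $\lambda$ lie in $(V_\lambda)^V$, that $(V_\lambda)^V$ is closed under finite tuples, and that $\Gamma$ reduces finite tuples from $\mu^\mu$ to single elements, the target becomes: for every $x \in X$ there are $n<\omega$, $p \in (V_\lambda)^V$ and $g \in (\mu^\mu)^{V[G]}$ with $x$ the unique $z \in (V_\lambda)^{V[G]}$ such that $((V_\lambda)^{V[G]},{\in}) \models \varphi_n(z,p,g)$.

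To produce $\lambda$ I would argue inside $V[G]$ by reflection and replacement. For a limit ordinal $\gamma > \mu$ let $D_\gamma$ be the set of $x \in (V_\gamma)^{V[G]}$ that are definable in $((V_\gamma)^{V[G]},{\in})$ from parameters in $\gamma \cup (V_\gamma)^V \cup (\mu^\mu)^{V[G]}$. These are increasing in $\gamma$ (relativize a definition inside $(V_\gamma)^{V[G]}$ to one inside $(V_{\gamma'})^{V[G]}$, using $\gamma$ as an extra ordinal parameter), and, by the L\'{e}vy reflection theorem applied in $V[G]$, $\bigcup_\gamma D_\gamma$ is precisely the class of sets ordinal-definable over $V[G]$ from parameters in $V \cup (\mu^\mu)^{V[G]}$; hence $N = \{x : \mathrm{tc}(\{x\}) \subseteq \bigcup_\gamma D_\gamma\}$. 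In particular each $x \in X$ lies in some $D_\gamma$; let $h(x)$ be the least such $\gamma$, and let $\lambda$ be a limit ordinal above $\sup_{x\in X} h(x)$, above $\mu$, and above the $V[G]$-rank of $X$, so that $X \subseteq D_\lambda$. The one non-routine point — and what I expect to be the real obstacle — is that $\sup_{x\in X}h(x)$ is an ordinal at all. This is where one uses Replacement in $V[G]$ applied to $h\restrict X$, and for that one needs $h$ (equivalently, the assignment $\gamma \mapsto (V_\gamma)^V$) to be definable over $V[G]$; i.e.\ it rests on the ground model $V$ being definable in its set-generic extension $V[G]$. That fact is immediate for the L\'{e}vy collapse and holds in general by the theorem of Laver and Woodin, so in the write-up I would simply invoke it — but it is the only ingredient that is not bookkeeping.

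With $\lambda$ in hand, fix $x_0 \in X$ and define $f : (V_\lambda)^V \times (\mu^\mu)^{V[G]} \to X$ as follows: decode $p$ as a pair $(n,p')$ with $n<\omega$; if there is a unique $z$ in $(V_\lambda)^{V[G]}$ with $((V_\lambda)^{V[G]},{\in}) \models \varphi_n(z,p',g)$ and moreover $z \in X$, set $f(p,g) := z$, and otherwise set $f(p,g) := x_0$. By the first paragraph $f$ is onto $X$. And $f \in N$ for the two customary reasons: $\mathrm{tc}(\{f\}) \subseteq N$, since $(V_\lambda)^V \subseteq V \subseteq N$, $(\mu^\mu)^{V[G]} \subseteq N$ and $X \subseteq N$; and $f$ is ordinal-definable over $V[G]$ from parameters in $V \cup (\mu^\mu)^{V[G]}$, namely from $\lambda$, the set $(V_\lambda)^V$, $\Gamma$, the parameters from $V \cup (\mu^\mu)^{V[G]}$ witnessing $X \in N$ (one unfolds a definition of $X$ rather than using $X$ itself as a parameter), and a definition of $x_0$ (which exists because $x_0 \in D_\lambda$). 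Everything here except the boundedness step flagged above is routine verification.
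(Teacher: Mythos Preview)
The paper does not prove this statement at all: it is recorded as a ``standard fact about $\mathrm{HOD}$'' and left without argument. Your write-up supplies exactly the routine proof one would expect, and I find no gaps in it. The only point you flag as potentially delicate --- that $\gamma \mapsto (V_\gamma)^V$ must be definable in $V[G]$ so that Replacement bounds $\sup_{x\in X} h(x)$ --- is indeed the crux, but it is already built into the paper's setup: the very definition of $V((\mu^\mu)^{V[G]}) = \mathrm{HOD}^{V[G]}_{V \cup (\mu^\mu)^{V[G]}}$ presupposes that $V$ is a definable class in $V[G]$, and for the L\'{e}vy collapse this is immediate (e.g.\ via the homogeneity/ground-model definability arguments you cite). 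Your handling of the parameters for $f$ --- unfolding the definitions of $X$ and of a fixed $x_0$ rather than using them directly --- is the correct move to keep $f$ inside $N$.
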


\section{Perfect set dichotomy in Solovay model}



In this section, we prove that in the Solovay model $\VomegaVG$
the perfect set dichotomy holds for all equivalence relations on $\mbbR$.

\begin{theorem}\label{dic_equi_Solovay}
  Assume $(\ast)_{\omega, \kappa}$.
  In $V((\omega^\omega)^{V[G]})$, suppose $E$ is an equivalence relation on $\mbbR$.
  Then either $\mbbR/E$ is well-orderable, or else there is a perfect set of pairwise $E$-inequivalent reals.
\end{theorem}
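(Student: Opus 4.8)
The plan is to run a dichotomy whose two horns are decided by a Baire--category property of $E$, exploiting that in $\VomegaVG$ every set of reals has the perfect set property and the Baire property (Solovay). The key preliminary reduction is to absorb into the ground model the real parameter used to define $E$, which makes $E$ ``absolutely definable'' on small forcing extensions of $V$.

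\emph{Parameter absorption.} Write $(x,y)\in E\iff V[G]\models\varphi(v,r,x,y)$ with $v\in V$, $r\in\mbbR^{V[G]}$ and $\kappa$ definable from $v$. Since $r\in\mcalH_\kappa^{V[G]}$, Fact~\ref{fact:factor} yields a $\coll(\omega,{<}\kappa)$‑generic $G'$ over $V[r]$ with $V[G]=V[r][G']$ and $\kappa$ still inaccessible in $V[r]$; replacing $V$ by $V[r]$, we may assume $E$ is defined from $v$ alone. Letting $\psi$ be the formula ``$\Vdash_{\coll(\omega,{<}\kappa)}\varphi(\check v,\check x,\check y)$'', the remarks after Fact~\ref{fact:factor} give, for every poset $\mbbP\in\mcalH_\kappa$, every $\mbbP$‑generic $h$ over $V$ with $h\in V[G]$, and all $a,b\in V[h]$, that $(a,b)\in E\iff V[h]\models\psi(v,a,b)$. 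In particular $E\cap(\mbbR^V)^2\in V$, and more generally $E\cap(\mbbR^{V[G_\gamma]})^2\in V[G_\gamma]$ for $\gamma<\kappa$; since these models satisfy $\AC$, the $E$‑quotient of their reals is well‑ordered there. We use this mainly to dispose of degenerate situations: if every real of $V[G]$ is $E$‑equivalent to a real of $V$, then $\mbbR/E$ embeds into $\mbbR^V/(E\cap(\mbbR^V)^2)$ inside $\VomegaVG$ (fix a well‑ordering of $\mbbR^V$ in $V$ and the induced rank function on $E$‑classes of $V$‑reals, which lies in $V\subseteq\VomegaVG$; send each $x$ to the rank of its — any — $E$‑equivalent $V$‑real), and the latter set is well‑ordered in $V$, so $\mbbR/E$ is well‑orderable.

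\emph{The dichotomy.} Work in $M:=\VomegaVG$. Two sets with the Baire property that are each non‑meager must intersect (each contains a comeager subset of a common basic open set), so, since all sets of reals in $M$ have the Baire property, at most countably many $E$‑classes are non‑meager; moreover, sending such a class to the least index of a rational in the basic open set witnessing its non‑meagerness is an injection of the non‑meager classes into $\omega$. Let $B\in M$ be the union of the non‑meager classes, so $\mbbR\setminus B$ is exactly the set of reals lying in a meager class. Case 1: $\mbbR\setminus B$ is countable in $M$. Then $\mbbR/E$ is exhausted by the $\le\aleph_0$ non‑meager classes together with the classes contained in $\mbbR\setminus B$, of which there are also $\le\aleph_0$ (fix an injection $\mbbR\setminus B\hookrightarrow\omega$ in $M$ and send each such class to the least index it meets); hence $\mbbR/E$ is countable in $M$, in particular well‑orderable. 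Case 2: $\mbbR\setminus B$ is uncountable in $M$. By the perfect set property fix a perfect $P\subseteq\mbbR\setminus B$; every $E$‑class meeting $\mbbR\setminus B$ is meager in $\mbbR$. I would then perform a Mycielski--Galvin style fusion inside $P$ — using the Baire property of arbitrary subsets of $P^2$ and repeatedly invoking the perfect set property to pass to perfect subsets on which no $E$‑class is comeager on any relatively open box — to obtain a perfect $P'\subseteq P$ with $E\cap(P')^2$ meager in the product space $(P')^2$. Then $(E\setminus\Delta)\cap(P')^2$ is meager, so Mycielski's theorem (an $\omega$‑length fusion, valid in $\ZF+\DC$ for any meager subset of a perfect Polish square) produces a perfect $P''\subseteq P'$ with $(a,b)\notin E$ for all distinct $a,b\in P''$ — a perfect set of pairwise $E$‑inequivalent reals. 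Since cases 1 and 2 are exhaustive, this proves the dichotomy.

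\emph{Main obstacle.} The heart of the matter is the fusion delivering $P'$: a class that is meager in $\mbbR$ can nevertheless be comeager relative to some relatively open subset of $P$, so one cannot apply the Baire property to $E\cap P^2$ naïvely. At each splitting node the construction must recognize the finitely many $E$‑classes that are locally comeager on the current boxes and, via the perfect set property, shrink into perfect subboxes avoiding them; the real content is that this process never gets stuck — equivalently, that whenever $\mbbR\setminus B$ is uncountable a perfect $E$‑free set genuinely exists. (An alternative, closer to the forcing machinery above, is to use case "not every real is $E$‑equivalent to a $V$‑real", pick $\mbbP\in\mcalH_\kappa$, a name $\dot x$ and $p$ with $p\Vdash_\mbbP$``$\dot x$ is $E$‑inequivalent to every real of $V$'', and try to spread $\dot x$ out over a perfect set of mutually generic filters; but this route is subtle because two mutually $\mbbP$‑generic copies of $\dot x$ need not be $E$‑inequivalent — an equivalence relation may have a comeager class swallowing all sufficiently generic reals — so the witness $(\mbbP,\dot x,p)$ must first be reconfigured.) This fusion is also the step reworked, with the $\mu$‑Baire property and a $\mu^{<\mu}$‑support Lévy collapse in place of Cohen‑style fusions, for the statement about $\mu^\mu$ in the generalized Solovay model (Theorem~\ref{thm:psd_genSolovay}).
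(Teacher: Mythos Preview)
Your parameter‑absorption step is fine and matches the paper. The problem is the dichotomy you set up afterwards: the case split ``$\mbbR\setminus B$ countable / uncountable'' is the wrong one, and your Case~2 is not a theorem. Concretely, in $\VomegaVG$ consider the equivalence relation $E$ on $\mbbR$ whose classes are $\mbbR\setminus WO$ together with the sets $WO_\alpha=\{x\in WO: \text{the order type coded by }x\text{ is }\alpha\}$ for $\alpha<\omega_1$. Then $\mbbR/E$ is in bijection with $\omega_1+1$, hence well‑orderable; there is \emph{no} perfect set of pairwise $E$‑inequivalent reals (such a set would inject $\mbbR$ into $\omega_1$). Yet the only non‑meager class is $\mbbR\setminus WO$, so $B=\mbbR\setminus WO$ and $\mbbR\setminus B=WO$ is uncountable. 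You are squarely in your Case~2, and the fusion you promise cannot exist. The sentence ``whenever $\mbbR\setminus B$ is uncountable a perfect $E$‑free set genuinely exists'' is simply false; the obstacle you flagged is fatal, not merely technical. (The minor slip ``two non‑meager sets with the BP must intersect'' is also false as stated, though the intended conclusion that there are only countably many non‑meager classes is correct.)

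The paper's proof avoids Baire category entirely. After absorbing $r$, it works with triples $(\xi,p,\dot x)$ where $\dot x$ is a $\coll(\omega,\xi)$‑name for a real, and with $\Eval(\xi,p,\dot x)=\{\dot x[h]:h\text{ is }\coll(\omega,\xi)\text{‑generic over }V,\ p\in h\}$. The case split is: either (I) for every such triple some $p\le q$ makes $\Eval(\xi,p,\dot x)$ lie inside a single $E$‑class, or (II) some triple $(\xi,q,\dot x)$ resists this for all $p\le q$. In Case~I one well‑orders $\mbbR/E$ by sending each class to the least triple whose $\Eval$ it contains; this is how the $\aleph_1$‑many‑classes situation (like the $WO$ example above) lands on the well‑orderable side. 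In Case~II one proves that $(q,q)$ forces in $\coll(\omega,\xi)^2$ that $\dot x_{\mathrm{left}}$ and $\dot x_{\mathrm{right}}$ are $E$‑inequivalent (this is the step you called ``subtle'' --- it works precisely because the hypothesis is ``no extension makes $\Eval$ an $E$‑component'', which is strictly stronger than your ``$\dot x$ is inequivalent to every $V$‑real''), and then a perfect tree of mutually generic filters yields the perfect $E$‑free set. The crucial idea you are missing is this forcing‑theoretic formulation of the dichotomy; no amount of Mycielski--Galvin fusion will rescue the Baire‑category split.
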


\begin{proof}
  We work in $V[G]$.
  Let $E$ be an equivalence relation on $\mbbR$ in $\VomegaVG$.
  Below, $\mcalH_\kappa$ denotes the one in $V$.
  Take a formula $\varphi$, $v \in V$ and $r \in \omega^\omega$ such that $x \, E \, y$ if and only if $\varphi (v,r,x,y)$ for all $x,y \in \mbbR$.
  By replacing the ground model $V$ with $V[r]$, we may assume that $r \in V$.
  So we may omit $r$, that is, for all $x,y \in \mbbR$
  \[
  x \, E \, y \ \Leftrightarrow \ \varphi (v,x,y) \, .
  \]
  We may also assume $\kappa$ is definable from $v$.
  
  Let $\psi (v,x,y)$ be the formula stating that $\Vdash_{\coll(\omega, {<}\kappa)} \varphi ( \check{v} , \check{x} , \check{y} )$.
  If $\mbbP$ is a poset in $\mcalH_\kappa$, and $h$ is a $\mbbP$-generic filter over $V$ with $h \in V[G]$, then for any $x,y \in V[h]$ we have
  \[
  x \, E \, y \ \Leftrightarrow \ V[h] \models \psi (v,x,y) \, .
  \]

  In $V$, let $\Omega$ be the set of all triples $( \xi , p , \dot{x} )$ such that $\xi < \kappa$, $p \in \coll (\omega, \xi)$ and $\dot{x}$ is a 
  $\coll (\omega, \xi)$-name for a real with $\dot{x} \in \mcalH_\kappa$.
  For $( \xi , p , \dot{x} ) \in \Omega$, let
  \[
  \Eval (\xi,p,\dot{x}) :=
  \{ \dot{x}[h] \mid \mbox{$h$ is a 
  $\coll (\omega, \xi)$-generic filter over $V$ with $p\in h$} \} \, .
  \]

  A non-empty set $X\subseteq\mbbR$ is called an $E$-\textit{component} if $x \, E \, y$ for all $x, y \in X$. The rest of the proof splits into two cases.

  \vskip.5\baselineskip

  \noindent\underline{(Case I)} For any $( \xi , q , \dot{x} ) \in \Omega$ there is $p \leq q$ such that $\Eval (\xi,p,\dot{x})$ is an $E$-component.
  
  \vskip.5\baselineskip

  First we establish the following claim.

  \begin{claim} \label{E_component}
  For any $a \in \mbbR$ there is $( \xi , p , \dot{x} ) \in \Omega$ such that $a \in \Eval ( \xi , p , \dot{x} )$ and $\Eval ( \xi , p , \dot{x} )$ is an $E$-component.
  \end{claim}

  \noindent\textit{Proof of Claim}.
  Suppose $a \in \mbbR$. Then there is $\xi < \kappa$ and a $\coll (\omega, \xi)$-generic filter $h$ over $V$ such that $a = \dot{x}[h]$. Let
  \[
  D :=
  \{ p \in \coll ( \omega , \xi ) \mid \mbox{$\Eval ( \xi , p , \dot{x} )$ is an $E$-component} \} \, .
  \]
  Then $D$ is dense in $\coll ( \omega , \xi )$ by the assumption of Case I.
  By the homogeneity of $\coll(\omega, {<}\kappa)$, $p \in D$ if and only if in $V$, $1_{\coll(\omega, {<}\kappa)}$ forces the following.
  \begin{renumerate}
      \item $\varphi ( \check{v} , \check{\dot{x}} [ h_0 ] , \check{\dot{x}} [ h_1 ] )$ for any $\coll ( \omega , \check{\xi} )$-generic filters $h_0$ and $h_1$ over $V$ containing $\check{p}$.
  \end{renumerate}
  Here note that $\check{\dot{x}}$ is a $\coll(\omega, {<}\kappa)$-name which will be interpreted as a $\coll ( \omega , \xi )$-name $\dot{x} \in V$ in generic extensions of $V$ by $\coll(\omega, {<}\kappa)$, and so all parameters in (i) are check names.
  So $D \in V$.
  
  Take $p \in h \cap D$. Then $( \xi , p , \dot{x} )$ is as desired.
  \hfill $\square$(Claim \ref{E_component})

  \vskip.5\baselineskip

  Since $\AC$ holds in $V$, we can take a well-ordering $\leq_\Omega$ of $\Omega$.
  By Claim \ref{E_component}, for each $A \in \mbbR / E$, there is $( \xi , p , \dot{x} ) \in \Omega$ such that $\Eval ( \xi , p , \dot{x} ) \subseteq A$. For $A \in \mbbR / E$ let $( \xi_A , p_A , \dot{x}_A )$ be the $\leq_\Omega$-least such $( \xi , p , \dot{x} ) \in \Omega$.
  For $A,B \in \mbbR / E$, let $A \trianglelefteq B$ if $( \xi_A , p_A , \dot{x}_A ) \leq_\Omega ( \xi_B , p_B , \dot{x}_B )$.
  Then $\trianglelefteq$ well-orders $\mbbR / E$.
  Since $\leq_\Omega$ and $ \Eval $ belong to $ \VomegaVG$, $\trianglelefteq$  belongs to $\VomegaVG$.

  \vskip.5\baselineskip
  
  \noindent\underline{(Case II)} There is $( \xi , q , \dot{x} ) \in \Omega$ such that $\Eval ( \xi , p , \dot{x} )$ is not an $E$-component for any $p \leq q$.

  \vskip.5\baselineskip
 
  Take $(\xi, q, \dot{x}) \in \Omega$ as in the assumption.
  Let $\dot{x}_\mathrm{left}$ and $\dot{x}_\mathrm{right}$ be $\coll(\omega,\xi)\times \coll(\omega, \xi)$-names in $V$ such that  $\dot{x}_\mathrm{left} [ h_0 \times h_1 ] = \dot{x} [ h_0 ]$, and $\dot{x}_\mathrm{right} [ h_0 \times h_1 ] = \dot{x} [ h_1 ]$ for any $\coll ( \omega , \xi ) \times \coll ( \omega , \xi )$-generic filter $h_0 \times h_1$ over $V$.
 
  \begin{claim} \label{claim:product_force}
  In $V$, $(q,q) \Vdash_{\coll(\omega, \xi) \times \coll(\omega, \xi)} \lnot \psi ( \check{v} , \dot{x}_\mathrm{left} , \dot{x}_\mathrm{right} )$.
  \end{claim}
 
  \noindent\textit{Proof of Claim}.
  Assume the claim fails. Then there is $(q_0,q_1)\leq (q,q)$ such that $(q_0,q_1) \Vdash \psi ( \check{v} , \dot{x}_\mathrm{left} , \dot{x}_\mathrm{right} )$ in $V$.
  
  By the choice of $( \xi , q , \dot{x} )$, $\Eval ( \xi, q_0, \dot{x} )$ is not an $E$-component.
  Hence there are $\coll(\omega,\xi)$-generic filters $h_0, h_1$ over $V$ such that $q_0 \in h_0, h_1$ and $\lnot ( \dot{x} [ h_0 ] \, E \, \dot{x} [ h_1 ] )$.  
  Since $\mcalP^{V[h_i]} (\coll(\omega, \xi))$ is countable, we can take a $\coll(\omega, \xi)$-generic filter $k$ over $V[h_0]$ and $V[h_1]$ with $q_1 \in k$.

  By the choice of $( q_0 , q_1 )$, $\psi ( v , \dot{x}_\mathrm{left} , \dot{x}_\mathrm{right} )$ holds in $V[ h_i \times k ]$ for $i = 0,1$.
  This means that $\dot{x}_\mathrm{left} [ h_i \times k ] \, E \, \dot{x}_\mathrm{right} [ h_i \times k ]$ for $i = 0,1$.
  Then $\dot{x} [ h_0 ] \, E \, \dot{x} [k] \, E \, \dot{x} [ h_1 ]$.
  This contradicts the choice of $h_0$ and $h_1$.
  \hfill $\square$(Claim \ref{claim:product_force})

  \vskip.5\baselineskip
 
  Note that $\mcalP^V (\coll(\omega, \xi))$ is countable in $\VomegaVG$.
  Then in $\VomegaVG$ we can construct a sequence $\langle q_s \mid s \in 2^{< \omega} \rangle$ in $\coll ( \omega , \xi )$ such that
  \begin{itemize}
      \item $q_\emptyset = q$, and $q_s \geq q_t$ if $s \subseteq t$,
      \item $h_b \times h_c$ is a $\coll ( \omega , \xi ) \times \coll ( \omega , \xi )$-generic filter over $V$ for any distinct $b,c \in 2^\omega$,
  \end{itemize}
  where $h_b$ is a filter generated by $\{ q_{b \upharpoonright n} \mid n < \omega \}$ for $b \in 2^\omega$.
  By Claim \ref{claim:product_force}, $\langle \dot{x} [ h_b ] \mid b \in 2^\omega \rangle$ is pairwise $E$-inequivalent.
  By the perfect set property, we can take a perfect subset of $\{ \dot{x} [ h_b ] \mid b \in 2^\omega \}$ in $\VomegaVG$.
  This set is a perfect set of $E$-inequivalent reals.
  \end{proof}

 
 


\section{Generalization of Solovay model}\label{sec:higher}

We consider the generalized Solovay model $V((\mu^\mu)^{V[G]})$ under the assumption $(\ast)_{\mu, \kappa}$ for $\mu>\omega$
and show that the perfect set dichotomy holds for all equivalence relations on $\mu^\mu$ in this model.

First, we recall basic definitions.
Let $\mu$ be a regular uncountable cardinal. $\mu^\mu$ is equipped with the topology generated by the basic open sets 
  $N_t = \{ x\in \mu^\mu \mid t \subseteq x\}$
 for each $t \in \mu^{<\mu}$.

  Suppose that $T$ is a subtree of $\mu^{<\mu}$, 
  that is a $\subseteq$-downward closed subset of $\mu^{<\mu}$.
  A \textit{branch} of $T$ is $b \in \mu^\mu$ such that 
    $b \restrict \alpha \in T$
    for all $\alpha < \mu$.
    The \textit{body} $[T]$ of $T$ is the collection of branches in $T$.
  A node $s \in T$ is \textit{terminal} if it has no direct successor in $T$ and \textit{splitting} if it has at least two direct successors in $T$.
  $T$ is \textit{closed} if every strictly increasing sequence in $T$ of length $<\mu$ has an upper bound in $T$.
  $T$ is \textit{perfect} if $T$ is closed and for each node $s \in T$ there is a splitting node $t \in T$ with $s \subseteq t$.
  
  A subset $A \subseteq \mu^\mu$ is \textit{perfect} if $A=[T]$ for some perfect tree $T$.
  A subset $A \subseteq \mu^\mu$ has the \textit{perfect set property} if either $|A|\leq\mu$ or $A$ has a perfect subset.

\subsection{Quotient forcing of Lévy collapse}
 The key reason why Solovay's  original proof cannot be straightforwardly generalized to the uncountable setting is that a quotient forcing of a $\mu$-closed forcing is not necessarily $\mu$-closed. 
 Consequently, the higher analogue of Fact \ref{fact:factor} does not hold.
 To overcome this problem, we focus only on the extensions of $V$ of which $V[G]$ is an extension by the Lévy collapse.

\begin{definition}\label{def:nice}
  Assume $(\ast)_{\mu,\kappa}$.
  We say that a set of ordinals $x \in \mcalH_\kappa^{V[G]}$ 
  is \textit{nice} if for any 
  $y \in \mcalH_\kappa^{V[G]}$
  there is a poset $\mbbP \in \mcalH_\kappa^{V[x]}$
  and a $\mbbP$-generic filter $g$ over $V[x]$ such that $\mbbP$ is $\mu$-closed in $V[x]$, 
  and $y \in V[x][g]$.
\end{definition}

Note that the niceness is definable in $\VmuVG$.
Note that for each $\xi < \kappa$, (a set of ordinals naturally coding) $G_\xi$ is nice, since $V[G]$ is a forcing extension of $V[ G_\xi ]$ by a $\mu$-closed forcing $\coll ( \mu , [ \xi , \kappa ))$.

We prove the following generalization of Fact \ref{fact:factor}.
Note that the following proposition for $\mu = \omega$ is the same as Fact \ref{fact:factor}.

\begin{prop} \label{prop:coll_unctble}
  Assume $(\ast)_{\mu, \kappa}$.
Suppose $x \in \mcalH_\kappa^{V[G]}$ is nice in $V[G]$.
Then, in $V[G]$, there is a $\coll ( \mu , {<} \kappa )$-generic filter $G'$ over $V[x]$ such that $V[G] = V[x][G']$.
\end{prop}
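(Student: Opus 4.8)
The plan is to mimic the classical factoring argument for the Lévy collapse, but routed entirely through $\mu$-closed intermediate forcings so as to avoid the failure of the higher analogue of Fact \ref{fact:factor}. Fix $x \in \mcalH_\kappa^{V[G]}$ nice. First I would apply the definition of niceness to $y := G$ itself (coded as a set of ordinals in $\mcalH_\kappa^{V[G]}$): this yields, in $V[x]$, a $\mu$-closed poset $\mbbP \in \mcalH_\kappa^{V[x]}$ and a $\mbbP$-generic $g$ over $V[x]$ with $G \in V[x][g]$, hence $V[G] \subseteq V[x][g]$; since also $x, g \in V[G]$ (the latter after identifying $g$ via its name and $G$), we get $V[G] = V[x][g]$. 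Next I would invoke Fact \ref{fact:cohen_equiv}: in $V[x]$ we have $\mu^{<\mu} = \mu$ (as $\mu$ is regular and $V[x]$ is obtained by a small forcing — or more carefully, since $x \in \mcalH_\kappa$ and $\kappa$ is inaccessible, one checks $|\mbbP| < \kappa$ and $\mu^{<\mu}=\mu$ is preserved below $\kappa$), so by absorbing into $\coll(\mu,\lambda)$ for a suitable $\lambda < \kappa$ — or directly, since $\mbbP$ is $\mu$-closed of size $<\kappa$ in $V[x]$ and $\coll(\mu,{<}\kappa)$ over $V[x]$ is $\mu$-closed, homogeneous, and collapses everything below $\kappa$ to $\mu$ — one shows $\mbbP$ embeds as a complete subposet into (a forcing equivalent to) $\coll(\mu,{<}\kappa)^{V[x]}$.

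The cleanest route, and the one I would actually carry out, is this: Lemma \ref{lem:projection}(2) is designed exactly for this situation. We have $H := G$, a $\coll(\mu,{<}\kappa)$-generic filter over $V$ (so over the complete Boolean algebra $\mbbP := \mbbB(\coll(\mu,{<}\kappa))^V$), we have $g$ a $\mbbP'$-generic filter over $V[x]$ for the $\mu$-closed $\mbbP' \in \mcalH_\kappa^{V[x]}$ with $V[G]=V[x][g]$, and crucially both $G \in V[x][g]$ and $g \in V[G]$. Actually the symmetry I want is between $V[x]$ and $V[G]$: apply Lemma \ref{lem:projection}(2) with the roles set up so that $V[x]$ is the ``$V$'', $\mbbQ$ is the $\mu$-closed poset $\mbbP'$ producing $V[G]=V[x][g]$, and $\mbbP$ is the complete Boolean algebra... but this requires a $\coll(\mu,{<}\kappa)$-generic over $V[x]$ living inside $V[x][g]$, which is what we are trying to build. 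So instead I would argue directly: inside $V[x]$, the tail collapse $\coll(\mu,[\eta,\kappa))$ for any $\eta$ with $x$ ``appearing'' by stage $\eta$ is $\mu$-closed, nonatomic, homogeneous, of size $<\kappa$, and collapses all cardinals in $[\mu,\kappa)$ to $\mu$; a counting/genericity argument (the standard one: $\mcalH_\kappa^{V[x]}$ has size $\kappa$ and $\coll(\mu,{<}\kappa)^{V[x]}$ is $\kappa$-cc and $\kappa$-universally-Cohen for $\mu$-closed posets of size $<\kappa$) shows $\mbbP'$ is forcing-equivalent to a complete subalgebra of $\coll(\mu,{<}\kappa)^{V[x]}$, and moreover that $\coll(\mu,{<}\kappa)^{V[x]}$ is forcing-equivalent to $\mbbP' \ast \dot{\mbbR}$ for some further $\mu$-closed quotient $\dot{\mbbR}$. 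Then taking an $\dot{\mbbR}^g$-generic $g'$ over $V[x][g] = V[G]$ — possible since such generics exist in $V[G]$ when the relevant power set is collapsed — gives $G' := g \ast g'$, a $\coll(\mu,{<}\kappa)$-generic over $V[x]$ with $V[x][G'] = V[G][g'] = V[G]$, the last equality because $\dot{\mbbR}^g$ over $V[G]$ is $\mu$-closed of size... hmm.

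Let me state the intended skeleton without the bookkeeping: (1) niceness applied to $G$ gives a $\mu$-closed $\mbbP' \in \mcalH_\kappa^{V[x]}$ with $V[G] = V[x][g]$; (2) in $V[x]$, since $\mbbP'$ is $\mu$-closed of size $<\kappa$ and $\mu^{<\mu}=\mu$ holds, $\mbbP'$ completely embeds into $\coll(\mu,{<}\kappa)^{V[x]}$ with $\mu$-closed quotient, hence $\coll(\mu,{<}\kappa)^{V[x]} \cong \mbbP' \ast \dot{\mbbQ}$ with $\Vdash_{\mbbP'}$ ``$\dot{\mbbQ}$ is $\mu$-closed of size $<\kappa$''; (3) in $V[G]=V[x][g]$, interpret $\dot{\mbbQ}$ as $\mbbQ^g$; since $V[G] \models |\mcalH_\kappa^{V[x]}| = \mu$ (all of $\kappa$ is collapsed to $\mu$ in $V[G]$), the poset $\mbbQ^g$ has size $\mu$ and only $\mu$-many dense subsets lying in $V[x][g]$... no — dense subsets in $V[x][g]$, of which there are $\mu$ many since $\mbbQ^g$ has size $\mu$ and... actually we need a $\mbbQ^g$-generic over $V[x][g]$ that is ALSO generic over $V[x]$; since $\dot{\mbbQ}$-generic over $V[x]$-via-$g$ is by definition the same as $\mbbQ^g$-generic over $V[x][g]$ for names, and $\mbbQ^g$ has size $\mu$ while $V[G]$ sees only $\mu$-many antichains of it from $V[x][g]$... this needs $V[G] \neq V[x][g]$, contradiction! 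So step (3) cannot produce a new generic inside $V[G]$; rather, the generic $G'$ must be \emph{definable from $g$ and $x$} inside $V[G]$. The resolution is that the complete embedding $\mbbP' \hookrightarrow \coll(\mu,{<}\kappa)^{V[x]}$ should be chosen so that $g$ itself \emph{extends} to a $\coll(\mu,{<}\kappa)^{V[x]}$-generic $G'$ with $V[x][G'] = V[x][g] = V[G]$ — i.e. the quotient $\dot{\mbbQ}$ is forced to be \emph{trivial} (a singleton) because $\mbbP'$ already collapses $\kappa$ to $\mu$. This is the real content: \textbf{any} $\mu$-closed forcing $\mbbP' \in \mcalH_\kappa^{V[x]}$ that collapses $\kappa$ to $\mu^+$... no, to make $\kappa = (\mu^+)^{V[G]}$ we need $\mbbP'$ to collapse $\kappa$ to $\mu^+$ exactly — but niceness only gives $\mbbP'$ with $G \in V[x][g]$, and $G$ does collapse $\kappa$ to $\mu^+$, so $\mbbP'$ does too. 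Hence by the uniqueness (up to forcing equivalence, via a counting argument in $V[x]$) of the $\mu$-closed homogeneous forcing of size $<\kappa$ collapsing $[\mu^+,\kappa]$ to $\mu$... but $\mbbP'$ need not be homogeneous. Nonetheless $\mbbB(\mbbP')$ embeds $\coll(\mu,{<}\kappa)^{V[x]}$ as a complete subalgebra \emph{and vice versa} (each collapses the other's size to $\mu$), so by a back-and-forth they are forcing-equivalent; fix an isomorphism $\iota: \mbbB(\coll(\mu,{<}\kappa))^{V[x]} \to \mbbB(\mbbP')$, let $G'$ be the $\coll(\mu,{<}\kappa)^{V[x]}$-generic over $V[x]$ generated by $\iota^{-1}[\,\bar g\,]$ where $\bar g$ is the generic ultrafilter on $\mbbB(\mbbP')$ from $g$; then $G' \in V[x][g] = V[G]$ and $V[x][G'] = V[x][g] = V[G]$, as required.

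\textbf{The main obstacle} is precisely step (2)/(this last paragraph): showing that a $\mu$-closed $\mbbP' \in \mcalH_\kappa^{V[x]}$ which happens to collapse $\kappa$ to $\mu^+$ is forcing-equivalent to $\coll(\mu,{<}\kappa)^{V[x]}$. The delicate points are (i) verifying $\mu^{<\mu} = \mu$ and more generally that $\kappa$ is still inaccessible (or at least that the relevant cardinal arithmetic survives) in $V[x]$ — here one uses $x \in \mcalH_\kappa^{V[G]}$ together with niceness to locate $x$ in some $V[G_\eta]$ with $\eta<\kappa$, or argues via a name of size $<\kappa$; (ii) running the back-and-forth/absorption: $\coll(\mu,{<}\kappa)^{V[x]}$ absorbs $\mbbP'$ because it is ``$\coll(\mu,{<}\kappa)$-universal'' for $\mu$-closed posets of size $<\kappa$ (standard, using $\mu^{<\mu}=\mu$ and that $\coll(\mu,{<}\kappa)$ has $\coll(\mu,\lambda)$ as complete subforcings for all $\lambda<\kappa$ together with Fact \ref{fact:cohen_equiv}-type identifications of tails), while $\mbbP'$ absorbs $\coll(\mu,{<}\kappa)^{V[x]}$ because $V[x][g] = V[G]$ contains a $\coll(\mu,{<}\kappa)$-generic over $V[x]$, namely (a tail of) $G$ reorganized — wait, $G$ is generic over $V$, not $V[x]$; but $G_{[\eta,\kappa)}$ for $\eta$ past the stage coding $x$ is $\coll(\mu,[\eta,\kappa))$-generic over $V[x]$ and forcing-equivalent to $\coll(\mu,{<}\kappa)^{V[x]}$, and it lies in $V[G] = V[x][g]$, giving the reverse absorption. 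Assembling these via Lemma \ref{lem:projection}(2) (which converts ``each contains a generic for the other, and each generic is in the other's extension'' into an honest dense embedding in the ground model $V[x]$) closes the argument.
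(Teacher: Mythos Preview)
There is a genuine gap at the very first step. You propose to apply niceness with $y := G$, but the definition of ``nice'' quantifies only over $y \in \mcalH_\kappa^{V[G]}$, and $G$ is not there: as a subset of $\coll(\mu,{<}\kappa)$ it has size $\kappa$ in $V[G]$. More fundamentally, the object you want---a $\mu$-closed poset $\mbbP' \in \mcalH_\kappa^{V[x]}$ with $V[G] = V[x][g]$ for some $\mbbP'$-generic $g$---cannot exist: any poset of size $< \kappa$ is trivially $\kappa$-c.c.\ and hence preserves $\kappa$, so it cannot produce the model $V[G]$ in which $\kappa = \mu^+$. All the subsequent absorption and back-and-forth discussion is therefore built on a phantom. (Your own running commentary already senses the trouble---e.g.\ the paragraph where the quotient $\dot{\mbbQ}$ is supposed to be trivial---but the underlying size obstruction is never confronted.)

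The paper's route avoids this by never trying to realize $V[G]$ over $V[x]$ in one small step. Instead it fixes a stage $\nu+1 < \kappa$ with $x \in V[G_{\nu+1}]$ and $\nu^{<\mu}=\nu$ in $V[x]$, and proves (Lemma \ref{lem:nu+1}) that $V[G_{\nu+1}]$ is already a $\coll(\mu,{<}\,\nu+1)$-generic extension of $V[x]$; combining with the tail $\coll(\mu,[\nu+1,\kappa))$ then gives the proposition. The content of Lemma \ref{lem:nu+1} is that the quotient $(\coll(\mu,{<}\,\nu+1)/x)^f$ (via a projection from Lemma \ref{lem:projection}) is, below some condition in $G_{\nu+1}$, \emph{strongly $\mu$-strategically closed}---not merely $\mu$-strategically closed---which is exactly what Eskew's strengthening of McAloon (Fact \ref{fact:coll_equivalent}) requires to identify it with $\coll(\mu,\nu)$. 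Obtaining that strong strategic closure is where niceness actually enters: it is used inside the ``four models'' Lemma \ref{lem:4_models}, which in turn rests on the descent Lemma \ref{lem:strong_str_closed_quotient} (strong $\mu$-strategic closure of $\mbbP * \dot{\mbbS}$ plus $\mu$-strategic closure of $\dot{\mbbS}$ implies strong $\mu$-strategic closure of $\mbbP$). None of this machinery appears in your proposal, and it is precisely what replaces the missing one-step factorization.
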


To prove the proposition, we use the notion of the (strong) $\mu$-strategic closure of posets.
Let $\mbb{P}$ be a poset and $\mu$ be a regular cardinal.

$\Game_\mrm{I} ( \mbb{P} , \mu )$ is the following two players game of I and II of length $\mu$.
\[
\begin{array}{c|ccccccccccc}
\mbox{I} & p_0 &  & p_2 & & \dots & p_\omega & & \dots & p_{\omega + \omega} &  & \cdots \\
\hline
\mbox{II} & & p_1 & & p_3 & \dots &  & p_{\omega +1} & \dots &  & p_{\omega + \omega + 1} & \cdots
\end{array}
\]
For each $\xi < \mu$, I and II in turn choose a lower bound $p_\xi$ of $\{ p_\eta \mid \eta < \xi \}$ in $\mbb{P}$ if exists.
I chooses $p_\xi$ for each even $\xi$, and II chooses $p_\xi$ for each odd $\xi$.
For a limit $\xi < \mu$, if $\{ p_\eta \mid \eta < \xi \}$ has no lower bound, then the play is over at this point, and I wins.
II wins if a play has been continued to construct a descending sequence $\langle p_\xi \mid \xi < \mu \rangle$.

$\Game_\mrm{II} ( \mbb{P} , \mu )$ is the following modification of $\Game_\mrm{I} ( \mbb{P} , \mu )$.
\[
\begin{array}{c|ccccccccccc}
\mbox{I} & & p_1 &  & p_3 & \dots & & p_{\omega + 1} & \dots & & p_{\omega + \omega +1} & \cdots \\
\hline
\mbox{II} & p_0 = 1_\mbb{P} & & p_2 & &  \dots & p_\omega & & \dots & p_{\omega + \omega} & & \cdots
\end{array}
\]
As in $\Game_\mrm{I} ( \mbb{P} , \mu )$, I and II in turn choose a lower bound $p_\xi$ of $\{ p_\eta \mid \eta < \xi \}$.
I chooses $p_\xi$ for each odd $\xi$, and II chooses $p_\xi$ for each even $\xi$.
But $p_0$ must be $1_\mbb{P}$.
For a limit $\xi < \mu$, if $\{ p_\eta \mid \eta < \xi \}$ has no lower bound, then the play is over at this point, and I wins.
II wins if a play has been continued to construct a descending sequence $\langle p_\xi \mid \xi < \mu \rangle$.

We say that $\mbb{P}$ is (resp.~\emph{strongly}) $\mu$-\emph{strategically closed}
if II has a winning strategy in $\Game_\mrm{II} ( \mbb{P} , \mu )$ (resp.~$\Game_\mrm{I} ( \mbb{P} , \mu )$).
Note that $\Game_\mrm{I} ( \mbb{P} , \mu )$ is harder than $\Game_\mrm{II} ( \mbb{P} , \mu )$ for II.
So the strong $\mu$-strategic closure implies the $\mu$-strategic closure.
Note also that if $\mbb{P}$ is $\mu$-closed, then $\mbb{P}$ is strongly $\mu$-strategically closed
since II wins $\Game_\mrm{I} ( \mbb{P} , \mu )$ no matter how he moves (legally).
We also use the fact that if $\mbb{P}$ is (resp.~strongly) $\mu$-strategically closed,
then $\mbb{P} \restrict p$ is (resp.~strongly) $\mu$-strategically closed for any $p \in \mbb{P}$.

In the proof of Proposition \ref{prop:coll_unctble}, we want to use the following fact
which slightly strengthen the well-known fact due to McAloon.
In the following fact, we cannot weaken the assumption (i) to the $\mu$-strategic closure
of $\mbbP$.

\begin{fact}[Eskew \cite{eskew2025comparing}] \label{fact:coll_equivalent}
Let $\mu$ and $\nu$ be regular cardinals with $\mu \leq \nu$.
Suppose $\mbb{P}$ is a poset with the following properties.
\begin{renumerate}
\item $\mbb{P}$ is strongly $\mu$-strategically closed.
\item $| \mbb{P} | = \nu$, and $\Vdash_\mbb{P} \lchon\, | \nu | = \mu \,\rchon$.
\end{renumerate}
Then $\mbb{P}$ is forcing equivalent to $\coll ( \mu , \nu )$.
\end{fact}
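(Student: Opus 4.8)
The plan is to prove this by the classical McAloon strategy: realize a dense copy of $\coll(\mu,\nu)$ inside the regular open completion $\mbbB(\mbbP)$. Since $\coll(\mu,\nu)$ is forcing equivalent to the tree $(\nu^{<\mu},\supseteq)$ ordered by end-extension (the partial functions whose domain is an ordinal are dense in $\coll(\mu,\nu)$ as $\mu$ is regular), it suffices to build inside $\mbbP$ a tree $\mcalT$ that is dense in $\mbbP$, has root $1_\mbbP$, splits into $\nu$ pairwise incompatible immediate successors at cofinally many levels below $\mu$, fills its limit levels below $\mu$, and has height $\mu$. After contracting its nonsplitting nodes, such a $\mcalT$ is isomorphic to a dense subtree of $\nu^{<\mu}$; since $\mcalT$ is dense in $\mbbP$, the completions then satisfy $\mbbB(\mbbP)=\mbbB(\mcalT)=\mbbB(\coll(\mu,\nu))$, which is the desired forcing equivalence. (We use $\nu^{<\mu}=\nu$, which holds in the intended applications and is in any case necessary for the conclusion, since $\coll(\mu,\nu)$ has density $\nu^{<\mu}$ whereas $\mbbP$ has density at most $\nu$. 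I treat the substantive case $\mu<\nu$; the degenerate case $\mu=\nu$ is handled separately.)

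The branches of $\mcalT$ will be exactly the plays of $\Game_\mrm{I}(\mbbP,\mu)$ in which II follows a fixed winning strategy $\sigma$, available by hypothesis (i). I would build $\mcalT$ by recursion on the level $\alpha<\mu$, playing the role of I at I's stages and letting $\sigma$ dictate II's moves. At a splitting (I) stage, below the current node $s$ I choose a maximal antichain $\{s_\xi : \xi<\nu\}$ of size exactly $\nu$: this is possible because $\mbbP\restrict s$ still forces $|\nu|=\mu<\nu$, and a poset forcing $\nu$ to have size $<\nu$ cannot be $\nu$-c.c., as $\nu$ is regular, so a maximal antichain of size $\geq\nu$ — hence, after trimming, of size exactly $\nu$ — exists below $s$. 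At II's stages I set the unique successor to be $\sigma$'s response to the play along that branch. At a limit level $\lambda<\mu$ the crucial point is that every branch is a play in which II has followed $\sigma$, so by winningness the descending sequence assembled along that branch has a lower bound in $\mbbP$, which I take as the limit node. The recursion runs through all $\alpha<\mu$ precisely because $\sigma$ guarantees that no branch dies at a limit.

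Density is arranged by bookkeeping against a fixed enumeration $\mbbP=\{p_\beta : \beta<\nu\}$. Since each splitting node has $\nu$ immediate successors, which comfortably accommodates $\nu$ tasks, I can ensure that for every $\beta<\nu$ and every node $s$ compatible with $p_\beta$, some element of the splitting antichain below $s$ is refined to lie below $p_\beta$; refining a $\nu$-sized antichain by the two-cell partition associated with a single $p_\beta$ leaves a $\nu$-sized antichain, so splitting is never obstructed. Consequently every $p_\beta$, being compatible with the root, has a node of $\mcalT$ below it, so $\mcalT$ is dense in $\mbbP$.

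I expect the main obstacle to be the limit stages, and this is exactly where hypothesis (i) is indispensable: an arbitrary descending $<\mu$-sequence in $\mbbP$ need not have a lower bound, and only the strong form of strategic closure — II winning $\Game_\mrm{I}$, in which I (here our adversarial splitting construction) moves and II must still keep the play alive through every limit — guarantees that all branches survive the limits simultaneously. The ordinary $\mu$-strategic closure ($\Game_\mrm{II}$) would only let II survive limits along plays that II itself steers, which is insufficient once the splitting choices are made by I; this is the content of the remark that (i) cannot be weakened. The remaining verifications — that $\sigma$'s responses along distinct branches are mutually consistent because $\sigma$ depends only on each branch's own history, that the contracted tree embeds densely into $\nu^{<\mu}$, and that $\mbbB(\mcalT)=\mbbB(\coll(\mu,\nu))$ — are routine.
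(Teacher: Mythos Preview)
The paper does not prove this statement; it is cited as a fact from Eskew~\cite{eskew2025comparing}. Your approach---building a tree inside $\mbbP$ whose branches are the plays of $\Game_\mrm{I}(\mbbP,\mu)$ in which II follows a fixed winning strategy $\sigma$, with I performing the $\nu$-fold splitting---is the standard McAloon-style argument, and you have correctly isolated the crucial point: it is the \emph{strong} form of strategic closure (II winning $\Game_\mrm{I}$, not merely $\Game_\mrm{II}$) that guarantees every branch survives each limit stage when the splitting choices are adversarial. Your remark that $\nu^{<\mu}=\nu$ is needed is also correct, and this hypothesis is explicit in the paper's application (Lemma~\ref{lem:nu+1}).

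Your density paragraph, however, overclaims. You assert that at each splitting node $s$ the $\nu$ children can be chosen so that for \emph{every} $\beta$ with $p_\beta$ compatible with $s$, some child lies below $p_\beta$. Since $\{p_\beta:\beta<\nu\}$ enumerates all of $\mbbP$, this would force the children---an antichain of size $\nu$---to be dense below $s$, which is impossible in an atomless poset. The ``refine by the two-cell partition for a single $p_\beta$'' step is fine for one $\beta$, but iterating it over all $\nu$ many $\beta$'s refines individual antichain elements transfinitely often, and nothing guarantees those descending chains stabilize. The correct bookkeeping distributes the $\nu$ density tasks across the whole tree rather than a single level, and must contend with the fact that II's $\sigma$-responses are not under your control and may destroy compatibility with a given target $p_\beta$; arranging that each target is eventually handled at a node still compatible with it is routine but requires more care than your sketch indicates.
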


We need more preliminaries on the strategic closure.
The following fact is well-known, and its proof is straightforward.

\begin{fact}[folklore] \label{fact:str_closed_basic}
Let $\mu$ be a regular cardinal and $\mbb{P}$ and $\mbb{Q}$ be posets.
\begin{aenumerate}
\item Suppose there is a dense embedding from $\mbb{P}$ to $\mbb{Q}$.
Then $\mbb{P}$ is (resp.~strongly) $\mu$-strategically closed
if and only if $\mbb{Q}$ is (resp.~strongly) $\mu$-strategically closed.
\item Suppose there is a projection from $\mbb{Q}$ to $\mbb{P}$.
If $\mbb{Q}$ is $\mu$-strategically closed, then $\mbb{P}$ is $\mu$-strategically closed.
\end{aenumerate}
\end{fact}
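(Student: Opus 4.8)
The plan is to prove both parts by transferring winning strategies between $\mbb{P}$ and $\mbb{Q}$ through ``shadow plays'': alongside a play of the relevant closure game on one poset we run a play of the corresponding game on the other poset, make the player we wish to help follow a fixed winning strategy there, and translate its moves back and forth across the given map. The translation tools are elementary. If $e\colon\mbb{P}\to\mbb{Q}$ is a dense embedding then $e$ is order-preserving and reflects compatibility --- the latter being exactly clause~(ii) of the definition, read contrapositively --- and hence has the following \emph{pullback property}: for every $p\in\mbb{P}$ and every $q\leq_{\mbb{Q}}e(p)$ there is $p'\leq_{\mbb{P}}p$ with $e(p')\leq_{\mbb{Q}}q$ (take $p''\in\mbb{P}$ with $e(p'')\leq_{\mbb{Q}}q$ by density; then $e(p'')$ and $e(p)$ are compatible, so $p''$ and $p$ are compatible, and any common lower bound works). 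For a projection $f\colon\mbb{Q}\to\mbb{P}$ the analogous tool is simply clause~(iii) of its definition. One further remark: if $\mbb{P}$ is separative then $e(p)\leq_{\mbb{Q}}e(p')$ implies $p\leq_{\mbb{P}}p'$, since every extension of $p$ is then compatible with $p'$; I will assume throughout, as one may, that the posets in play are separative.

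For part~(a), fix a dense embedding $e\colon\mbb{P}\to\mbb{Q}$; I describe the implication ``$\mbb{P}$ $\mu$-strategically closed $\Rightarrow$ $\mbb{Q}$ $\mu$-strategically closed'' in detail, the other three cases being analogous. Fix a winning strategy $\sigma$ for II in $\Game_\mrm{II}(\mbb{P},\mu)$. As a play $\langle q_\xi\mid\xi<\mu\rangle$ of $\Game_\mrm{II}(\mbb{Q},\mu)$ develops, I maintain a shadow play $\langle p_\xi\mid\xi<\mu\rangle$ of $\Game_\mrm{II}(\mbb{P},\mu)$ in which II follows $\sigma$, keeping the invariant $e(p_\xi)\leq_{\mbb{Q}}q_\xi$, with equality whenever $\xi$ is one of II's stages (i.e.\ $\xi=0$, an even successor, or a limit). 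When I plays $q_\xi$ at an odd $\xi$, the invariant at $\xi-1$ gives $q_\xi\leq_{\mbb{Q}}q_{\xi-1}=e(p_{\xi-1})$, so the pullback property produces $p_\xi\leq_{\mbb{P}}p_{\xi-1}$ with $e(p_\xi)\leq_{\mbb{Q}}q_\xi$, a legal shadow move for I; when it is II's turn, $\sigma$ yields $p_\xi$ and I set $q_\xi:=e(p_\xi)$, which is legal because $p_\xi\leq_{\mbb{P}}p_\eta$ gives $e(p_\xi)\leq_{\mbb{Q}}e(p_\eta)\leq_{\mbb{Q}}q_\eta$ for every earlier $\eta$. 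Since $\sigma$ is winning the shadow play has length $\mu$, hence so does the $\mbb{Q}$-play. The converse implication uses the mirror scheme ($e$ copies I's moves in $\mbb{P}$ up to $\mbb{Q}$, and II's moves in $\mbb{P}$ are pulled back from $\mbb{Q}$ via density); here one uses separativity of $\mbb{P}$ to turn the compatibility delivered by the pullback property into the actual order relation needed for a limit move of II. The two ``strong'' versions are the same two arguments played on $\Game_\mrm{I}$ instead of $\Game_\mrm{II}$, the only difference being that at a limit the obligation to supply a lower bound falls on the player following the winning strategy, who by hypothesis always can.

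For part~(b), let $f\colon\mbb{Q}\to\mbb{P}$ be a projection and assume $\mbb{Q}$ is $\mu$-strategically closed, witnessed by a winning strategy $\tau$ for II in $\Game_\mrm{II}(\mbb{Q},\mu)$. I build II's strategy in $\Game_\mrm{II}(\mbb{P},\mu)$ with $\mbb{Q}$ in the role of the shadow, keeping the invariant $f(q_\xi)\leq_{\mbb{P}}p_\xi$, with equality at II's stages. When I plays $p_\xi$ at an odd $\xi$, equality at $\xi-1$ gives $p_\xi\leq_{\mbb{P}}p_{\xi-1}=f(q_{\xi-1})$, so clause~(iii) of the definition of a projection produces $q_\xi\leq_{\mbb{Q}}q_{\xi-1}$ with $f(q_\xi)\leq_{\mbb{P}}p_\xi$, a legal shadow move; when it is II's turn, $\tau$ yields $q_\xi$ and I set $p_\xi:=f(q_\xi)$, a legal move in $\mbb{P}$ since $q_\xi\leq_{\mbb{Q}}q_\eta$ gives $p_\xi=f(q_\xi)\leq_{\mbb{P}}f(q_\eta)\leq_{\mbb{P}}p_\eta$ for all earlier $\eta$ (using the invariant at $\eta$). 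As before the $\mbb{P}$-play reaches length $\mu$, so II wins. I expect the main obstacle, such as it is, to be exactly the bookkeeping needed to make these parity-dependent invariants close up at successor \emph{and} limit stages at once; once the right invariant is written down each case is mechanical. This bookkeeping also explains why (b) is not asserted in the ``strong'' form: in $\Game_\mrm{I}$ the limit moves belong to player~I, but clause~(iii) of a projection is a one-step statement with no limit analogue, so a limit move of I in $\mbb{P}$ cannot in general be reflected through $f$.
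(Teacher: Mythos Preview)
Your shadow-play argument is correct and is exactly the standard construction; the paper does not prove this fact at all, labeling it folklore with a ``straightforward'' proof, so your approach is precisely what the authors have in mind.

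One small point worth tightening: your reduction to separative posets is invoked only in the $\mbb{Q}\Rightarrow\mbb{P}$ direction of~(1), to pass from $e(p)\leq_{\mbb{Q}}e(p_\eta)$ to $p\leq_{\mbb{P}}p_\eta$ at limit stages. As stated this is mildly circular, since the claim that (strong) $\mu$-strategic closure transfers between $\mbb{P}$ and its separative quotient is itself an instance of part~(1). You can break the circle in either of two ways: verify the transfer $\mbb{P}\leftrightarrow\mbb{P}/{\sim}$ directly (the quotient map is surjective and reflects compatibility, so the same shadow-play scheme works with one extra line at I's stages, choosing a common refinement), or else keep the invariant $e(p_\xi)\leq_{\mbb{Q}}q_\xi$ throughout and at even successor $\xi$ obtain $p_\xi$ via the pullback property applied to $q_\xi\leq_{\mbb{Q}}q_{\xi-1}=e(p_{\xi-1})$, which already gives $p_\xi\leq_{\mbb{P}}p_{\xi-1}$ without separativity; at limit $\xi$ the odd stages are cofinal, so one still needs the separativity-style step or the direct transfer argument. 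Either fix is routine, and your write-up is otherwise complete.
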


(2) does not hold for the strong $\mu$-strategic closure.
For example, suppose $\mu \geq \omega_2$ and let $\mbbP$ be
the poset adding a non-reflecting stationary subset of
$E^\mu_\omega = \{ \alpha < \mu \mid \mathrm{cf} ( \alpha ) = \omega \}$.
In $V^\mbbP$, let $\dot{\mbbS}$ be the poset shooting a club through the complement of
the non-reflecting stationary set added by $\mbbP$.
Then $\mbbQ := \mbbP * \dot{\mbbS}$ has a $\mu$-closed dense subset,
and so it is strongly $\mu$-strategically closed.
Also, there is a projection from $\mbbQ$ to $\mbbP$.
But $\mbbP$ is not strongly $\mu$-strategically closed.

As for the strong $\mu$-strategic closure, we have the following.

\begin{lemma} \label{lem:strong_str_closed_quotient}
Let $\mu$ be a regular cardinal, $\mbb{P}$ be a poset and $\dot{\mbbS}$ be a $\mbb{P}$-name for a poset.
Suppose $\mbb{P} * \dot{\mbb{S}}$ is strongly $\mu$-strategically closed, and $1_\mbb{P}$ forces
$\dot{\mbbS}$ to be $\mu$-strategically closed.
Then $\mbb{P}$ is strongly $\mu$-strategically closed.
\end{lemma}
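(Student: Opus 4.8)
The plan is to combine a winning strategy $\tau$ for II in $\Game_{\mrm{I}}(\mbb{P} * \dot{\mbb{S}}, \mu)$ with, in the extension, a winning strategy for II in $\Game_{\mrm{II}}(\dot{\mbb{S}}, \mu)$, in order to manufacture a winning strategy for II in $\Game_{\mrm{I}}(\mbb{P}, \mu)$. The guiding idea is that a run of $\Game_{\mrm{I}}(\mbb{P}, \mu)$ can be ``lifted'' to a run of $\Game_{\mrm{I}}(\mbb{P} * \dot{\mbb{S}}, \mu)$ by attaching an $\dot{\mbb{S}}$-coordinate to each condition; II uses $\tau$ to survive limit stages in the product game, and this in particular forces the $\mbb{P}$-coordinates to have lower bounds, which is exactly what II needs to win $\Game_{\mrm{I}}(\mbb{P},\mu)$. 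The subtlety is that $\tau$ only guarantees a lower bound of the \emph{pairs}, and to keep feeding $\tau$ legal moves we must be able to choose the second coordinate; this is where the hypothesis that $1_\mbb{P}$ forces $\dot{\mbb{S}}$ to be $\mu$-strategically closed enters.

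First I would set up the bookkeeping. Fix a winning strategy $\tau$ for II in $\Game_{\mrm{I}}(\mbb{P} * \dot{\mbb{S}}, \mu)$. I describe a strategy $\sigma$ for II in $\Game_{\mrm{I}}(\mbb{P}, \mu)$. Along a run $\langle p_\xi \mid \xi < \mu\rangle$ of $\Game_{\mrm{I}}(\mbb{P},\mu)$, II will maintain an auxiliary run $\langle (p_\xi, \dot{s}_\xi) \mid \xi < \mu\rangle$ of $\Game_{\mrm{I}}(\mbb{P}*\dot{\mbb{S}}, \mu)$ in which II plays according to $\tau$. At odd stages (II's turn in the $\mbb{P}$-game, which is I's turn in the product game after a shift — here one must be slightly careful matching the parities of $\Game_{\mrm{I}}$, since in $\Game_{\mrm{I}}$ player I moves at even stages): when it is I's turn in the product game, I plays $(p_\xi, \dot{s}_\xi)$ where $p_\xi$ is the move just made in the $\mbb{P}$-game and $\dot{s}_\xi$ is an arbitrary $\mbb{P}$-name forced by $p_\xi$ to be a legal move extending all previous $\dot{s}_\eta$'s — such a name exists because $p_\xi \leq p_\eta$ forces $\dot{s}_\eta$ to be defined and, the run of the $\dot{\mbb{S}}$-component being (forced to be) a partial play, a lower bound can be chosen generically, or at a limit one invokes that $1_\mbb{P}$ forces $\dot{\mbb{S}}$ to be $\mu$-strategically closed. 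When it is II's turn in the product game, II plays $\tau$'s response $(q, \dot{t})$, and then in the $\mbb{P}$-game II plays $q$; set $\dot{s}_\xi := \dot{t}$.

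The crux — and the step I expect to be the main obstacle — is verifying that this $\sigma$ is a \emph{winning} strategy for II in $\Game_{\mrm{I}}(\mbb{P}, \mu)$, i.e.\ that at every limit stage $\lambda < \mu$ the sequence $\langle p_\xi \mid \xi < \lambda \rangle$ has a lower bound in $\mbb{P}$. Suppose not: the play stops at some limit $\lambda$ with no lower bound. Then in the product game the partial play $\langle (p_\xi, \dot{s}_\xi) \mid \xi < \lambda\rangle$ also has no lower bound, since a lower bound there would project to one in $\mbb{P}$. But as long as $\lambda$ is a limit at which II (playing $\tau$) has not yet lost, and I has played legally, the product run has survived to stage $\lambda$, and $\tau$'s being a winning strategy in $\Game_{\mrm{I}}$ means that the set $\{(p_\xi,\dot s_\xi)\mid \xi<\lambda\}$ \emph{does} have a lower bound — contradiction. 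The delicate point is ensuring I's moves in the product game are always legal, which requires: (a) at successor stages, that the chosen name $\dot{s}_\xi$ is forced to extend the previous one — routine given how $\dot s$'s are built; and (b) at limit stages $\xi$ \emph{within} the construction (not the terminal $\lambda$), that the $\dot{\mbb{S}}$-components $\langle \dot s_\eta \mid \eta < \xi \rangle$ have a lower bound in $\mbb{P}$-extensions. For (b), note that $p_\xi$ exists (we're not yet at the terminal failure), and below $p_\xi$ we may interleave a winning strategy for II in $\Game_{\mrm{II}}(\dot{\mbb{S}}, \mu)$ from the start so that the $\dot{s}_\eta$'s form a partial play \emph{according to that strategy} on even $\dot{\mbb{S}}$-stages, guaranteeing the needed lower bound. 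Assembling (a), (b), and the contradiction from $\tau$ completes the argument. I would present the detailed parity-matching and the interleaving of the two strategies carefully, as that is where all the real content sits; everything else is bookkeeping.
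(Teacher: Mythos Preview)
Your proposal is correct and follows essentially the same approach as the paper: lift a run of $\Game_{\mrm{I}}(\mbb{P},\mu)$ to a run of $\Game_{\mrm{I}}(\mbb{P}*\dot{\mbbS},\mu)$ in which II follows the given winning strategy $\tau$, and use a (forced) winning strategy for II in $\Game_{\mrm{II}}(\dot{\mbbS},\mu)$ to supply the second coordinates at even stages, so that at every limit the $\dot{\mbbS}$-sequence has a lower bound and hence I's move in the product game is legal. One remark: your parity discussion is muddled (there is no shift --- in both $\Game_{\mrm{I}}$ games I plays at even stages and II at odd, and the point is precisely that the even/limit stages in the auxiliary $\dot{\mbbS}$-run are II's stages in $\Game_{\mrm{II}}(\dot{\mbbS},\mu)$), but you land on the right construction anyway.
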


\begin{proof}
Let $\tau$ be a winning strategy of II for $\Game_\mrm{I} ( \mbb{P} * \dot{\mbbS} , \mu )$
and $\dot{\sigma}$ be a $\mbb{P}$-name for a winning strategy of II for $\Game_\mrm{II} ( \dot{\mbbS} , \mu )$.

We describe a strategy of II for $\Game_\mrm{I} ( \mbb{P} , \mu )$.
Let $p_\xi$ denote the $\xi$-th move in this game.
For each $\xi$, II auxiliary chooses a $\mbb{P}$-name $\dot{s}_\xi$ so that
\begin{renumerate}
\item $( p_\xi , \dot{s}_\xi ) \in \mbb{P} * \dot{\mbbS}$,
and $( p_\xi , \dot{s}_\xi ) \leq ( p_\eta , \dot{s}_\eta )$ for all $\eta < \xi$,
\item $( p_\xi , \dot{s}_\xi ) = \tau ( \langle ( p_\eta , \dot{s}_\eta ) \mid \eta < \xi \rangle )$ if $\xi$ is odd,
\item $p_\xi \Vdash_\mbb{P} \lchon\, \dot{s}_\xi = \dot{\sigma} ( \langle \dot{s}_\eta \mid \eta < \xi \rangle ) \,\rchon$
if $\xi$ is even.
\end{renumerate}

Suppose $\xi$ is an odd ordinal $< \mu$, and I has chosen $p_{\xi - 1}$.
We also assume that $\dot{s}_\eta$ has been chosen for each $\eta < \xi -1$ as mentioned above.
Then II chooses $p_\xi \in \mbb{P}$ together with $\dot{s}_{\xi - 1} , \dot{s}_\xi$ as follows.
Note that $\xi -1$ is even.
Note also that, since $p_{\xi -1}$ is a lower bound of $\{ p_\eta \mid \eta < \xi -1 \}$,
it forces $\langle \dot{s}_\eta \mid \eta < \xi -1 \rangle$ to be moves in $\Game_\mrm{II} ( \dot{\mbbS} , \mu )$
in which II has moved according to a winning strategy $\dot{\sigma}$.
So $p_{\xi -1}$ forces that $\{ \dot{s}_\eta \mid \eta < \xi -1 \}$ has a lower bound in $\dot{\mbbS}$.
Let $\dot{s}_{\xi -1}$ be a $\mbb{P}$-name such that
\[
p_{\xi - 1} \Vdash_\mbb{P} \lchon\, \dot{s}_{\xi -1} = \dot{\sigma} ( \langle \dot{s}_\eta \mid \eta < \xi -1 \rangle ) \,\rchon \, .
\]
Then $p_{\xi -1}$ and $\dot{s}_{\xi -1}$ satisfy (i) -- (iii) above.
Next, let $p_\xi$ and $\dot{s}_\xi$ be such that
\[
( p_\xi , \dot{s}_\xi ) = \sigma ( \langle ( p_\eta , \dot{s}_\eta ) \mid \eta < \xi \rangle ) \, .
\]
Then $p_\xi$ and $\dot{s}_\xi$ satisfy (i) -- (iii) clearly.

Note that the above strategy of II for $\Game_\mrm{I} ( \mbb{P} , \mu )$ is a winning strategy:
Suppose $\xi$ is a limit ordinal, and $\langle p_\eta \mid \eta < \xi \rangle$ is an initial play of $\Game_\mrm{I} ( \mbb{P} , \mu )$
in which II has moved according to the above strategy.
For each $\eta < \xi$, let $\dot{s}_\eta$ be a $\mbb{P}$-name auxiliary chosen by II.
Then, by (ii), $\{ ( p_\eta , \dot{s}_\eta ) \mid \eta < \xi \}$ has a lower bound $( p , \dot{s} )$ in $\mbb{P} * \dot{\mbbS}$.
Then $p$ is a lower bound of $\{ p_\eta \mid \eta < \xi \}$ in $\mbb{P}$.
\end{proof}

Using the above lemma, we can prove the following.

\begin{lemma} \label{lem:4_models}
Suppose $W_0$, $W_1$, $W_2$ and $W_3$ are transitive models of $\ZFC$
with $W_0 \subseteq W_1 \subseteq W_2 \subseteq W_3$, and for each $i,j$ with $i < j < 4$,
$W_j$ is a forcing extension of $W_i$ by a poset $\mbbP_{ij} \in W_i$.
Let $\mu$ be a regular cardinal in $W_3$, and assume the following.
\begin{renumerate}
\item $\mbbP_{02}$ is strongly $\mu$-strategically closed in $W_0$.
\item $\mbbP_{13}$ is $\mu$-strategically closed in $W_1$.
\end{renumerate}
Let $G_{01}$ be a $\mbbP_{01}$-generic filter over $W_0$ with $W_1 = W_0 [ G_{01} ]$.
Then there is $p \in G_{01}$ such that $\mbbP_{01} \restrict p$ is strongly $\mu$-strategically closed.
\end{lemma}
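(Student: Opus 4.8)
The plan is to apply Lemma~\ref{lem:strong_str_closed_quotient}. I would realize the step from $W_0$ to $W_2$ as a two-step iteration $\mbbR := \mbbP_{01} * \dot{\mbbP}_{12}$, where $\dot{\mbbP}_{12}$ is a $\mbbP_{01}$-name for $\mbbP_{12}$, show via $\mbbP_{02}$ that a suitable restriction of $\mbbR$ is strongly $\mu$-strategically closed over $W_0$, show via $\mbbP_{13}$ that the matching restriction of the top factor is $\mu$-strategically closed over $W_1$, and then peel that factor off with Lemma~\ref{lem:strong_str_closed_quotient}. The place where real work is needed, and where Lemma~\ref{lem:projection} comes in, is that $\mbbR$ and $\mbbP_{02}$ yield the same extension $W_2$ of $W_0$ without being forcing equivalent; I expect this to be the main obstacle. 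Lemma~\ref{lem:projection}(2) resolves it by producing a \emph{dense embedding} between suitable restrictions, which is precisely what is needed, since a projection alone would transfer only the ordinary (not the strong) strategic closure, by Fact~\ref{fact:str_closed_basic}.

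First I would replace $\mbbP_{02}$ and $\mbbP_{13}$ by their Boolean completions; by Fact~\ref{fact:str_closed_basic}(1) these are still strongly $\mu$-strategically closed (over $W_0$) and $\mu$-strategically closed (over $W_1$) respectively, and produce the same extensions, so I may assume both are complete Boolean algebras. Then, working over $W_0$, I would set $\mbbR := \mbbP_{01} * \dot{\mbbP}_{12}$ and $G := G_{01} * G_{12}$, an $\mbbR$-generic over $W_0$ with $W_0[G] = W_2 = W_0[G_{02}]$. Since $G, G_{02} \in W_2$, Lemma~\ref{lem:projection}(2), with $W_0, \mbbP_{02}, \mbbR$ in place of $V, \mbbP, \mbbQ$, provides $q^* = (p^*, \dot{s}^*) \in G$ and a dense embedding $d \in W_0$ from $\mbbR \restrict q^*$, which with the usual conventions is $(\mbbP_{01}\restrict p^*) * (\dot{\mbbP}_{12}\restrict\dot{s}^*)$, to $\mbbP_{02}\restrict r^*$ for some $r^*$. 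As $\mbbP_{02}\restrict r^*$ is strongly $\mu$-strategically closed over $W_0$, Fact~\ref{fact:str_closed_basic}(1) gives that $(\mbbP_{01}\restrict p^*) * (\dot{\mbbP}_{12}\restrict\dot{s}^*)$ is strongly $\mu$-strategically closed over $W_0$; note $p^* \in G_{01}$ and $\dot{s}^*[G_{01}] \in G_{12}$.

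Next I would treat the top factor over $W_1$. Setting $\mbbR' := \mbbP_{12} * \dot{\mbbP}_{23}$ and $G' := G_{12} * G_{23}$, I have $W_1[G'] = W_3 = W_1[G_{13}]$ with $G', G_{13} \in W_3$, so Lemma~\ref{lem:projection}(2) over $W_1$ gives $q^\dagger = (a, \dot{b}) \in G'$ and a dense embedding from $(\mbbP_{12}\restrict a) * (\dot{\mbbP}_{23}\restrict\dot{b})$ into a restriction of $\mbbP_{13}$. By Fact~\ref{fact:str_closed_basic}(1) that iteration is $\mu$-strategically closed over $W_1$, and then by Fact~\ref{fact:str_closed_basic}(2) applied to its first-coordinate projection onto $\mbbP_{12}\restrict a$, the poset $\mbbP_{12}\restrict a$ is $\mu$-strategically closed over $W_1$; note $a \in G_{12}$.

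Finally I would combine the two. Since $a$ and $\dot{s}^*[G_{01}]$ both lie in the filter $G_{12}$, pick $\bar s \in G_{12}$ below both; then $\mbbP_{12}\restrict\bar s$, being $\mbbP_{12}\restrict a$ restricted to a condition, is $\mu$-strategically closed over $W_1$, and $\bar s \leq_{\mbbP_{12}} \dot{s}^*[G_{01}]$. Choosing a $\mbbP_{01}$-name $\dot{\bar s}$ for $\bar s$, the statements ``$\dot{\bar s} \leq_{\dot{\mbbP}_{12}} \dot{s}^*$'' and ``$\dot{\mbbP}_{12}\restrict\dot{\bar s}$ is $\mu$-strategically closed'' hold in $W_1 = W_0[G_{01}]$, so some $p \in G_{01}$ with $p \leq p^*$ forces both (and forces $\dot{\bar s} \in \dot{\mbbP}_{12}$). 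Then $(\mbbP_{01}\restrict p) * (\dot{\mbbP}_{12}\restrict\dot{\bar s})$ is the restriction of $(\mbbP_{01}\restrict p^*) * (\dot{\mbbP}_{12}\restrict\dot{s}^*)$ to the condition $(p, \dot{\bar s})$, hence is strongly $\mu$-strategically closed over $W_0$, while $1_{\mbbP_{01}\restrict p} = p$ forces $\dot{\mbbP}_{12}\restrict\dot{\bar s}$ to be $\mu$-strategically closed. Lemma~\ref{lem:strong_str_closed_quotient}, applied with $\mbbP := \mbbP_{01}\restrict p$ and $\dot{\mbbS} := \dot{\mbbP}_{12}\restrict\dot{\bar s}$, then yields that $\mbbP_{01}\restrict p$ is strongly $\mu$-strategically closed, and $p \in G_{01}$ is as required.
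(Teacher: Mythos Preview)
Your proof is correct and follows essentially the same route as the paper: transfer strong $\mu$-strategic closure to a restriction of $\mbbP_{01} * \dot{\mbbP}_{12}$ via a dense embedding from Lemma~\ref{lem:projection}(2), establish $\mu$-strategic closure of the second factor, and apply Lemma~\ref{lem:strong_str_closed_quotient}. The paper streamlines your argument slightly by using Lemma~\ref{lem:projection}(1) (rather than (2)) to obtain a projection from $\mbbP_{13}$ to $\mbbP_{12}$ directly, and by choosing the name $\dot{\mbbP}_{12}$ so that $1_{\mbbP_{01}}$ already forces it to be $\mu$-strategically closed, which eliminates the need for your final combining step.
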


\begin{proof}
We may assume each $\mbbP_{ij}$, except for $\mbbP_{01}$, is a complete Boolean algebra in $W_i$
by Fact \ref{fact:str_closed_basic} (1).
For each $i,j$ with $i < j < 4$ let $G_{ij}$ be a $\mbbP_{ij}$-generic filter over $W_i$
with $W_j = W_i [ G_{ij} ]$.

Note that $W_1 [ G_{12} ] = W_2 \subseteq W_3 = W_1 [ G_{13} ]$.
Then, by Lemma \ref{lem:projection} (1),
by restricting $\mbbP_{12}$ and $\mbbP_{13}$ to lower bounds of some conditions,
we may assume that in $W_1$ there is a projection from $\mbbP_{13}$ to $\mbbP_{12}$.
Then $\mbbP_{12}$ is $\mu$-strategically closed in $W_1$
by (ii) and Fact \ref{fact:str_closed_basic} (2).

Let $\dot{\mbbP}_{12}$ be a $\mbbP_{01}$-name of $\dot{\mbbP}_{12}$,
which $1_{\mbbP_{01}}$ forces to be a $\mu$-strategically closed poset.
Then $G_{01} * G_{12}$ is a $\mbbP_{01} * \dot{\mbbP}_{12}$-generic filter over $W_0$,
and $W_0 [ G_{01} * G_{12} ] = W_2 = W_0 [ G_{02} ]$.
By Lemma \ref{lem:projection} (2), we can take $p * \dot{q} \in G_{01} * G_{12}$
and $r \in G_{02}$ such that there is a dense embedding from
$\mbbP := ( \mbbP_{01} * \dot{\mbbP}_{12} ) \restrict ( p * \dot{q} )$ to $\mbbP_{02} \restrict r$ in $W_0$.
Then $\mbbP$ is strongly $\mu$-strategically closed by (i) and Fact \ref{fact:str_closed_basic} (1).
But $\mbbP = ( \mbbP_{01} \restrict p ) * ( \dot{\mbbP}_{12} \restrict \dot{q} )$,
and $p$ forces that $\dot{\mbbP}_{12} \restrict \dot{q}$ is $\mu$-strategically closed.
So $\mbbP_{01} \restrict p$ is strongly $\mu$-strategically closed
by Lemma \ref{lem:strong_str_closed_quotient}. Then $p$ is as desired.
\end{proof}

The following lemma implies 
Proposition \ref{prop:coll_unctble} easily.

\begin{lemma}\label{lem:nu+1}
 Assume $(\star)_{\mu, \kappa}$.
 Suppose $x \in \mcalH_\kappa^{V[G]}$ is nice in $V[G]$.
 Let $\nu \in (\mu, \kappa)$ be a regular cardinal with $\nu^{<\mu}=\nu$ in $V[x]$ and $x \in V[G_{\nu+1}]$.
 Then, $V[G_{\nu+1}]$ is an extension of $V[x]$ by $\coll(\mu, {<}\ \nu+1)$.
\end{lemma}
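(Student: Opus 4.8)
The plan is to reduce the conclusion to Eskew's Fact~\ref{fact:coll_equivalent}. Concretely, letting $\mbbQ\in V[x]$ be a quotient forcing realizing $V[G_{\nu+1}]$ as a forcing extension of $V[x]$, I will show that, after passing below a suitable condition in the generic, $\mbbQ$ is strongly $\mu$-strategically closed, has size $\nu$ up to forcing equivalence, and collapses $\nu$ onto $\mu$; Fact~\ref{fact:coll_equivalent} then identifies it with $\coll(\mu,\nu)$, which in turn is forcing equivalent to $\coll(\mu,{<}\nu+1)$ by the same fact (the latter being $\mu$-closed of size $\nu^{<\mu}=\nu$ and collapsing $\nu$ to $\mu$).

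First I would fix the quotient. Since $\nu+1<\kappa$ and $\kappa$ is inaccessible, $\mbbB:=\mbbB(\coll(\mu,{<}\nu+1))\in\mcalH_\kappa^V$; as $x\in V[G_{\nu+1}]$ there is a $\coll(\mu,{<}\nu+1)$-name $\dot x$ with $x=\dot x[G_{\nu+1}]$, and then $\mbbB(\dot x)$ is a complete subalgebra of $\mbbB$ with $V[x]=V[G_{\nu+1}\cap\mbbB(\dot x)]$. Hence $V[x]$ is a set-generic extension of $V$, and $V[G_{\nu+1}]$ is a set-generic extension of $V[x]$ by $\mbbQ:=\mbbB/(G_{\nu+1}\cap\mbbB(\dot x))\in V[x]$, say $V[G_{\nu+1}]=V[x][G_\mbbQ]$. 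Since $\coll(\mu,{<}\nu+1)$ is $\nu^+$-c.c., $\mbbQ$ is $\nu^+$-c.c.\ in $V[x]$; together with the hypothesis $\nu^{<\mu}=\nu$ in $V[x]$ (and the fact that $\mbbQ$ is eventually seen to be $\mu$-strategically closed, hence $<\mu$-distributive), a routine antichain argument shows $\mbbQ$ is forcing equivalent to a poset of size $\nu$ in $V[x]$. Also $V[G_{\nu+1}]\models|\nu|=\mu$, so $\mbbQ$ collapses $\nu$ onto $\mu$. Thus the only point that remains is the strong $\mu$-strategic closure of a restriction of $\mbbQ$.

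For this I would apply Lemma~\ref{lem:4_models} to the tower $W_0=V[x]\subseteq W_1=V[G_{\nu+1}]\subseteq W_2=V[x][g]\subseteq W_3=V[G]$, where $g$ witnesses the niceness of $x$ for the parameter $y=$ (a set of ordinals coding) $G_{\nu+1}$; here $y\in\mcalH_\kappa^{V[G]}$ because $|\coll(\mu,{<}\nu+1)|\leq\nu<\kappa$. So $g$ is $\mbbP$-generic over $V[x]$ for some $\mbbP\in\mcalH_\kappa^{V[x]}$ that is $\mu$-closed in $V[x]$, $g\in V[G]$, and $G_{\nu+1}\in V[x][g]$, which gives the inclusions $W_0\subseteq W_1\subseteq W_2\subseteq W_3$. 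All six pairs are forcing extensions: $W_1$ over $W_0$ via $\mbbQ$; $W_2$ over $W_0$ via $\mbbP$, which is $\mu$-closed, hence strongly $\mu$-strategically closed, so hypothesis~(i) of Lemma~\ref{lem:4_models} holds; $W_3$ over $W_1$ via $\coll(\mu,[\nu+1,\kappa))$, which is $\mu$-closed in $V[G_{\nu+1}]$, hence $\mu$-strategically closed, so hypothesis~(ii) holds; and $W_2$ over $W_1$, $W_3$ over $W_2$, $W_3$ over $W_0$ are forcing extensions by Lemma~\ref{lem:projection}(1) applied over the relevant base model, together with the standard fact that any intermediate model $V'[a]$ with $a$ a set of ordinals, sitting inside a set-generic extension of $V'$, is itself a set-generic extension of $V'$. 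Lemma~\ref{lem:4_models} then produces $p\in G_\mbbQ$ with $\mbbQ\restrict p$ strongly $\mu$-strategically closed in $V[x]$.

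Putting this together: $\mbbQ\restrict p$ is strongly $\mu$-strategically closed, $\nu^+$-c.c., of size $\nu$ up to forcing equivalence, and collapses $\nu$ onto $\mu$ (since $V[x][G_\mbbQ\restrict p]=V[x][G_\mbbQ]=V[G_{\nu+1}]\models|\nu|=\mu$), so by Fact~\ref{fact:coll_equivalent} it is forcing equivalent to $\coll(\mu,\nu)$, hence to $\coll(\mu,{<}\nu+1)$; therefore $V[G_{\nu+1}]=V[x][G_\mbbQ\restrict p]$ is a $\coll(\mu,{<}\nu+1)$-generic extension of $V[x]$. The step I expect to be the main obstacle is the correct orchestration of Lemma~\ref{lem:4_models}: one must interpose, between $V[G_{\nu+1}]$ and $V[G]$, a model that is a $\mu$-closed extension of $V[x]$ — and this is exactly what niceness provides, via $V[x][g]$ — so that hypotheses~(i) and~(ii) are met simultaneously; checking the remaining intermediate-model and quotient bookkeeping (with the attendant restrictions to conditions) and the size estimate for $\mbbQ$ are the secondary technical points.
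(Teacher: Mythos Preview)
Your proposal is correct and follows essentially the same route as the paper: realize $V[G_{\nu+1}]$ as a quotient-forcing extension of $V[x]$, use niceness to place a $\mu$-closed extension $V[x][g]$ between $V[G_{\nu+1}]$ and $V[G]$, apply Lemma~\ref{lem:4_models} to the tower $V[x]\subseteq V[G_{\nu+1}]\subseteq V[x][g]\subseteq V[G]$ to obtain strong $\mu$-strategic closure below a condition, and then invoke Fact~\ref{fact:coll_equivalent}. The only cosmetic difference is that the paper uses the projection-quotient $(\coll(\mu,{<}\nu+1)/x)^f$ from Lemma~\ref{lem:projection}(1), which is literally a subset of $\coll(\mu,{<}\nu+1)$ and hence has size $\nu^{<\mu}=\nu$ in $V[x]$ immediately, whereas your Boolean-algebra quotient needs the extra (easy) observation that it has a dense subset of that size---your ``routine antichain argument'' is really just this.
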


\begin{proof}
    We may assume $x$ is an $\mbb{S}$-generic filter over $V$ for some $\mbb{S} \in \mcalH_\kappa$
which is a complete Boolean algebra in $V$.
Note that $1_{\coll ( \mu , {<} \nu + 1 )}$ forces the existence
of $\mbbS$-generic filter over $V$ by the homogeneity of
$\coll ( \mu , {<}\  \nu + 1 )$.
Then, by Lemma \ref{lem:projection} (1),
there are $s \in x$ and a projection
$f \in V$ from $\coll(\mu, {<}\ \nu + 1)$
to $\mbb{S} \restrict s$ such that $x \restrict s$
is generated by $f[ G_{\nu + 1} ]$.
By replacing $\mbbS$ and $x$ with $\mbbS \restrict s$ and $x \restrict s$
if necessary, we may assume $s = 1_\mbbS$.

In $V[x]$ let
\[
\mbb{P} := ( \coll(\mu, {<}\  \nu + 1 ) \, / \, x )^f
\, .
\]
Then we have the following.
\begin{renumerate}
\item $G_{\nu + 1}$ is a $\mbbP$-generic filter over $V[x]$, and
$V[ G_{\nu + 1} ] = V[x][ G_{\nu + 1} ]$.
\item $| \mbb{P} | = \nu$ in $V[x]$.
\end{renumerate}
(ii) is because $\mbb{P} \subseteq \coll(\mu, {<}\  \nu + 1 )$,
and $| \coll(\mu, {<}\  \nu + 1) | = \nu^{< \mu} = \nu$ in $V[x]$.

We claim the following.
\begin{renumerate}
\addtocounter{enumi}{2}
\item $\mbbP \restrict p$ is strongly $\mu$-strategically closed
in $V[x]$ for some $p \in G_{\nu +1}$.
\end{renumerate}
Since $x$ is nice, there is a poset $\mbb{Q} \in \mcalH_\kappa^{V[x]}$
and a $\mbb{Q}$-generic filter $H$ over $V[x]$ such that
$\mbb{Q}$ is $\mu$-closed in $V[x]$,
and $G_{\nu + 1} \in V[x][H]$.
Then, if we let
\begin{gather*}
W_0 := V[x] \, , \ \ W_1 := V[ G_{\nu + 1} ] \, , \ \ W_2 := V[x][H] \, , \ \ W_3 := V[G] \, , \\
\mbbP_{01} := \mbbP \, , \ \ G_{01} := G_{\nu + 1} \, ,
\end{gather*}
the assumptions of Lemma \ref{lem:4_models} are satisfied.
So (iii) holds by Lemma \ref{lem:4_models}.

Take $p$ as (iii).
By (i), $V[ G_{\nu + 1} ]$ is an extension of $V[x]$
by $G_{\nu + 1} \restrict p \subseteq \mbb{P} \restrict p$.
Here note that, by (ii), (iii) and Fact \ref{fact:coll_equivalent},
\[
\mbb{P} \restrict p ~ \sim ~ \coll ( \mu , \nu ) ~ \sim ~
\coll ( \mu , {<}\  \nu + 1 )
\]
in $V[x]$, where $\sim$ is the forcing equivalence.
So $V[ G_{\nu + 1} ]$ is an extension of $V[x]$ by $\coll ( \mu , {<}\ \nu + 1 )$.
\end{proof}
\noindent\textit{Proof of Proposition 4.2}.
Suppose $x \in \mathcal{H}_\kappa^{V[G]}$ is nice in $V[G]$.
Take $\nu < \kappa$ such that $x \in V[G_{\nu+1}]$ and $\nu$ is a regular cardinal with $\nu^{<\mu}=\nu$ in $V[x]$.
By Lemma \ref{lem:nu+1}, $V[G_{\nu+1}]$ is an extension of $V[x]$ by $\coll(\mu, {<}\  \nu+1)$.
Since $V[G]$ is an extension of $V[G_{\nu+1}]$ by $\coll(\mu, [\nu+1, \kappa))$ and 
$\coll(\mu, {<}\ \nu+1) \times \coll(\mu, [\nu+1, \kappa))$ is equivalent to $\coll(\mu, {<}\kappa)$, 
we can take a $\coll(\mu, {<}\kappa)$-generic filter $G'$ over $V[x]$ such that $V[G]=V[x][G']$.
  \hfill$\square$

\subsection{Perfect set property in the generalized Solovay model}\label{sec:psp_genSolovay}

Schlicht \cite{MR3743612} proved the following fact.

\begin{fact}[Theorem 2.19 in \cite{MR3743612}]\label{fact:psp}
  Assume $(\ast)_{\mu, \kappa}$ and $\omega<\mu$.
  Then in $V[G]$, every subset of $\mu^\mu$ that is definable from an element of ${}^\mu V$ has the perfect set property.
\end{fact}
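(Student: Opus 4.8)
The plan is to carry out a Solovay-style argument for the perfect set property of definable sets, in the form appropriate to the generalized Baire space: since the factor lemma of Section 3 may fail when $\mu > \omega$ (a quotient of a $\mu$-closed forcing need not be $\mu$-closed), I would argue directly with a name for $A$ via a fusion, rather than by localizing to $V[x]$ for $x \in A$. First I would reduce to a ground-model parameter. If $A \subseteq \mu^\mu$ is definable in $V[G]$ from a parameter $r \in ({}^\mu V)^{V[G]}$, then since $r$ is essentially a $\mu$-sequence, standard properties of the L\'{e}vy collapse give $\xi < \kappa$ with $r \in V[G_\xi]$; as $\kappa$ remains inaccessible over $V[G_\xi]$ and $\coll(\mu,[\xi,\kappa)) \cong \coll(\mu,{<}\kappa)$, I may replace $V$ by $V[G_\xi]$ and assume $A$ is definable in $V[G]$ from some $v \in V$. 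Fix a formula $\varphi$ and a $\coll(\mu,{<}\kappa)$-name $\dot A \in V$ with $1 \Vdash \dot A = \{ x \in \mu^\mu \mid \varphi(\check v, x) \}$; recall $\kappa = \mu^+$ in $V[G]$.

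Next I would set up a dichotomy in $V$, using the factorization $\coll(\mu,{<}\kappa) \cong \coll(\mu,{<}\nu) \times \coll(\mu,[\nu,\kappa))$ for cardinals $\nu$ with $\mu < \nu < \kappa$. Call a condition $p$ \emph{slim} if for some such $\nu$, $p$ forces that every element of $\dot A$ already lies in the $\coll(\mu,{<}\nu)$-part of the extension; a slim $p$ forces $|\dot A| \le \mu$, since the generalized Baire space of a $\coll(\mu,{<}\nu)$-extension has size $< \kappa$, hence size $\le \mu$ in $V[G]$. Call $p$ \emph{perfect} if below $p$ one can build, by a fusion of length $\mu$, a $<\mu$-closed everywhere-splitting tree of conditions $\langle q_s \mid s \in 2^{<\mu} \rangle$ with names $\langle \dot x_s \mid s \in 2^{<\mu} \rangle$ such that $q_s \Vdash \dot x_s \in \dot A$, the $\dot x_s$ cohere and converge along every branch to a real, distinct branches receive distinct reals which are Cohen-generic over $V$ (i.e.\ $\add(\mu,1)$-generic), and --- by the symmetry of the construction and the homogeneity of the collapse, with $v \in V$ --- whether the real of a branch lies in $\dot A$ depends only on the branch. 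The key combinatorial point is that the set of conditions that are slim or perfect is dense: if $p$ is not slim, then for each $\nu$ some extension of $p$ forces an element of $\dot A$ that genuinely depends on the $\coll(\mu,[\nu,\kappa))$-part, and this supply of ``arbitrarily late'' elements of $\dot A$ is exactly what lets the fusion keep finding splitting extensions, producing a perfect $q \le p$.

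In $V[G]$ the generic $G$ meets this dense set. If it meets a slim condition, then $|A| \le \mu$. If it meets a perfect condition $p$, with associated system $\langle q_s \rangle$, $\langle \dot x_s \rangle$, then in $V[G]$ the branch-reals $\{ x_b \mid b \in 2^\mu \}$ form a perfect subset of $\mu^\mu$ consisting of pairwise distinct reals Cohen-generic over $V$; the branch selected by $G$ has its real in $A$, so by branch-independence every $x_b$ lies in $A$, and $A$ has a perfect subset. This gives the dichotomy.

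I expect the fusion in the definition of a perfect condition to be the main obstacle, and the reason $\omega < \mu$ is isolated rather than covered by Solovay's original argument: for $\mu = \omega$ the conditions have finite support, every tree of conditions is automatically closed, and the construction is classical; for $\omega < \mu$ one must build genuinely $<\mu$-closed everywhere-splitting trees of conditions, keep the lengths of the $q_s$ and the stems of the $\dot x_s$ coherent through the $\mu$ limit stages (this is where $\mu^{<\mu} = \mu$ and the $<\mu$-closure of the collapse enter), and check that the $2^\mu$ resulting branch filters are Cohen-generic over $V$. This coherence bookkeeping is the technical heart of Schlicht's proof.
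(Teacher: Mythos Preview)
Your proposal has a genuine gap at the step where you claim that ``whether the real of a branch lies in $\dot A$ depends only on the branch'' by ``symmetry of the construction and homogeneity of the collapse.'' For the branch $b_G$ that lies along $G$, the conditions $q_{b_G \upharpoonright \alpha}$ are in $G$ and you do get $x_{b_G} \in A$. For any other branch $b$, however, the conditions $q_{b\upharpoonright\alpha}$ are \emph{not} in $G$, and homogeneity of $\coll(\mu,{<}\kappa)$ only tells you that $\varphi(v,x)$ is absolute between collapse extensions for $x$ in the \emph{ground model}; it says nothing about $x_b \in V[G]\setminus V$. To conclude $V[G]\models\varphi(v,x_b)$ you need to know that $V[G]$ is itself a $\coll(\mu,{<}\kappa)$-extension of $V[x_b]$, so that homogeneity can be applied over $V[x_b]$. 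That is exactly the factoring property you said you were trying to avoid. A symmetric-tree/automorphism argument does not help either: an automorphism moving $b_G$ to $b$ moves $G$ to a different generic $G'$, and while $V[G']=V[G]$, there is no reason the name for the branch real evaluates to $x_b$ under $G'$ unless the construction is arranged with a rigidity you have not described.

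The paper's (i.e.\ Schlicht's) approach confronts this head-on rather than circumventing it. Instead of a hand-built fusion on collapse conditions, one forces with the perfect-tree poset $\mathbb{P}_\mu$ (forcing equivalent to $\add(\mu,1)$), which adds a single perfect tree $T_k$; the branches of $T_k$ in $V[G]$ then play the role of your $x_b$. The substantive work is Fact~\ref{lem:gen2.18}: every branch $x$ of $T_k$ is $\add(\mu,1)$-generic over $V[G_\nu]$, and the quotient of the full forcing by $x$ is $\mu$-closed. From this one shows (using Fact~\ref{fact:coll_equivalent}) that $V[G]$ is a $\coll(\mu,{<}\kappa)$-extension of $V[G_\nu][x]$ for \emph{every} branch $x$, which is precisely what is needed to transfer membership in $A$ from the generic branch to all branches. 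The paper even notes that Schlicht's original write-up had a minor gap at exactly this point. So the ``technical heart'' is not the fusion bookkeeping you anticipate, but the quotient analysis that yields factoring for all branches; your outline omits this and does not replace it with anything.
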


We will use this fact and a part of its proof.
Here we briefly review its proof.
The following poset plays a central role in the proof of Fact \ref{fact:psp} in \cite{MR3743612}.

 \begin{definition}\label{def:perfect_tree}
  Let $\mbb{P}_\mu$ denote the set of pairs $(t, s)$ such that
  \begin{aenumerate}
    \item $t \subseteq \mu^{<\mu}$ is a tree of size less than $\mu$.
    \item every node $u\in t$ has at most two direct successors in $t$.
    \item $s \subseteq t$ and if $u\in t$ is non-terminal in $t$, then $u\in s$ if and only if $u$ has exactly one direct successor in $t$. 
  \end{aenumerate}
  Let $(t, s) \leq_{\mbb{P}_\mu} (u, v)$ if $u\subseteq t$ and $s \cap u =v$.
 \end{definition}

 $\mbb{P}_\mu$ is a separative poset which adds a perfect binary splitting subtree of $\mu^{<\mu}$.
 For $G$ a $\mbbP_\mu$-generic filter over $V$,
 we write $T_G = \bigcup_{(t,s)\in G} t$.
 By Fact \ref{fact:cohen_equiv}, if $\mu^{<\mu}=\mu$ then $\mbb{P}_\mu$ is forcing equivalent to $\add(\mu, 1)$.

 The proof in \cite{MR3743612} of Fact \ref{fact:psp} contains a minor gap in the last part. We see the outline of a fixed proof.
 We need the following fact, which slightly generalizes Lemma 2.18 in \cite{MR3743612}. This fact can be established by almost the same argument as the original one.

\begin{fact}\label{lem:gen2.18}
  Suppose $\mbbS$ is a separative $\mu$-closed poset and $\gamma < \mu$.
  Let $G \times H$ be a $\mbbP_\mu \times \mbbS$-generic filter over $V$.
  Then, in $V[G \times H]$,
   each sequence $\langle x_i \mid i < \gamma \rangle$ of distinct branches of $T_G$ is $\add (\mu, \gamma)$-generic over $V[H]$ and $V[G \times H]$ is an extension of $V[\langle x_i \mid i < \gamma \rangle]$ by a $\mu$-closed forcing.
 \end{fact}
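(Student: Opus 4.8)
The plan is to absorb the side forcing $\mbbS$ and then run the standard amalgamation (fusion) argument for the perfect-tree forcing $\mbbP_\mu$, essentially as in \cite{MR3743612}. Since $\mbbS$ is $\mu$-closed it adds no new ${<}\mu$-sequences of ordinals, so $\mu^{<\mu}$, and hence the posets $\mbbP_\mu$ and $\add(\mu,\gamma)$ (whose conditions are objects of size ${<}\mu$ built from $\mu^{<\mu}$), are the same in $V$ and in $V[H]$, and $\mbbP_\mu$ --- which is $\mu$-closed --- stays $\mu$-closed in $V[H]$. By mutual genericity for product forcing, $G$ is $\mbbP_\mu$-generic over $V[H]$, $H$ is $\mbbS$-generic over $V[G]$, and $V[G\times H]=V[H][G]$. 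Hence it suffices to prove the following for an arbitrary transitive model $W$ of $\ZFC$: if $G$ is $\mbbP_\mu$-generic over $W$ and $\langle x_i\mid i<\gamma\rangle\in W[G]$ is a sequence of distinct branches of $T_G$, then $\langle x_i\mid i<\gamma\rangle$ is $\add(\mu,\gamma)$-generic over $W$ and $W[G]$ is an extension of $W[\langle x_i\mid i<\gamma\rangle]$ by a $\mu$-closed forcing. Taking $W:=V[H]$ then yields both clauses of the Fact.

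For the local claim, work in $W[G]$. Since $T_G$ is perfect it has no terminal nodes, so each $x_i\in\mu^\mu$; and since $\gamma<\mu$ with $\mu$ regular there is $\delta<\mu$ with $x_i\restrict\delta\ne x_j\restrict\delta$ whenever $i\ne j<\gamma$. Fix $p=(t,s)\in G$ whose tree $t$ has height $\eta_0>\delta$; the nodes $x_i\restrict\eta_0$ ($i<\gamma$) are then pairwise distinct members of $t$, and we use them to designate, below $p$, which branch of $T_G$ is the ``$i$-th'' one. Identifying $\add(\mu,\gamma)$ with its restriction below $\langle x_i\restrict\eta_0\mid i<\gamma\rangle$, define $\pi\colon\mbbP_\mu\restrict p\to\add(\mu,\gamma)$ by letting $\pi(t',s')(i)$ be the longest initial segment of the $i$-th designated branch decided by $(t',s')$, i.e.\ the largest $\rho$ with $x_i\restrict\eta_0\subseteq\rho$ and $(t',s')\Vdash\rho\subseteq\dot x_i$ (where $\dot x_i$ names the $i$-th branch). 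Then $\pi$ is order-preserving and $\pi(p)=\langle x_i\restrict\eta_0\mid i<\gamma\rangle$.

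The substantive point is that $\pi$ is a projection onto a dense subset of $\add(\mu,\gamma)$: given $(t',s')\le p$ and $\langle\sigma_i\mid i<\gamma\rangle\le\pi(t',s')$ in $\add(\mu,\gamma)$, one builds $(t'',s'')\le(t',s')$ with $\pi(t'',s'')\le\langle\sigma_i\mid i<\gamma\rangle$ by, for each $i$, extending the tree so as to reveal the splitting nodes of $T_G$ along the $i$-th branch below $\sigma_i$, choosing at each of them the successor through which $\sigma_i$ passes, and marking the resulting stretches --- which consist of new nodes, hence eligible --- as non-splitting; one verifies that $(t'',s'')$ still satisfies clause (3) of Definition \ref{def:perfect_tree} and gives no node a third successor, and that it forces $\sigma_i\subseteq\dot x_i$ for each $i$. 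Since projections pull dense sets back to dense sets, for every dense $D\subseteq\add(\mu,\gamma)$ of $W$ the set of $(t',s')\le p$ with $\pi(t',s')$ in the downward closure of $D$ is dense, so $G$ meets it and the filter on $\add(\mu,\gamma)$ generated by $\{\langle x_i\restrict\alpha\mid i<\gamma\rangle\mid\alpha<\mu\}$ is $W$-generic; moreover $W[G]$ is, via $\pi$, an extension of $W[\langle x_i\mid i<\gamma\rangle]$ by the quotient forcing $(\mbbP_\mu\restrict p)/\langle x_i\mid i<\gamma\rangle$. That quotient is $\mu$-closed in $W[\langle x_i\mid i<\gamma\rangle]$: a descending ${<}\mu$-sequence of its conditions has as a lower bound the pair consisting of the union of the tree parts (of size ${<}\mu$, since $\mu$ remains regular there) together with the coherent union of the marking parts, and this pair is still consistent with the $x_i$ because all the heights involved stay ${<}\mu$.

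The main obstacle is precisely this amalgamation step: advancing the $\gamma$ designated branches of a given condition so as to meet a prescribed condition of $\add(\mu,\gamma)$ while keeping the resulting $(t'',s'')$ a genuine element of $\mbbP_\mu$ --- in particular respecting clause (3) of Definition \ref{def:perfect_tree} about exactly which nodes are marked in $s''$ (which is why one must work through \emph{new} nodes and never re-mark an old one) and never giving a node three successors --- and checking that this goes through uniformly for \emph{every} sequence of $\gamma$ distinct branches of $T_G$, which is what forces the preliminary passage to a cone below a condition $p\in G$ that separates and labels them. The remaining ingredients --- mutual genericity for products, absoluteness of $\mbbP_\mu$ and $\add(\mu,\gamma)$ under $\mu$-closed forcing, the $\mu$-closedness of $\mbbP_\mu$, and the standard theory of projections and quotient forcings --- are as collected in \S\ref{sec:preliminaries}.
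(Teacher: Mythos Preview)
Your reduction ``take $W:=V[H]$ and prove the $\mbbP_\mu$-only statement'' does not deliver the second clause of the Fact. Your local claim yields that $W[G]=V[G\times H]$ is a $\mu$-closed forcing extension of $W[\langle x_i\rangle]=V[H][\langle x_i\rangle]$, whereas the Fact asks for a $\mu$-closed extension of $V[\langle x_i\rangle]$. These are different ground models, and the sentence ``Taking $W:=V[H]$ then yields both clauses of the Fact'' is false as written. The repair is short but must be said: from $\add(\mu,\gamma)$-genericity of $\langle x_i\rangle$ over $V[H]$ you get that $H\times\langle x_i\rangle$ is $\mbbS\times\add(\mu,\gamma)$-generic over $V$, hence $H$ is $\mbbS$-generic over $V[\langle x_i\rangle]$; and since $\add(\mu,\gamma)$ is $\mu$-closed, $\mbbS$ remains $\mu$-closed in $V[\langle x_i\rangle]$. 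Thus $V[H][\langle x_i\rangle]=V[\langle x_i\rangle][H]$ is a $\mu$-closed extension of $V[\langle x_i\rangle]$, and composing with your $\mu$-closed quotient step gives the clause via a two-step $\mu$-closed iteration.

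The paper's sketch is organized differently precisely to avoid this reassembly: it never absorbs $\mbbS$, but works with $\mbbP_\mu\times\mbbS$ as a single poset, takes a $\mbbP_\mu\times\mbbS$-name $\dot\sigma$ for the branch sequence, passes to a dense subposet $(\mbbP_\mu\times\mbbS)^*$ of conditions deciding $\dot\sigma$ up to their tree height, and builds posets $\mbbQ_0\subseteq\mbbQ$ with a projection $\pi\colon\mbbQ\to\mbbQ_0$ so that $\mbbB(\mbbQ_0)=\mbbB(\dot\sigma)$ and the single quotient $(\mbbQ/\dot G_0)^\pi$ is visibly $\mu$-closed in $V[\langle x_i\rangle]$ --- the $\mbbS$-coordinate is carried along inside the conditions rather than factored out. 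The underlying amalgamation idea is the same as yours, but by keeping the product intact the paper lands directly over $V[\langle x_i\rangle]$ in one step.
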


 \noindent\textit{Sketch of Proof}.
 We omit the details here and see only an outline.

 By Lemma 2.9 in \cite{MR3743612},
 $\langle x_i \mid i<\gamma \rangle$ is $\add(\mu, \gamma)$-generic over $V$.
 
 Let $\dot{\sigma}$ be a $\mbbP_\mu \times \mbbS$-name for a sequence in $[T_G]^{V[G][H]}$ of length $\gamma$. 
 Although $\dot{\sigma}$ is in fact a name of an element of $(\mu^\mu)^\gamma$,
 considering it as a name of an element of $\mu^{\mu \times \gamma}$,
 we can define $\mbbB(\dot{\sigma})$.
 Let $\dot{G}_{\dot{\sigma}}$ be the canonical $\mbbB(\dot{\sigma})$-name for a $\mbbB(\dot{\sigma})$-generic filter.  
 We show that $\mbbB(\mbbP_\mu \times\mbbS) / \dot{G}_{\dot{\sigma}}$ is forcing equivalent to a $\mu$-closed forcing.

 Let $\dot{b}_i$ be a $\mbb{P}_\mu$-name for $\dot{\sigma}(i)$.
 For a $\mbbP_\mu$-name $\dot{b}$ for a branch of $T_G$ and 
 $(p, r) \in \mbbP_\mu \times \mbbS$, let
 \begin{center}
 $\dot{b}_{(p, r)} := \{(\alpha, \beta) \in \Ord\times \Ord \mid (p, r) \Vdash \dot{b}(\alpha)=\beta\}$
 \end{center}
 and $\dot{\sigma}_{(p,r)} = \langle \dot{b}_{i_{(p ,r)}} \mid i <\gamma  \rangle$.

  Let ${(\mbbP_\mu \times \mbbS)}^*$ denote the set of $(p, r) = ( (t, s ), r)\in \mbbP_\mu \times \mbbS$ 
  such that $l(t)$ is a limit ordinal and 
  $\dom({\dot{b}_{i_{(p ,r)}}}) = \height(t)$ for each1 $i<\gamma$.
 Then $(\mbbP_\mu \times \mbbS)^*$ is separative and dense in $\mbbP_\mu \times \mbbS$. 
  Let
  \begin{center}
      $\mbbQ_0 = \{ (\dot{\sigma}_{({p}, r)}, \mathbbm{1}_{\mbbP}, {\mathbbm{1}}_{\mbbS} ) \mid 
      (p, r) \in ( {\mbbP_{\mu}} \times \mbbS)^* \}$,

      $\mbbQ_1 = \{ (\dot{\sigma}_{(p, r)}, p, r ) \mid 
      (p, r) \in ( {\mbbP_{\mu}} \times \mbbS)^* \}$, 

      $\mbbQ = \mbbQ_0 \cup \mbbQ_1$
  \end{center}
  and for all $(u,p,r), (u', p', r') \in \mbbQ$, let $(u,p,r) \leq_{\mbbQ} (u', p', r')$ 
  if $u' \subseteq u$ and 
  $(p, r) \leq_{\mbbP_\mu \times \mbbS} (p', r')$.
  $\leq_{\mbbQ_i}$ is the restriction of $\leq_{\mbbQ}$ on $\mbbQ_i$.

  Note that $(p, r) \mapsto (\dot{\sigma}_{(p,r)}, p,r)$ is an isomorphism from $( {\mbbP_{\mu}} \times \mbbS)^*$ to 
  $\mbbQ_1$.

  By the same proof as Lemma 2.13 in \cite{MR3743612}, we can show that $\mbbQ_0$ is a complete subforcing of $\mbbQ$.
  
  Define $\pi \colon \mbbQ \rightarrow \mbbQ_0$ by 
  $\pi (\dot{\sigma}_{(p, r)}, p, r ) = (\dot{\sigma}_{(p, r)}, \mathbbm{1}_{\mbbP}, {\mathbbm{1}}_{\mbbS} )$.
  Then by the same proof as Lemma 2.12 in \cite{MR3743612}, we can show that $\pi$ is a projection.

  Since $(\mbbP_\mu \times \mbbS)^*$ is dense in ${\mbbP_\mu \times \mbbS}$,  $(\mbbP_\mu \times \mbbS)^*$ is isomorphic to $\mbbQ_1$ and $\mbbQ_1$ is dense in $\mbbQ$,
  we can take a $\mbbQ$-name $\dot{\sigma}_{\mbbQ}$ corresponding to $\dot{\sigma}$.

  Let $\dot{G}_0$ be the canonical $\mbbQ_0$-name for a $\mbbQ_0$-generic filter.

  We can show that $\mbbB(\dot{\sigma}_\mbbQ)$, the complete Boolean subalgebra of $\mbbB(\mbbQ)$ induced by $\dot{\sigma}_\mbbQ$,
  is equal to $\mbbB(\mbbQ_0)$, the complete Boolean subalgebra of $\mbbB(\mbbQ)$ generated by $\mbbQ_0$.
  Hence it suffices to show that 
  $(\mbbQ / \dot{G}_0)_{\id}$ is $\mu$-closed.
  It holds that the quotient $(\mbbQ / \dot{G}_0)_{\id}$ by a complete subforcing is equal to $(\mbbQ / \dot{G}_0)^\pi$, the quotient by a projection $\pi$.

  To see that $(\mbbQ / \dot{G}_0)^\pi$ is $\mu$-closed, we take 
  $G_0$ a $\mbbQ_0$-generic filter over $V$ arbitrary.
  By the definition of $\pi$,
  \begin{center}
    $(\mbbQ / G_0)^\pi =
    \{(\dot{\sigma}_{(p,r)}, p, r ) \in \mbbQ \mid (\dot{\sigma}_{(p,r)}, {\mathbbm{1}_{\mbbP_\mu}}, \mathbbm{1}_\mbbS) \in G_0\}$.
  \end{center}
  It follows that the above set is $\mu$-closed from the $\mu$-closedness of $\mbbQ$.
  \hfill$\square$

  \vspace{0.5\baselineskip}
  
 \noindent\textit{Sketch of Proof of Fact \ref{fact:psp}}.
 We work in $V[G]$. Suppose that $z \in {}^\mu V$ and
\begin{center}
  $A  = (A^\mu_{\varphi, z})^{V[G]} = \{x \in (\mu^\mu)^{V[G]} \mid V[G]\models \varphi(x,z)\}$
\end{center}
has size $\mu^+$.
We show that $A$ contains a perfect subset.

We can take $\nu < \kappa$  with $z \in V[G_{\nu}]$ and an $\add(\mu, 1)\times \coll(\mu, {<}\kappa)$-generic filter $g \times h$ over $V[G_{\nu}]$ such that 
\begin{enumerate}
    \item $V[G]=V[G_{\nu}][g\times h]$,
    \item $A \cap V[G_{\nu}] \subsetneq A \cap V[G_{\nu}][g]$.
\end{enumerate}

In $V[G_{\nu}]$, we take an $\add(\mu, 1)$-name $\dot{\sigma}$ such that
\begin{enumerate}
  \item $\Vdash_{\add(\mu, 1)} ``\dot{\sigma} \notin V[G_{\nu}]$",
  \item $\Vdash_{\add(\mu, 1)}``\Vdash_{\coll(\mu, {<} \kappa)} \varphi(\check{\dot{\sigma}}, z)''$.
\end{enumerate}

Recall that $\mbb{P}_\mu$ is forcing equivalent to $\add ( \mu , 1 )$. Let $k$ be a $\mbb{P}_\mu$-generic filter over $V[G_{\nu} \times h]$ with 
\begin{center}
    $V[G_{\nu}\times k \times h] = 
    V[G_{\nu}\times g \times h]$.
\end{center}

 By Fact \ref{lem:gen2.18}, every branch $x$ of $T_{k}$ in $V[G]$ is $\add(\mu, 1)$-generic over $V[G_{\nu} \times h]$, and so it is generic over $V[ G_\nu ]$.
 We show that $\dot{\sigma}[x] \in A$ for each branch $x\in [T_k]$ in $V[G]$.
 Then
 \begin{center}
 $\{\dot{\sigma}[x] \mid x\in [T_k]^{V[G]}\}$
 \end{center}
 is a perfect subset of $A$.
 By the choice of $\dot{\sigma}$, it suffices to see that $V[G]$ is an extension of $V[G_{\nu}][x]$ by $\coll(\mu, {<}\kappa)$ 
 for each branch $x\in [T_k]$ in $V[G]$.
 
  Suppose that $x$ is a branch of $T_k$ in $V[G]$.
  Take $\theta < \kappa$ such that
      $V[G_{\nu}][x] \subseteq V[G_{\theta+1}]$
  and, in $V$, $\theta$ is a regular cardinal with $\theta ^ {<\mu} = \theta$.
  By Fact \ref{lem:gen2.18},
  $V[G_{\theta+1}]$ is an extension of $V[G_\nu][x]$ by a poset $\mbbP$ 
  which is $\mu$-closed. 
  Since $\mbbP$ collapses cardinals in $[\nu, \theta]$ and has the size $\theta^{<\mu} = \theta$,
  $\mbbP$ is forcing equivalent to $\coll(\mu, {<}\ \theta+1)$.
  Thus $\mbbP \times \coll (\mu, [\theta+1, \kappa))$ is forcing equivalent to $\coll(\mu, {<}\kappa)$,
  so $V[G]$ is an extension of $V[ G_\nu ][x]$ by $\coll(\mu, {<}\kappa)$.
  \hfill$\square$

  \vspace{0.5\baselineskip}

  A point of the above proof is that $V[G]$ is a forcing extension of $V[ G_\nu ][x]$ by $\coll ( \mu , {<}\ \kappa )$ for any branch $x$ of $T_k$ in $V[G]$. The original proof in \cite{MR3743612} only shows that $V[G]$ is a $\mu$-closed forcing extension of $V[ G_\nu \times h ][x]$ for any such $x$. This does not seem to imply that $V[G]$ is an extension of $V[ G_\nu ][x]$ by $\coll ( \mu , {<}\ \kappa )$.

\subsection{Perfect set dichotomy in the generalized Solovay model}\label{sec:psd_genSolovay}
 We show the perfect set dichotomy in the generalized Solovay model.
 \begin{theorem}\label{thm:psd_genSolovay}
  Assume $(\ast)_{\mu, \kappa}$ and $\omega<\mu$.
  In $\VmuVG$ suppose $E$ is an equivalence relation on $\mu^\mu$.
  Then either $\mu^\mu / E$ is well-orderable, or else there is an $E$-inequivalent perfect subset of $\mu^\mu$.
 \end{theorem}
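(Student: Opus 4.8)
The plan is to re-run the proof of Theorem~\ref{dic_equi_Solovay}, replacing the elementary factorization of the L\'evy collapse (Fact~\ref{fact:factor}) available for $\mu=\omega$ by Proposition~\ref{prop:coll_unctble} and Fact~\ref{lem:gen2.18}. Working in $V[G]$, write $x\, E\, y\iff V[G]\models\varphi(v,z,x,y)$ with $v\in V$ and $z\in(\mu^\mu)^{V[G]}$. First pick $\nu_0<\kappa$ with $z\in V[G_{\nu_0}]$ and replace $V$ by $V[G_{\nu_0}]$: this is harmless because $G_{\nu_0}$ is coded by an element of $(\mu^\mu)^{V[G]}$ and $(\ast)_{\mu,\kappa}$ persists, so anything definable in $V[G]$ from $V[G_{\nu_0}]\cup(\mu^\mu)^{V[G]}$ lies in $\VmuVG$. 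Thus assume $z\in V$ and drop it, and put $\psi(v,x,y):\equiv\ \Vdash_{\coll(\mu,{<}\kappa)}\varphi(\check v,\check x,\check y)$. The basic tool is the \emph{coherence fact}: if $V\subseteq M\subseteq V[G]$ and $V[G]$ is a $\coll(\mu,{<}\kappa)$-extension of $M$, then $x\, E\, y\iff M\models\psi(v,x,y)$ for all $x,y\in M\cap\mu^\mu$, by homogeneity of $\coll(\mu,{<}\kappa)$. By Proposition~\ref{prop:coll_unctble} this holds for every $M=V[G_\nu]$ with $\nu<\kappa$ (these are nice), and, as in the proof of Fact~\ref{fact:psp} via Fact~\ref{lem:gen2.18}, also for $M=V[G_\nu][\langle x_i\mid i<\gamma\rangle]$ when $\gamma<\mu$ and the $x_i$ are distinct branches of a $\mbbP_\mu$-generic tree $k$ with $V[G]=V[G_\nu\times k\times h]$.

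In $V$ let $\Omega$ be the set of triples $(\nu,p,\dot a)$ with $\nu<\kappa$, $p\in\coll(\mu,{<}\nu)$ and $\dot a\in\mcalH_\kappa^V$ a $\coll(\mu,{<}\nu)$-name for an element of $\mu^\mu$, and set $\Eval(\nu,p,\dot a)=\{\dot a[h]\mid h$ is a $\coll(\mu,{<}\nu)$-generic over $V$ with $p\in h\}$. Every real of $V[G]$ lies in some $V[G_\nu]$, hence equals $\dot a[G_\nu]$ for such a $\dot a$. Call $\Eval(\nu,p,\dot a)$ an $E$-\emph{component} if its members are pairwise $E$-equivalent; since a $\coll(\mu,{<}\nu)$-generic $h\in V[G]$ over $V$ already lies in some (nice) $V[G_{\nu'}]$, where $\psi$ computes $E$ by the coherence fact, and $\coll(\mu,{<}\kappa)$ is homogeneous, ``$\Eval(\nu,p,\dot a)$ is an $E$-component'' is equivalent to a statement decided by $1_{\coll(\mu,{<}\kappa)}$ and is therefore a $V$-definable property of $(\nu,p,\dot a)$. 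Now split into two cases as in Theorem~\ref{dic_equi_Solovay}. \emph{Case I}: for every $(\nu,q,\dot a)\in\Omega$ some $p\le q$ makes $\Eval(\nu,p,\dot a)$ an $E$-component. Then density of $D=\{p\mid\Eval(\nu,p,\dot a)$ is an $E$-component$\}\in V$ and genericity of $G_\nu$ give, for every real $a$, a triple $(\nu,p,\dot a)\in\Omega$ with $a\in\Eval(\nu,p,\dot a)$ an $E$-component (the analogue of Claim~\ref{E_component}); fixing a well-order $\le_\Omega\in V$ of $\Omega$, the map sending each $E$-class to its $\le_\Omega$-least representing triple is injective and definable in $V[G]$ from $v$, $\le_\Omega$, $\Eval$, hence lies in $\VmuVG$, so $\mu^\mu/E$ is well-orderable there.

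\emph{Case II}: fix $(\nu_1,q,\dot a)\in\Omega$ with no $p\le q$ making $\Eval(\nu_1,p,\dot a)$ an $E$-component; by an automorphism of $\coll(\mu,{<}\nu_1)$ we may assume $q\in G_{\nu_1}$. Arguing as in Claim~\ref{claim:product_force}, $(q,q)\Vdash_{\coll(\mu,{<}\nu_1)\times\coll(\mu,{<}\nu_1)}\lnot\psi(\check v,\dot a_{\mrm{left}},\dot a_{\mrm{right}})$: if $(q_0,q_1)\le(q,q)$ forced $\psi$, non-componentness of $\Eval(\nu_1,q_0,\dot a)$ gives $\coll(\mu,{<}\nu_1)$-generics $h_0,h_1\in V[G]$ over $V$ with $q_0\in h_0\cap h_1$ and $\lnot(\dot a[h_0]\, E\,\dot a[h_1])$; choosing $k\in V[G]$ generic over $V[h_0]$ and $V[h_1]$ with $q_1\in k$ (possible since $\coll(\mu,{<}\nu_1)$ is $\mu$-closed of size $\le\mu$ in $V[G]$), both $h_i\times k$ are product-generic over $V$, and since $h_i,k$ lie in some nice $V[G_{\nu'}]$ the coherence fact yields $\dot a[h_i]\, E\,\dot a[k]$ for $i=0,1$, hence $\dot a[h_0]\, E\,\dot a[h_1]$, a contradiction. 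Splitting $\coll(\mu,{<}\nu_1)$ as a product of $\add(\mu,1)$ with a $\mu$-closed poset and absorbing the two copies of the latter into a fixed part of $G$, this two-dimensional non-componentness is converted into an $\add(\mu,1)$-name $\dot\rho$ over a suitable $V[G_\nu]$ with $1\Vdash_{\add(\mu,1)\times\add(\mu,1)}\lnot\psi(v,\dot\rho_{\mrm{left}},\dot\rho_{\mrm{right}})$. As in the proof of Fact~\ref{fact:psp}, take a $\mbbP_\mu$-generic $k$ over $V[G_\nu\times h]$ (with $h$ the $\coll(\mu,{<}\kappa)$-part) such that $V[G]=V[G_\nu\times k\times h]$; by Fact~\ref{lem:gen2.18} any two distinct branches $x,x'$ of $T_k$ are $\add(\mu,2)$-generic over $V[G_\nu]$ and $V[G]$ is a $\coll(\mu,{<}\kappa)$-extension of $V[G_\nu][x,x']$, so the coherence fact gives $\lnot(\dot\rho[x]\, E\,\dot\rho[x'])$. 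Hence $\{\dot\rho[x]\mid x\in[T_k]^{V[G]}\}\in\VmuVG$ is a set of size $2^\mu>\mu$ of pairwise $E$-inequivalent reals; by the perfect set property (Fact~\ref{fact:psp}, which holds for every set of reals of $\VmuVG$ after relativizing its proof to the relevant $V[G_\nu]$) it has a perfect subset, which is the required perfect set of pairwise $E$-inequivalent reals.

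The conceptual obstacle, and the reason this is not a mechanical translation of Theorem~\ref{dic_equi_Solovay}, is exactly the phenomenon stressed before Definition~\ref{def:nice}: a quotient of the L\'evy collapse need not be $\mu$-closed, so $E$ cannot be ``decided'' inside an arbitrary intermediate model containing the relevant reals. This is handled by (i) always arranging that the reals on which $E$ is decided already lie in some $V[G_{\nu'}]$, which is nice, so that Proposition~\ref{prop:coll_unctble} makes the coherence fact applicable; and (ii) building the perfect set not from a perfect tree of $\coll(\mu,{<}\nu_1)$-conditions — in the uncountable case no such tree of mutually generic filters need exist — but from the forcing $\mbbP_\mu$ of Definition~\ref{def:perfect_tree}, whose generic branches supply mutually generic reals living in nice intermediate models by Fact~\ref{lem:gen2.18}. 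I expect the genuinely delicate step to be the conversion, in Case II, of the $\coll(\mu,{<}\nu_1)$-level non-componentness into the $\add(\mu,1)/\mbbP_\mu$ framework compatible with Fact~\ref{lem:gen2.18}; everything else should be a careful but routine combination of the argument of Theorem~\ref{dic_equi_Solovay} with the machinery of Sections~\ref{sec:preliminaries} and~\ref{sec:higher}.
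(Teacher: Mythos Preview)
Your overall plan is right and matches the paper, but there is a genuine gap in your Case~II argument, precisely at the step you did \emph{not} flag as delicate: the proof that $(q,q)\Vdash_{\coll(\mu,{<}\nu_1)^2}\lnot\psi(\check v,\dot a_{\mrm{left}},\dot a_{\mrm{right}})$. You take arbitrary $\coll(\mu,{<}\nu_1)$-generics $h_0,h_1$ witnessing non-componentness, build $k$ mutually generic with both, and then write ``since $h_i,k$ lie in some nice $V[G_{\nu'}]$ the coherence fact yields $\dot a[h_i]\,E\,\dot a[k]$.'' This does not follow. What the forcing hypothesis $(q_0,q_1)\Vdash\psi$ gives you is $V[h_i\times k]\models\psi(v,\dot a[h_i],\dot a[k])$; your coherence fact, however, requires that $V[G]$ be a $\coll(\mu,{<}\kappa)$-extension of the model in which $\psi$ is evaluated, i.e.\ that $h_i\times k$ be \emph{nice}. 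That the reals $\dot a[h_i],\dot a[k]$ happen to lie in some nice $V[G_{\nu'}]$ is irrelevant: you have no control over whether $V[G_{\nu'}]\models\psi(v,\dot a[h_i],\dot a[k])$, and $\psi$ is not absolute between $V[h_i\times k]$ and $V[G_{\nu'}]$. Since your $\Eval$ ranges over \emph{all} generics, your witnesses $h_0,h_1$ need not be nice, and a $k$ built by a naive $\mu$-closed diagonalization has no reason to make $h_i\times k$ nice either. This is exactly the failure of Fact~\ref{fact:factor} for $\mu>\omega$ that you mention in your last paragraph, showing up one step earlier than you anticipated.

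The paper closes this gap by two modifications. First, it replaces $\Eval$ by $\Eval^*$, which quantifies only over \emph{nice} generics; Case~I goes through unchanged, and in Case~II the witnesses $h_0,h_1$ to non-componentness are now nice. Second, it proves a separate claim (Claim~\ref{claim:nice_mutually} in the paper): given nice $h_0,h_1$ and any $p$, one can choose $k$ with $p\in k$ so that both $h_i\times k$ are nice product-generics. This is not a diagonalization inside $V[G]$; it uses Lemma~\ref{lem:nu+1} to realize $V[G_{\nu+1}]$ as a $\coll(\mu,{<}\,\nu+1)$-extension of each $V[h_i]$, and then takes $k$ from a fresh copy of $\coll(\mu,\xi)$ sitting inside the tail $\coll(\mu,[\nu+1,\kappa))$. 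With nice $h_i\times k$ in hand, the coherence fact applies directly to $V[h_i\times k]$ and your contradiction goes through. The rest of your outline (the $\mbbP_\mu$-tree construction and the appeal to Fact~\ref{fact:psp}) is essentially what the paper does; the ``conversion to $\add(\mu,1)$'' you worried about is handled simply by the forcing equivalence of $\coll(\mu,\xi)$, $\add(\mu,1)$ and $\mbbP_\mu$ in $V[G_\alpha]$.
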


 \begin{proof}
  We work in $V[G]$.
  Let $E$ be an equivalence relation on $\mu^\mu$ in $\VmuVG$.
  Below, $\mcalH_\kappa$ denotes the one in $V$.
  Take a formula $\varphi$, $v \in V$ and $r \in \mu^\mu$ such that $x \, E \, y$ if and only if $\varphi (v,r,x,y)$ for all $x,y \in \mu^\mu$.
  By the $\kappa$-c.c. of $\coll (\mu, {<}\kappa)$
  we can take $\zeta\in [\mu, \kappa)$ such that 
  $r \in V[G_{\zeta}]$.
  Since $V[G]$ is a $\coll(\mu, {<}\kappa)$-generic extension of $V[G_{\zeta}]$,
  by replacing the ground model $V$ with $V[G_{\zeta}]$ we may assume that $r \in V$.
  So we may omit $r$.
  We may also assume $\kappa$ is definable from $v$.

  Let $\psi (v,x,y)$ be the formula stating that $\Vdash_{\coll(\mu, {<}\kappa)} \varphi ( \check{v} , \check{x} , \check{y} )$.
  By Proposition \ref{prop:coll_unctble},
  if $z \in (\mcalH_\kappa)^{V[G]}$ is nice then for all $x, y\in V[z]$
  we have
  \begin{center}
      $x E y 
      \,
      \Leftrightarrow
      \, 
      V[z] \models \psi (v, x, y)$.
  \end{center} 

  We divide the proof into two cases, as in the proof of \ref{dic_equi_Solovay}.
  The difference is that here we only deal with intermediate models that are nice.

  In $V$, let $\Omega$ be the set of all triples $( \xi , p , \dot{x} )$ such that 
  $\xi \in [\alpha, \kappa)$, $p \in \coll (\mu, \xi)$ and $\dot{x}$ is a 
  $\coll (\mu, \xi)$-name for an element of $\mu^\mu$ with $\dot{x} \in \mcalH_\kappa$.
  For $( \xi , p , \dot{x} ) \in \Omega$,
  \[
  \Eval^* (\xi,p,\dot{x}) :=
  \{ \dot{x}[h] \mid \mbox{$h$ is a nice
  $\coll (\mu, \xi)$-generic filter over $V$ with $p\in h$} \} \, .
  \]

  Note that $\Eval^* \in \VmuVG$ since the niceness is definable in $\VmuVG$.

  A non-empty set $X\subseteq\mu^\mu$ is called an $E$-\textit{component} if $x \, E \, y$ for all $x, y \in X$.

  \vskip.5\baselineskip

  \noindent\underline{(Case I)} For any $( \xi , q , \dot{x} ) \in \Omega$ there is $p \leq q$ such that $\Eval^* (\xi,p,\dot{x})$ is an $E$-component.
  
  \vskip.5\baselineskip

  \begin{claim} \label{E_component^*}
  For any $a \in \mu^\mu$ there is $( \xi , p , \dot{x} ) \in \Omega$ such that $a \in \Eval^* ( \xi , p , \dot{x} )$ and $\Eval^* ( \xi , p , \dot{x} )$ is an $E$-component.
  \end{claim}

  \noindent\textit{Proof of Claim}.
  Suppose $a \in \mu^\mu$. Then there is $\xi < \kappa$, a $\coll(\mu, \xi)$-name $\dot{x}$ and a
  nice $\coll (\mu, \xi)$-generic filter $h$ over $V$ such that $a = \dot{x}[h]$. Let
  \[
  D :=
  \{ p \in \coll ( \mu , \xi ) \mid \mbox{$\Eval^* ( \xi , p , \dot{x} )$ is an $E$-component} \} \, .
  \]
  Then $D$ is dense in $\coll ( \mu , \xi )$ by the assumption of Case I, and $D \in V$ as in the proof of Claim \ref{E_component}.
  Take $p \in h \cap D$. Then $( \xi , p , \dot{x} )$ is as desired.
  \hfill $\square$(Claim \ref{E_component^*})

  \vskip.5\baselineskip

  Since $\AC$ holds in $V$, we can take a well-ordering $\leq_\Omega$ of $\Omega$.
  By Claim \ref{E_component}, for each $A \in \mu^\mu / E$, there is $( \xi , p , \dot{x} ) \in \Omega$ such that $\Eval^* ( \xi , p , \dot{x} ) \subseteq A$. 
  For $A \in \mu^\mu / E$, let $( \xi_A , p_A , \dot{x}_A )$ be the $\leq_\Omega$-least such $( \xi , p , \dot{x} ) \in \Omega$.
  For $A,B \in \mu^\mu / E$, let $A \trianglelefteq B$ if $( \xi_A , p_A , \dot{x}_A ) \leq_\Omega ( \xi_B , p_B , \dot{x}_B )$.
  Then $\trianglelefteq$ well-orders $\mu^\mu / E$.
  Since $\leq_\Omega$ and $ \Eval^*$ belong to $ \VmuVG$, $\trianglelefteq$  belongs to $\VmuVG$.

  \vskip.5\baselineskip
  
  \noindent\underline{(Case II)} There is $( \xi , q , \dot{x} ) \in \Omega$ such that $\Eval^* ( \xi , p , \dot{x} )$ is not an $E$-component for any $p \leq q$.

  \vskip.5\baselineskip
 
  Take $(\xi, q, \dot{x}) \in \Omega$ as in the assumption.
  Let $\dot{x}_\mathrm{left}$ and $\dot{x}_\mathrm{right}$ be $\coll(\mu, \xi)\times \coll(\mu, \xi)$-names in $V$ as in the proof of Theorem \ref{dic_equi_Solovay}.

  \vskip.5\baselineskip

  \begin{claim} \label{claim:nice_mutually}
  Suppose $h_0, h_1$ are  
  nice $\coll(\mu, \xi)$-generic filters over $V$ and $p \in \coll(\mu, \xi)$.
  Then there is a $\coll(\mu, \xi)$-generic filter
   $k$ over $V$ with $p \in k$ such that $h_1 \times k$ and $h_2 \times k$ are nice $\coll(\mu, \xi) \times \coll(\mu, \xi)$-generic filters over $V$.
  \end{claim}

    \noindent\textit{Proof of Claim}.
   Take $\nu < \kappa$ such that $h_0, h_1 \in V[G_{\nu + 1}]$ and, in both $V[ h_0]$ and $V[ h_1 ]$, $\nu$ is a regular cardinal with $\nu^{<\mu}=\nu$.
   By Lemma \ref{lem:nu+1}, we can take $\coll(\mu, {<}\ \nu+1)$-generic filters $l_0$ and $l_1$ over $V[h_0]$ and $V[h_1]$ respectively such that
   \begin{center}
    $V[G_{\nu+1}] = V[h_0][l_0] = V[h_1][l_1]$.
   \end{center}
   Let $H$ be a $\coll(\mu, {<}\kappa)$-generic filter over $V[G_{\nu+1}]$ such that $V[G]=V[G_{\nu+1}][H]$ and let $k' = H \cap \coll(\mu, \xi)$.
   Then $k'$ is $\coll(\mu, \xi)$-generic over $V[G_{\nu+1}]$, thus generic over $V[h_0]$ and $V[h_1]$.
   
   We see that $h_i \times k'$ is nice for each $i=0,1$.
   Suppose $y\in \mcalH_\kappa^{V[G]}$.
   We can take a cardinal  $\alpha$ with $\nu < \alpha < \kappa$ with $y \in V[G_{\nu+1}][H_\alpha]$. Then
   \begin{center}
    $V[G_{\nu+1}][H_\alpha]=V[h_i][l_i][k'][H_{(\xi,\alpha)}] = V[h_i][k'][l_i][H_{(\xi, \alpha)}]$
   \end{center}
   for $i=0,1$.
   Thus $y$ is contained by a $\mu$-closed forcing extension of $V[h_i \times k']$.
   
   By the homogeneity of $\coll (\mu, \xi)$, 
   we can take a $\coll (\mu, \xi)$-generic filter $k$ over $V$ such that $p \in k$ and $V[k] = V[k']$, which is as required.
  \hfill $\square$(Claim \ref{claim:nice_mutually})
 
  \vskip.5\baselineskip

  \begin{claim} \label{claim:nice_product_force}
  In $V$, $(q,q) \Vdash_{\coll(\mu,\xi) \times \coll(\mu, \xi)} \lnot \psi ( \check{v} , \dot{x}_\mathrm{left} , \dot{x}_\mathrm{right} )$.
  \end{claim}

  \noindent\textit{Proof of Claim}.
  This proof is the same as the proof of Claim \ref{claim:product_force}.
  
  Assume the claim fails. Then there is $(q_0,q_1)\leq (q,q)$ such that
  
  \begin{center}
    $(q_0,q_1) 
    \Vdash_{\coll(\mu, \xi)\times \coll(\mu, \xi)} \psi ( \check{v} , \dot{x}_\mathrm{left} , \dot{x}_\mathrm{right} )$
    \end{center}
  
  \noindent
  in $V$. By the choice of $( \xi , q , \dot{x} )$ and $q_0 \leq q$, $\Eval^* ( \xi, q_0, \dot{x} )$ is not an $E$-component.
  Hence there are nice $\coll(\mu, \xi)$-generic filters $h_0, h_1$ over $V$ such that $q_0 \in h_0, h_1$ and $\lnot ( \dot{x} [ h_0 ] \, E \, \dot{x} [ h_1 ] )$.  
  By Claim \ref{claim:nice_mutually}, we can take a nice $\coll(\mu, \xi)$-generic filter $k$ over $V$ with $q_1 \in k$ such that 
  $h_0 \times k$ and $h_1 \times k$ are nice $\coll(\mu, \xi) \times \coll(\mu, \xi)$-generic filters over $V$.

  Since $( q_0 , q_1 ) \in h_i \times k$,
  \begin{center}
    $V[h_i \times k] \models \,  
    \psi (\check{v}, \dot{x}_\mathrm{left}, \dot{x}_\mathrm{right})$
  \end{center}for $i = 0,1$.
  By the niceness of $h_i \times k$ and Proposition \ref{prop:coll_unctble},
  $\dot{x}[h_i] \, E \, \dot{x}[k]$ holds in $V[G]$.
  Then $\dot{x} [ h_0 ] \, E \, \dot{x} [k] \, E \, \dot{x} [ h_1 ]$.
  This contradicts the choice of $h_0$ and $h_1$.
  \hfill $\square$(Claim \ref{claim:nice_product_force})

  \vskip.5\baselineskip

  We construct a family of $\coll(\mu, \xi)$-generic filters and obtain a desired perfect set as interpretations of $\dot{x}$ by them.

  Let $\alpha < \kappa$ be a successor cardinal with $\xi <\alpha$.
  Note that $\add(\mu, 1)$, $\coll(\mu, \xi)$ and $\mbbP_\mu$ are forcing equivalent in $V[G_\alpha]$.
  Since $\coll(\mu, [\alpha, \kappa))$ and $\mbbP_\mu \times \coll(\mu, [\alpha, \kappa))$ are forcing equivalent in $V[G_\alpha]$, we can take a $\mbbP_\mu \times \coll(\mu, [\alpha, \kappa))$-generic filter $H\times G'_{[\alpha, \kappa)}$ over $V[G_\alpha]$ such that 
  \begin{center}
  $V[G] = V[G_\alpha][H][G'_{[\alpha, \kappa)}]$.
  \end{center}

  Applying Fact \ref{lem:gen2.18} as $\mbbS = \coll(\mu, [\alpha, \kappa))$,
  for each distinct $b, c \in [T_H]^{V[G]}$,
  $\langle b, c\rangle$ is nice and $\add(\mu, 1) \times \add(\mu, 1)$-generic over $V[G_\alpha]$.
  
  Since $\add(\mu, 1)$ is forcing equivalent to $\coll(\mu, \xi)$ in $V[G_\alpha]$,
  for $b \in [T_H]$
  we can take a nice $\coll(\mu, \xi)$-generic filter $h_b$ over $V[G_\alpha]$ corresponding to $b$.
  By the homogeneity of $\coll (\mu, \xi)$, we can assume that $q \in h_b$ for all $b \in [T_K]$.

  Then by Claim \ref{claim:nice_product_force}, 
  $X := \{ \dot{x}[h_b] \mid b\in [T_K] \}$ is an $E$-inequivalent subset of $\mu^\mu$.
  Thus $X$ is an injective image of a perfect set $[T_K]$.
  By Fact \ref{fact:psp}, $X$ contains a perfect subset.
  \end{proof}

 \section{Linear orders in Solovay model}

 Woodin showed the following dichotomy theorem under the assumption of $\ZF+\AD+V=\LR$.
 The argument can be founded in \cite{chan2021cardinality}.

 \begin{fact}[Woodin]\label{fact:dic_set_AD}
  Assume $\ZF+\AD+V=\LR$.
  For any set $X$, either $X$ is well-orderable, or else there is an injection from $\mbbR$ to $X$.
 \end{fact}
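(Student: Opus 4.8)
The statement is Woodin's theorem, and the plan is to derive it from the perfect set dichotomy for equivalence relations on $\mbbR$ --- which under $\ZF + \AD + V = \LR$ is itself a theorem of Woodin, as recalled in the introduction --- combined with the Skolem-hull analysis of $\LR$ and the Coding Lemma.

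First I would reduce to fibers over $\mbbR$. Given a set $X$, fix an ordinal $\eta$ with $X \in L_\eta(\mbbR)$ and every member of $X$ in $L_\eta(\mbbR)$. Since each element of $L_\eta(\mbbR)$ is definable there from $\mbbR$, finitely many reals, and finitely many ordinals $< \eta$, enumerating the relevant Skolem terms and coding finite tuples of ordinals by single ordinals produces an $\mathrm{OD}$ surjection $\pi \colon \eta \times \mbbR \to L_\eta(\mbbR)$; as $X$ is $\mathrm{OD}$ from some real $z$, restricting $\pi$ to $\pi^{-1}[X]$ gives an $\mathrm{OD}_z$ surjection onto $X$. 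For $\alpha < \eta$ set $Y_\alpha := \pi[(\{\alpha\} \times \mbbR) \cap \pi^{-1}[X]]$ and let $E_\alpha$ be the equivalence relation $r \, E_\alpha \, r' \iff \pi(\alpha, r) = \pi(\alpha, r')$ on $\{ r : \pi(\alpha,r) \in X \}$; these are $\mathrm{OD}$ uniformly in $(z,\alpha)$, the $E_\alpha$-quotient is in bijection with $Y_\alpha$, and $X = \bigcup_{\alpha < \eta} Y_\alpha$.

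Next, apply the equivalence-relation dichotomy to each $E_\alpha$ (in the form valid for equivalence relations on arbitrary sets of reals). If for some $\alpha$ there is a perfect set of $E_\alpha$-inequivalent reals, then $r \mapsto \pi(\alpha, r)$ sends it injectively into $Y_\alpha \subseteq X$, so $\mbbR$ injects into $X$ and we are done. Otherwise every $E_\alpha$-quotient, hence every $Y_\alpha$, is well-orderable, say of cardinality an ordinal $\delta_\alpha$, which is the cardinality of a surjective image of $\mbbR$. The remaining task is to well-order $X = \bigcup_\alpha Y_\alpha$, and for that I would produce, \emph{uniformly in $\alpha$}, an $\mathrm{OD}_{z,\alpha}$ injection $e_\alpha \colon Y_\alpha \to \delta_\alpha$; then the map $e(x) := \langle \alpha_x, e_{\alpha_x}(x) \rangle$ (for a fixed definable pairing of ordinals), with $\alpha_x := \min\{ \alpha < \eta : x \in Y_\alpha \}$, is injective because each $e_\alpha$ is, and it is a legitimate function since the sequence $\langle e_\alpha \mid \alpha < \eta \rangle$ is $\mathrm{OD}_z$ --- whence $X$ is well-orderable.

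I expect the uniform coding to be the main obstacle: a well-orderable set carries only \emph{some} well-ordering, whereas the argument above needs $\langle e_\alpha \mid \alpha < \eta \rangle$ to be a genuine set. This is precisely what the Coding Lemma delivers: relative to an $\mathrm{OD}_z$ prewellordering of $\mbbR$ of length at least $\delta_\alpha$ (available since $\delta_\alpha$ is the cardinality of a surjective image of $\mbbR$), one extracts, in an $\mathrm{OD}_{z,\alpha}$ way, a set of representatives realizing a bijection of the $E_\alpha$-quotient with $\delta_\alpha$. Of course, the genuine depth of the theorem sits in the two imported ingredients --- the perfect set dichotomy for equivalence relations and the Coding Lemma --- both of which rest on the descriptive-set-theoretic fine structure of $\LR$ under $\AD$.
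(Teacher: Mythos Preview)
The paper does not give its own proof of this statement: it is recorded as a Fact attributed to Woodin, with the reader referred to \cite{chan2021cardinality} for the argument, and is then used as a black box. The paper's own contribution is the analogue in the (generalized) Solovay model, Theorem~\ref{thm:dic_set_Solovay}.

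That said, your outline is essentially the standard one, and it parallels exactly what the paper does to prove Theorem~\ref{thm:dic_set_Solovay}: produce a surjection from $\Ord \times \mbbR$ onto $X$ (for $\LR$ via the Skolem-hull structure you describe; in the Solovay case via Fact~\ref{fact:surj}), examine the fiber equivalence relations $E_\alpha$, apply the perfect-set dichotomy to each, and if no fiber yields a perfect set of inequivalent reals, glue the well-orderings of the fibers into one on $X$. On the gluing step you reach for the Coding Lemma, and while something along those lines can be made to work, the more direct route --- and the one the paper takes in the Solovay setting --- is simply to observe that the \emph{proof} of the equivalence-relation dichotomy already outputs a well-ordering of $\mbbR / E$ explicitly definable from whatever parameters define $E$. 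Since your $E_\alpha$ are uniformly $\mathrm{OD}_{z,\alpha}$, the resulting well-orderings of the $Y_\alpha$ already form a sequence, and no separate uniformization device is needed.
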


 In this section, we prove that this dichotomy for sets holds in the generalized Solovay model for a weakly compact cardinal.
 
\begin{theorem}\label{thm:dic_set_Solovay}
  Assume $(\ast)_{\mu,\kappa}$.
  In $\VmuVG$, for any set $X$, either $X$ is well-orderable, or else there is an injection from $\mu^\mu$ to $X$.  
\end{theorem}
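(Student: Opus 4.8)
The plan is to reduce the statement to the perfect set dichotomy (Theorem~\ref{thm:psd_genSolovay}, or Theorem~\ref{dic_equi_Solovay} when $\mu=\omega$) by way of the surjection supplied by Fact~\ref{fact:surj}; the delicate point is that the well-orderings coming out of the ``well-orderable'' alternative of that dichotomy must be chosen uniformly, so we will have to look inside its proof rather than use it as a black box. Working in $\VmuVG$, I assume $X\neq\emptyset$ and fix by Fact~\ref{fact:surj} an ordinal $\lambda$ and a surjection $f\colon V_\lambda\times\mu^\mu\to X$ in $\VmuVG$, with $V_\lambda=V_\lambda^V$. As in the opening lines of the proof of Theorem~\ref{thm:psd_genSolovay}, I absorb real parameters \emph{once}: by the $\kappa$-c.c.\ of $\coll(\mu,{<}\kappa)$ there is $\zeta<\kappa$ such that, replacing $V$ by $V[G_\zeta]$ (which again satisfies $(\ast)_{\mu,\kappa}$, as $\kappa$ stays inaccessible and $\coll(\mu,[\zeta,\kappa))$ is forcing equivalent to $\coll(\mu,{<}\kappa)$), $f$ becomes definable in $V[G]$ from a single parameter $\bar v\in V$. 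For $a\in V_\lambda$ put $f_a:=f(a,\cdot)$ and $E_a:=\{(r,s)\in\mu^\mu\times\mu^\mu\mid f_a(r)=f_a(s)\}$, an equivalence relation on $\mu^\mu$ in $\VmuVG$ that is definable in $V[G]$ from the $V$-parameters $\bar v,a$ and no real; then $[r]_{E_a}\mapsto f_a(r)$ is a bijection of $\mu^\mu/E_a$ onto $f_a[\mu^\mu]$, and $X=\bigcup_{a\in V_\lambda}f_a[\mu^\mu]$.

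Next I would run the proof of Theorem~\ref{thm:psd_genSolovay} on each $E_a$ with this parameter-free presentation, noting that the set $\Omega$ of triples and the map $\Eval^*$ occurring there depend only on $\mu$ and $\kappa$; so I fix once and for all a well-ordering $\leq_\Omega\in V$ of $\Omega$. For each $a$ that proof yields two mutually exclusive and exhaustive alternatives, Case~I$(E_a)$ and Case~II$(E_a)$, with the following features. If Case~I$(E_a)$ holds, then by Claim~\ref{E_component^*} (for $\mu=\omega$, Claim~\ref{E_component}) and the argument after it, assigning to each $E_a$-class $A$ the $\leq_\Omega$-least triple $(\xi,p,\dot x)$ with $\Eval^*(\xi,p,\dot x)\subseteq A$ and ordering the classes accordingly defines a well-ordering $\trianglelefteq_a$ of $\mu^\mu/E_a$. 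If Case~II$(E_a)$ holds, then the construction in Case~II of that proof gives, in $\VmuVG$, a perfect subset $P_a$ of $\mu^\mu$ all of whose distinct members are $E_a$-inequivalent. Since all of this is uniform in $a$ (the only $a$-dependence being the notion of $E_a$-component), the predicate ``Case~I$(E_a)$'' and the family $\langle\trianglelefteq_a\mid a\in V_\lambda\rangle$ are definable in $\VmuVG$ from $f$, $\bar v$ and $\leq_\Omega$; also a well-ordering $\lhd$ of $V_\lambda$ lies in $V\subseteq\VmuVG$.

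Then I would split in $\VmuVG$. If Case~II$(E_a)$ holds for some $a\in V_\lambda$, let $a^\ast$ be $\lhd$-least such. The body of a perfect tree on $\mu^{<\mu}$ contains a definable copy of $2^\mu$ (follow splitting nodes, using closedness at limit stages), and $\mu^\mu$ injects definably into $2^\mu$, so there is an injection $j\colon\mu^\mu\to P_{a^\ast}$ in $\VmuVG$; since $\ran(j)\subseteq P_{a^\ast}$ is $E_{a^\ast}$-inequivalent, $f_{a^\ast}\circ j\colon\mu^\mu\to X$ is an injection, and we are done. Otherwise Case~I$(E_a)$ holds for every $a\in V_\lambda$, so each $\trianglelefteq_a$ well-orders $\mu^\mu/E_a$; I then well-order $X$ lexicographically, ordering $x\in X$ first by the $\lhd$-least $a$ with $x\in f_a[\mu^\mu]$ and, within a fixed such $a$, by $\trianglelefteq_a$ applied to the $E_a$-classes $f_a^{-1}(\{x\})$. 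This is a well-ordering of $X$ built from $f$, $\lhd$ and $\langle\trianglelefteq_a\mid a\in V_\lambda\rangle$, all in $\VmuVG$, so $X$ is well-orderable there. For $\mu=\omega$ the argument is the same using Theorem~\ref{dic_equi_Solovay}, with $\Eval$ in place of $\Eval^*$ and the classical fact that $\omega^\omega$ injects into the body of any perfect tree on $\omega^{<\omega}$.

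The step I expect to be the real obstacle is precisely this uniformity in the well-orderable alternative: in $\VmuVG$ we do not have $\AC$, so ``each $\mu^\mu/E_a$ is well-orderable'' does not yield a well-ordering of $X=\bigcup_a f_a[\mu^\mu]$ — one needs a \emph{definable, $a$-uniform} selection of well-orderings, and the content of the reduction is that the proof of the perfect set dichotomy supplies one, via the fixed $\leq_\Omega$ together with the parameter-free presentations $E_a$. Two bookkeeping points make this work: absorbing real parameters once (for $f$) rather than separately for each $E_a$, so that a single $\Omega,\leq_\Omega$ serves all $a$; and using the parameter-free ``Case~I versus Case~II'' dichotomy of that proof (rather than the a priori statement ``$\mu^\mu/E_a$ is well-orderable or not''), which is what lets us decide inside $\VmuVG$, uniformly in $a$, which alternative holds, and in particular avoids having to separately argue that $\mu^\mu$ is not well-orderable in $\VmuVG$.
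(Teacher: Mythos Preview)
Your proposal is correct and follows essentially the same approach as the paper: both reduce to the perfect set dichotomy via the surjection from Fact~\ref{fact:surj}, define the equivalence relations $E_a$ fiberwise, and then appeal to the \emph{proof} (not merely the statement) of Theorem~\ref{thm:psd_genSolovay}/\ref{dic_equi_Solovay} to extract well-orderings of the quotients uniformly in $a$, combining these with a well-ordering of $V_\lambda$ from $V$. Your write-up is in fact more explicit than the paper's on the key bookkeeping point---absorbing the single real parameter for $f$ once so that one fixed $\Omega$ and $\leq_\Omega$ serve all $a$---and your case split on ``Case~I$(E_a)$ versus Case~II$(E_a)$'' is a slightly cleaner way to phrase what the paper does when it says the construction is uniform.
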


\begin{proof}
  We work in $\VmuVG$. Take an arbitrary set $X$.
  By Fact \ref{fact:surj} we can take an ordinal $\lambda$ and a surjection $f \colon V_\lambda \times \mu^\mu\rightarrow X$, where $V_\lambda$ is the one in $V$.
  For $v\in V_\lambda$ let $X_v = f[\{v\}\times \mu^\mu]$.
  
  If there is $v\in V_\lambda$ such that $\mu^\mu$ injects into $X_v$, then $\mu^\mu$ injects into $X$ clearly.
  Thus we suppose that for any $v\in V_\lambda$ there is no injection from $\mu^\mu$ to $X_v$.
  
  For each $v \in V_\lambda$ define an equivalence relation $E_v$ on $\mu^\mu$ by $x \, E_v \, y$ if $f(v,x) = f(v,y)$.
  Note that $[x]_{E_v} \mapsto f(v,x)$ is a bijection from $\mu^\mu / E_v$ to $X_v$.
  So there is no injection from $\mu^\mu$ to $\mu^\mu / E_v$.
  By Theorem \ref{dic_equi_Solovay} $(\mu=\omega)$ and \ref{thm:psd_genSolovay} $(\mu>\omega)$, $\mu^\mu / E_v$ is well-orderable.
  Moreover, from the proof of Theorem \ref{dic_equi_Solovay} and \ref{thm:psd_genSolovay}, we can construct a well-ordering on $\mu^\mu /E_v$ uniformly for $v\in V_\lambda$.
  Then we have a sequence $\langle \leq_v \mid v \in V_\lambda \rangle$ such that each $\leq_v$ is a well-ordering of $X_v$.
  Note also that there is a well-ordering $\leq_\lambda$ of $V_\lambda$ since $\AC$ holds in $V$, and $V \subseteq \VmuVG$.
  Thus we can easily construct a well-ordering of $X = \bigcup_{v \in V_\lambda} X_v$ from $\leq_\lambda$ and $\langle \leq_v \mid v \in V_\lambda \rangle$.
\end{proof}

Applying this dichotomy theorem, we obtain the basis theorem for ``uncountable" linear orders in the generalized Solovay model.
We see the following partition properties of $\kappa$ and $\mbbR$.
For a function $g \colon [A]^2 \rightarrow 2$, a subset $B$ of $A$ is said to be $g$-\textit{homogeneous} if $g \upharpoonright [B]^2$ is constant.
\begin{lemma}\label{lem:partition_kappa}
  Assume $(\ast\ast)_{\mu,\kappa}$. In $\VmuVG$, $\mu^+ \rightarrow \left( \mu^+ \right)^2_2$. 
\end{lemma}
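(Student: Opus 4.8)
The plan is to reduce the partition relation in $\VmuVG$ to the weak compactness of $\kappa$ in a small inner model, exactly in the spirit of the (commented-out) proof of Lemma \ref{part_omega1_Solovay}. Note first that $\mu^+ = \kappa$ in $V[G]$ and in $\VmuVG$, since $\coll(\mu,{<}\kappa)$ collapses $\kappa$ to $\mu^+$; so the statement is really $\kappa \to (\kappa)^2_2$. Fix in $\VmuVG$ a coloring $g \colon [\kappa]^2 \to 2$. By Fact \ref{fact:surj} there is an ordinal $\lambda$ and a surjection from $V_\lambda \times \mu^\mu$ onto the relevant set, hence there are a formula $\varphi$, a parameter $v \in V$ and a parameter $z \in (\mu^\mu)^{V[G]}$ such that for all $\{\alpha,\beta\} \in [\kappa]^2$ and $i \in 2$, $g(\{\alpha,\beta\}) = i$ iff $V[G] \models \varphi(v,z,\alpha,\beta,i)$; as in the proofs above we may absorb $z$ into the ground model by working over $V[G_\zeta]$ for a suitable $\zeta \in [\mu,\kappa)$ with $z \in V[G_\zeta]$, using the $\kappa$-c.c.\ of $\coll(\mu,{<}\kappa)$. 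Renaming $V[G_\zeta]$ to $V$, we get $g \in V$ (via the absoluteness formula $\psi$, since $\Vdash_{\coll(\mu,{<}\kappa)}\varphi$ pins the value down in $V$ already by homogeneity).

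Next I would invoke that $\kappa$ stays weakly compact after a small forcing: since $|{\coll(\mu,{<}\zeta)}| < \kappa$ and $\kappa$ is weakly compact in the original $V$, $\kappa$ remains weakly compact in $V[G_\zeta]$ — weak compactness of an inaccessible $\kappa$ is preserved by forcing of size $<\kappa$ (this is the standard Lévy--Solovay style argument; here one uses the tree property / $\Pi^1_1$-indescribability characterization and the fact that a $\kappa$-tree in the extension is covered by a $\kappa$-tree computation in the ground model, or simply that small forcing does not destroy the weak compactness measure on $\mathcal{P}(\kappa)^V$ extended to the extension). Hence in the renamed ground model $V$ we have $g \in V$ and $\kappa$ weakly compact in $V$, so $\kappa \to (\kappa)^2_2$ holds in $V$: there is $H \in V$, $H \subseteq \kappa$ of order type $\kappa$, with $g \restrict [H]^2$ constant. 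Since $V \subseteq \VmuVG$ and $\kappa = \mu^+$ there, $H$ is a $g$-homogeneous subset of $\mu^+$ of order type $\mu^+$, which is exactly what is required.

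The main obstacle is making the reduction $g \in V$ airtight in the generalized ($\mu > \omega$) setting. In the $\mu = \omega$ case one simply appends the real parameter to $V$ and uses Fact \ref{fact:factor} plus homogeneity of the Lévy collapse. For $\mu > \omega$ the parameter $z$ is an element of $(\mu^\mu)^{V[G]}$, which need not be nice and need not lie in any $V[G_\eta]$; however, by the $\kappa$-c.c.\ (more precisely, since $\coll(\mu,{<}\kappa)$ is $\kappa$-c.c.\ and $\mathrm{cf}(\kappa) = \kappa > \mu$), every element of $\mu^\mu$ in $V[G]$ is already in $V[G_\zeta]$ for some $\zeta < \kappa$ — that is the content of the first reduction step in the proof of Theorem \ref{thm:psd_genSolovay}, and it applies verbatim here. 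Once $z \in V[G_\zeta]$ is secured, the argument is uniform in $\mu$: replace $V$ by $V[G_\zeta]$, observe $\kappa$ is still weakly compact there (small forcing), read off $g \in V[G_\zeta]$ from the absoluteness formula $\psi$, apply weak compactness in $V[G_\zeta]$, and lift the homogeneous set up to $\VmuVG$. So the only genuinely delicate points are (a) the $\kappa$-c.c.\ capture of the real/function parameter and (b) the preservation of weak compactness by the small initial segment of the collapse; both are standard, and neither requires the niceness machinery of \S\ref{sec:higher}.
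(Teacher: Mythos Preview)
Your proposal is correct and follows essentially the same route as the paper's proof: capture the coloring's $\mu^\mu$-parameter in some $V[G_\zeta]$ by the $\kappa$-c.c., use homogeneity of the tail collapse to see the coloring itself lies in $V[G_\zeta]$, observe that $\kappa$ remains weakly compact there since $\coll(\mu,{<}\zeta)$ is small, and pull the homogeneous set back into $\VmuVG$. The only cosmetic difference is that the paper works directly in $V[G_\xi]$ rather than renaming it to $V$, and it does not invoke Fact~\ref{fact:surj} (the formula with parameters comes straight from the definition of $\VmuVG$ as a $\mathrm{HOD}$).
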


\begin{proof}
    We work in $V[G]$.
    Note that $\kappa=\mu^+$ in $\VmuVG$.
    Fix $f \colon [\kappa]^2 \rightarrow 2$ in $\VmuVG$.
    We can take $r \in \mu^\mu$ such that $f \in \mrm{HOD}^{V[G]}_{V\cup \{r\}}$.
    Let $\xi$ be an ordinal with $\mu<\xi<\kappa$ and $r\in V[G_\xi]$.
    Using homogeneity,
    we can take a formula $\psi$ and $v \in V$ such that for any $\{ \alpha , \beta \} \in [ \kappa ]^2$ and each $i = 0,1$, we have $f( \{ \alpha , \beta \} ) = i$ if and only if $\psi ( v , r , \alpha , \beta , i )$ holds in $V[G_\xi]$.
    So $f \in V[G_\xi]$.
    
    Since $\kappa$ remains weakly compact in $V[G_\xi]$, 
    in $V[G_\xi]$, there is an $f$-homogeneous $H \subseteq \kappa$ of order-type $\kappa$.
    Since $V[G_\xi] \subseteq \VmuVG$, $H$ is as desired.
\end{proof}

We let $\langle X\rangle^\gamma$ denote the set of all sequences $\langle x_i \mid i<\gamma \rangle$ of distinct elements of $A$ for any set $X$ and ordinal $\gamma$. 

\begin{fact}[Schlicht \cite{MR3743612}]\label{fact:perfect_conti}
  Suppose that $\mu$ is an uncountable regular cardinal, $\mbbQ$ is a ${<}\mu$-distributive forcing and $\gamma < \mu$.
  Let $H$ be an $\add(\mu, 1)\times \mbbQ$-generic filter over $V$.
  Then in $V[H]$, for every function $f \colon \langle \mu^\mu \rangle^\gamma \mapsto \mu^\mu$ that is definable from an element of $V$,
  there is a perfect set $C \subseteq \mu^\mu$ such that $f\restrict \langle C \rangle^\gamma$ is continuous.
\end{fact}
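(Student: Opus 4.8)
The plan is to adapt to the generalized Baire space the classical fact that a parameter‑definable function on the reals of a Cohen extension is continuous on a perfect set, the perfect set coming from the perfect tree added by $\mbbP_\mu$.

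First I would reduce $\add(\mu,1)$ to $\mbbP_\mu$: by Fact~\ref{fact:cohen_equiv} the two are forcing equivalent, so write $V[H]=V[G][K]$ with $G$ a $\mbbP_\mu$-generic filter, $K$ a $\mbbQ$-generic filter, and $G,K$ mutually generic over $V$; let $T_G=\bigcup_{(t,s)\in G}t$ be the generic perfect tree. Since $\mbbQ$ is ${<}\mu$-distributive, $\mbbP_\mu$ is computed the same in $V$ and $V[K]$ and remains $\mu$-closed in $V[K]$, and $G$ is $\mbbP_\mu$-generic over $V[K]$. Applying Fact~\ref{lem:gen2.18} over the ground model $V[K]$ with the trivial poset in the role of $\mbbS$, every $\bar x=\langle x_i \mid i<\gamma\rangle\in\langle[T_G]\rangle^\gamma$ is $\add(\mu,\gamma)$-generic over $V[K]$ and $V[H]$ is a $\mu$-closed extension of $V[K][\bar x]$. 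Fix a formula $\varphi$ and $a\in V$ defining $f$, and, passing below a condition of $H$, assume $\varphi(\cdot,\cdot,a)$ is forced to define a function. The key local observation is then: for each such $\bar x$ and each $\alpha<\mu$, the restriction $f(\bar x)\restrict\alpha$ lies in $V[K][\bar x]$ — it is a $<\mu$-sequence of ordinals and $V[H]$ is a $\mu$-closed extension of $V[K][\bar x]$ — so it is decided by a single $\add(\mu,\gamma)$-condition in the generic filter determined by $\bar x$, namely one of the form $\langle x_i\restrict m_i \mid i<\gamma\rangle$ with $\sup_i m_i<\mu$; thus $f(\bar x)\restrict\alpha$ is read off from a bounded initial segment of $\bar x$ together with a fixed amount of side information living in $V[K]$.

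To upgrade this to continuity on a perfect set I would perform, over $V[K]$, a fusion of length $\mu$ in $\mbbP_\mu$: at stage $\xi$ one records the ${<}\mu$ many $\gamma$-tuples $\bar v$ of distinct splitting nodes at the $\xi$-th splitting level and, for each $\bar v$ and the target precision $\alpha_\xi$ dictated by a bookkeeping of all $\alpha<\mu$, thins the tree above $\bar v$ and refines the condition so as to stabilize $f$, at precision $\alpha_\xi$, to a single value on all branch-tuples through $\bar v$; limit stages are handled by the $\mu$-closure of $\mbbP_\mu$ over $V[K]$, and one extracts a perfect set $C$ (a subtree of, or a continuous definable image of, $T_G$) on which, by construction, $f\restrict\langle C\rangle^\gamma$ is continuous, which by genericity is as desired in $V[H]$. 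I expect the stabilization step to be the main obstacle: given a condition, a configuration $\bar v$, and a precision $\alpha$, one must thin below it so that $f\restrict\alpha$ becomes independent of which branch-tuple through $\bar v$ is chosen. Here one exploits the local observation above — for a generic tuple through $\bar v$ there are at most $\mu$ possible local values of $f$ at precision $\alpha$, the dependence being only on $\bar x$, using ${<}\mu$-distributivity of $\mbbQ$ to keep the side information inside $V[K]$ — and eliminates disagreements along a ${<}\mu$-length descending sequence of conditions, again using that $\mbbP_\mu$ is $\mu$-closed over $V[K]$; this is the exact generalization of Galvin's argument. One also has to deal with the fact that $\mbbP_\mu$ need not have fusion limits as conditions, so the fusion must be organized so that its limit is captured by a name or realized as a continuous image, together with the routine bookkeeping verifying that the $\mu$-many requirements indexed by pairs (splitting level, precision) are all met and that the resulting $C$ is genuinely perfect. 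The reduction to $\mbbP_\mu$, mutual genericity, and the location of $f(\bar x)\restrict\alpha$ in $V[K][\bar x]$ are immediate from Facts~\ref{fact:cohen_equiv} and~\ref{lem:gen2.18}.
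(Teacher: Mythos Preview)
The paper does not prove this statement; it is stated as a fact attributed to Schlicht~\cite{MR3743612} and used as a black box in the proof of Lemma~\ref{lem:partition_real}. There is therefore no proof in the paper to compare your proposal against.

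That said, your outline is the expected one and matches the structure of Schlicht's argument: pass from $\add(\mu,1)$ to $\mbbP_\mu$ via Fact~\ref{fact:cohen_equiv}, use Fact~\ref{lem:gen2.18} (over $V[K]$, with trivial $\mbbS$) to get that $\gamma$-tuples of branches are $\add(\mu,\gamma)$-generic and that the full extension is $\mu$-closed over $V[K][\bar x]$, deduce that each $f(\bar x)\restrict\alpha$ is decided by a bounded initial segment of $\bar x$, and then run a $\mu$-length fusion to stabilize these decisions uniformly on a perfect subtree. The points you flag as obstacles---organizing the stabilization at each splitting configuration and ensuring the fusion limit yields a genuine perfect tree---are exactly the technical content one has to supply; they are handled in \cite{MR3743612} but are not reproduced in the present paper.
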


\begin{lemma}\label{lem:partition_real}
    Assume $(\ast)_{\mu,\kappa}$. 
    In $\VmuVG$ let $[\mu^\mu]^2 = P_0 \sqcup P_1$ be a partition.
    Then there is a perfect set $C \subseteq \mu^\mu$ such that $[C]^2 \subseteq P_i$ for some $i = 0,1$. 
 \end{lemma}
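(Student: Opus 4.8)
The plan is to split into the cases $\mu=\omega$ and $\mu>\omega$. If $\mu=\omega$ we are in the usual Solovay model $\VomegaVG$, in which every set of reals has the Baire property by Solovay \cite{solovay1970model}; then $P_0$ and $P_1$ both have the Baire property, and Galvin's partition theorem for colourings with the Baire property gives a perfect $C$ with $[C]^2$ contained in one piece. So assume $\mu>\omega$ from now on, and write $c\colon[\mu^\mu]^2\to 2$ for the colouring with $c^{-1}(i)=P_i$.

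The first step is to put $c$ into continuous form on a perfect set, via Schlicht's Fact \ref{fact:perfect_conti}. Work in $V[G]$. Since $P_0\in\VmuVG$, it is definable in $V[G]$ from $v\in V$ together with finitely many elements of $(\mu^\mu)^{V[G]}$, which I code by a single $r\in(\mu^\mu)^{V[G]}$; by the $\kappa$-c.c.\ of $\coll(\mu,{<}\kappa)$ fix a successor cardinal $\alpha<\kappa$ with $r\in V[G_\alpha]$. As in the proof of Theorem \ref{thm:psd_genSolovay}, using Fact \ref{fact:cohen_equiv} to replace $\mbbP_\mu$ by $\add(\mu,1)$ and the fact that $\coll(\mu,[\alpha,\kappa))$ absorbs a further copy of it, $V[G]$ arises (up to forcing equivalence) as an $\add(\mu,1)\times\coll(\mu,[\alpha,\kappa))$-generic extension of $V[G_\alpha]$, with $\coll(\mu,[\alpha,\kappa))$ being ${<}\mu$-distributive over $V[G_\alpha]$. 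Regarding $c$ as the total function $f\colon\langle \mu^\mu \rangle^2\to\mu^\mu$ sending an ordered pair of distinct reals to the constant function with value its colour, $f$ is definable in $V[G]$ from an element of $V[G_\alpha]$, so Fact \ref{fact:perfect_conti} (with ground model $V[G_\alpha]$ and $\gamma=2$) yields, in $V[G]$, a perfect $C\subseteq\mu^\mu$ with $f\restriction\langle C \rangle^2$ continuous; equivalently $c\restriction[C]^2$ is continuous, both colour classes being relatively open in $\langle C \rangle^2$.

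The second step is a generalized Galvin theorem, a $\ZFC$ theorem valid when $\mu^{<\mu}=\mu$ and hence available in $V[G]$: every continuous $2$-colouring $c$ of $[C]^2$, for $C=[S]$ a perfect subset of $\mu^\mu$, has a perfect monochromatic subset. I would argue as follows. By a $\mu$-fusion of the kind used in the proof of Fact \ref{fact:psp}, shrink $S$ to a perfect $S_1$ that splits only at a prescribed family of nodes, such that for distinct branches $x,y$ of $S_1$ one has $c(\{x,y\})=\chi(u)$, where $u$ is the $\subseteq$-largest common node of $x,y$ in $S_1$ and $\chi$ is a fixed $2$-colouring of the splitting nodes of $S_1$; continuity drives the fusion, for whenever two incomparable current tips $u,w$ are to be split one takes $x\in N_u\cap C$ and $y\in N_w\cap C$ with $x\neq y$, uses continuity at $(x,y)$ to get incomparable initial segments $u'\supseteq u$, $w'\supseteq w$ of $x,y$ with $c$ constant on $(N_{u'}\times N_{w'})\cap[C]^2$, and commits the split there. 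Then there are $i<2$ and a node $r$ of $S_1$ such that the splitting nodes $u\supseteq r$ of $S_1$ with $\chi(u)=i$ are cofinal above $r$ in $S_1$: otherwise one could find $w_0\supseteq r$ above which no splitting node of $S_1$ has colour $0$, and then $w_1\supseteq w_0$ above which none has colour $1$, contradicting that $S_1$ has a splitting node above $w_1$. A perfect subtree $S''\subseteq S_1$ above $r$ that splits only at such nodes (going straight elsewhere, and using the closure of $S_1$ at limit stages) then gives a perfect $C'=[S'']\subseteq C$ with $[C']^2$ monochromatic of colour $i$. Finally $S''$ is coded by an element of $(\mu^\mu)^{V[G]}$, so $C'\in\VmuVG$, and ``$[C']^2\subseteq P_i$'' holds in $\VmuVG$ as well, since $\mu^\mu$ and $P_i$ are computed there exactly as in $V[G]$.

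The step I expect to be the main obstacle is the $\mu$-fusion producing $S_1$: one must interleave the bookkeeping that forces every branch to split cofinally in $\mu$ (so that $S_1$ is genuinely perfect) with the requirement of making $c$ constant on every newly created box, and handle limit stages $<\mu$ using the closure of $S$ together with $\mu^{<\mu}=\mu$. This is more delicate than, but in the same spirit as, the perfect-tree constructions behind Fact \ref{fact:psp}; if a suitable perfect-set partition theorem for $\mu^\mu$ is already recorded in the generalized descriptive set theory literature, one could cite it instead and drop the fusion.
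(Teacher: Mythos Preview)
Your proof is correct in outline and agrees with the paper through the case split $\mu=\omega$ versus $\mu>\omega$ and through the first step in the uncountable case: both you and the paper invoke Schlicht's Fact~\ref{fact:perfect_conti} (over $V[G_\alpha]$ with $\mbbQ=\coll(\mu,[\alpha,\kappa))$) to pass to a perfect $C$ on which the colouring is continuous, i.e.\ both pieces are relatively open.

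Where you diverge is in the second step. You propose a two-stage fusion: first refine to a perfect $S_1$ so that $c(\{x,y\})$ depends only on the meet of $x,y$, yielding a node-colouring $\chi$; then pigeonhole to find $(i,r)$ with colour-$i$ splitting nodes cofinal above $r$, and thin once more. This works, but it is more than is needed, and your description of the fusion is garbled: the phrase ``whenever two incomparable current tips $u,w$ are to be split'' conflates splitting a single tip with processing a pair. What you actually need is: when a tip $u$ is split into $u_0,u_1$, extend each so that $c$ is constant on $N_{u_0}\times N_{u_1}\cap[C]^2$; monochromaticity with earlier cousins is then automatic by inclusion of boxes. With that correction the argument goes through, though you rightly flag the limit-stage bookkeeping as the delicate point.

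The paper avoids all of this with a short asymmetric argument: once both $P_i$ are open on $C$ (or, after replacing $\mu^\mu$ by $C$, on $\mu^\mu$), either some basic open $U$ already satisfies $[U]^2\subseteq P_0$ and any perfect subset of $U$ works, or else every open $U$ meets $P_1$ in a pair, and openness of $P_1$ then lets one build $\langle u_s\mid s\in 2^{<\mu}\rangle$ with $N_{u_{s^\frown 0}}\times N_{u_{s^\frown 1}}\subseteq P_1$ directly, giving a perfect $P_1$-homogeneous set in one pass. This sidesteps the intermediate $\chi$ and the second thinning entirely. Your route is more symmetric and closer in spirit to the classical Galvin proof, but the paper's is shorter and has no fusion obstacle to worry about.
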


 \begin{proof}
  In case of $\mu=\omega$, the statement follows from the following fact and the Baire property in the Solovay model. 
  The proof can be found in Theorem 19.7 in \cite{kechris2012classical}.
  \begin{fact}[Galvin]\label{fact:Galvin}
    Let $[\omega^\omega]^2 = P_0 \sqcup P_1$ be a partition.
    If $P_0$ and $P_1$ have the Baire property,
    then there is a perfect set $C \subseteq \mbbR$ such that $[C]^2 \subseteq P_i$ for some $i = 0,1$.
  \end{fact}
  Now we assume that $\mu > \omega$.
  We may assume that we can define $P_0$ and $P_1$ in $V[G]$ using parameters in $V$.
  Suppose $[\mu^\mu]^2 = P_0 \sqcup P_1$ is a partition in $\VmuVG$.
  Define $f \colon \langle\mu^\mu\rangle^2 \rightarrow 2$ by $f(\langle x,y \rangle)=i$ if and only if $\{x,y\} \in P_i$ for $i=0, 1$.
  Note that $f(\langle x, y \rangle)=f(\langle y, x \rangle)$ for all $x \neq y \in \mu^\mu$.
  Take $r\in \mu^\mu$ such that $f \in \mathrm{HOD}^{V[G]}_{V\cup \{r\}}$ and let $\xi$ be an ordinal with $\mu<\xi<\kappa$ such that $r\in V[G_\xi]$.
  Since $V[G]$ is a $\coll(\mu, {<}\kappa)$-generic extension of $V[G_\xi]$ and $\coll(\mu, {<}\kappa)$ is forcing equivalent to $\add(\mu, 1) \times \coll(\mu, {<}\kappa)$,
  by Fact \ref{fact:perfect_conti}, we can take a perfect set $C \subseteq \mu^\mu$ in $V[G]$ such that $f\restrict \langle C \rangle^2$ is continuous.

  Let $Q_i = P_i \cap [C]^2$.
  Then $Q_i$ is the inverse image of $\{i\}$ by $f$ in $[C]^2$, so $Q_i$ is open in $[C]^2$.
  By replacing $\mu^\mu$ by $C$ if necessary, we may assume that $P_i$ is open.

  If there is a non-empty open set $U \subseteq \mu^\mu$
  with $[U]^2 \subseteq P_0$, then any perfect subset of $U$ is as desired.
  So we assume that $[U]^2 \cap P_1 \neq \emptyset$ for all non-empty open sets $U \subseteq \mu^\mu$.
  Since $P_1$ is open, we can take $s,t \in \mu^{<\mu}$ such that $N_s, N_t \subseteq U$ and $N_s \times N_t \subseteq P_1$ for each non-empty open sets $U$.
  
  Repeating this, we can construct 
  $\langle u_s \in \mu^{<\mu} \mid s\in 2^{<\mu}\rangle$ such that 
  \begin{itemize}
    \item $|u_s| > |s|$,
    \item $N_{u_{s \,  \hat{} \,  0}} \times N_{u_{s \, \hat{} \, 1}} \subseteq P_1$ for $s \in 2^{<\mu}$,
    \item $u_s = \bigcup_{\alpha < |u|} u_{s\restrict\alpha}$ for $s\in 2^{<\mu}$.
  \end{itemize}
  For $x \in 2^\mu$ let $u_x = \bigcup_{\alpha<\mu} u_{x\restrict \alpha}$.
  By construction, $C = \{u_x \in \mu^\mu \mid x\in 2^\mu\}$ is perfect and $[C]^2 \subseteq P_1$.
 \end{proof}

\begin{theorem}\label{Solovay_basis}
  Assume $(\ast\ast)_{\mu,\kappa}$.
    In $\VmuVG$, for any linearly ordered set $X$ such that there is no injection from $X$ to $\mu$, at least one of $( \mu^+, \leq )$, $( \mu^+, \geq )$ or $( 2^\mu , \leq_{\mathrm{lex}} )$ is embeddable into $X$, where $\leq_{\mathrm{lex}}$ is the lexicographic ordering.
\end{theorem}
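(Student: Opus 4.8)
The plan is to mimic the classical three-element basis argument for linear orders, replacing its two ingredients by their Solovay-model analogues proved above: the dichotomy for sets (Theorem~\ref{thm:dic_set_Solovay}) and the two partition lemmas (Lemmas~\ref{lem:partition_kappa} and~\ref{lem:partition_real}). Working in $\VmuVG$, I would fix a linear order $X$ with no injection into $\mu$ and apply Theorem~\ref{thm:dic_set_Solovay}: either $X$ is well-orderable, or there is an injection $f\colon \mu^\mu \to X$.

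In the well-orderable case, $|X|$ is a cardinal which by hypothesis is $\geq \mu^+$ (recall $\kappa = \mu^+$ in $\VmuVG$), so I would fix an injection $f\colon \mu^+ \to X$ and color $[\mu^+]^2$ by $g(\{\alpha,\beta\}) = 0$ if $f(\alpha) <_X f(\beta)$ and $g(\{\alpha,\beta\}) = 1$ otherwise, where $\alpha < \beta$. Since $g \in \VmuVG$, Lemma~\ref{lem:partition_kappa} gives a $g$-homogeneous $H \subseteq \mu^+$ of order type $\mu^+$; if the homogeneous color is $0$ then $f\restriction H$ embeds $(\mu^+,\leq)$ into $X$, and if it is $1$ then $f\restriction H$ embeds $(\mu^+,\geq)$ into $X$.

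In the case $f\colon \mu^\mu \to X$, I would color $[\mu^\mu]^2$ using the lexicographic order on $\mu^\mu$: set $g(\{x,y\}) = 0$ if $f(x) <_X f(y)$ and $g(\{x,y\}) = 1$ otherwise, where $x <_{\mathrm{lex}} y$. Again $g \in \VmuVG$, so Lemma~\ref{lem:partition_real} yields a perfect $C \subseteq \mu^\mu$ with $[C]^2$ monochromatic, whence $f\restriction C$ is a strictly order preserving injection of $(C,\leq_{\mathrm{lex}})$ into $X$ (if the color is $0$) or of $(C,\geq_{\mathrm{lex}})$ into $X$ (if the color is $1$). To finish I would show that $(2^\mu,\leq_{\mathrm{lex}})$ embeds into both $(C,\leq_{\mathrm{lex}})$ and $(C,\geq_{\mathrm{lex}})$ for any perfect $C = [T]$: recursively build an order preserving copy of the full binary tree $2^{<\mu}$ inside the splitting nodes of $T$, sending $s\frown 0$ and $s\frown 1$ to splitting extensions of the two direct successors of the splitting node assigned to $s$, taken in increasing (resp.\ decreasing) order of their value at the splitting coordinate; at a limit level $<\mu$ use closedness of $T$ to keep the union of the branch so far inside $T$; and note that, since lengths strictly increase at successor steps, each $x \in 2^\mu$ yields a genuine $\bar e(x) \in \mu^\mu$, in fact a branch of $T$, with $x <_{\mathrm{lex}} y \Rightarrow \bar e(x) <_{\mathrm{lex}} \bar e(y)$ (resp.\ $>_{\mathrm{lex}}$). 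Composing $\bar e$ with $f\restriction C$ then embeds $(2^\mu,\leq_{\mathrm{lex}})$ into $X$ in either subcase.

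Most of the difficulty of the theorem has been front-loaded into Theorem~\ref{thm:dic_set_Solovay} and the partition lemmas, so the case analysis above is essentially bookkeeping. The one step that genuinely uses the uncountable machinery is the limit-level case in the construction of $\bar e$, where closedness of the perfect tree is exactly what guarantees $e(s) \in T$ and that $\bar e(x)$ has domain $\mu$; for $\mu = \omega$ this step is vacuous and the perfect-set embedding is entirely standard. Besides that I would only need to check the routine points that the colorings $g$ lie in $\VmuVG$ (they are defined from $f$, $X$ and $<_X$) and that a homogeneous subset of $\mu^+$ of size $\mu^+$ automatically has order type $\mu^+$.
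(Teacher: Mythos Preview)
Your proposal is correct and follows essentially the same route as the paper: apply Theorem~\ref{thm:dic_set_Solovay} to split into the well-orderable and $\mu^\mu$-injection cases, and in each case use the appropriate partition lemma (Lemma~\ref{lem:partition_kappa} or Lemma~\ref{lem:partition_real}) on the obvious $2$-coloring to extract a monotone sub-injection. The only difference is cosmetic: the paper simply asserts that $(2^\mu,\leq_{\mathrm{lex}})$ embeds into both $(C,\leq_{\mathrm{lex}})$ and $(C,\geq_{\mathrm{lex}})$ for perfect $C$, whereas you spell out the recursive construction through splitting nodes using closedness of the tree at limit stages.
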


\begin{proof}
  Take an arbitrary linearly ordered set $X$.
  By Theorem \ref{thm:dic_set_Solovay}, we can divide the proof into two cases.

  \vskip.5\baselineskip
  
  \noindent\underline{(Case I)} $X$ is well-orderable.

  \vskip.5\baselineskip

  We can take an injection $f\colon \mu^+ \rightarrow X$.
  Define $g\colon [\mu^+]^2 \rightarrow 2$ by
    \[
    g(\{\alpha,\beta\})=\left\{
    \begin{array}{cc}
      0   & (f(\alpha) <_X f(\beta))\\
      1   & (f(\alpha) >_X f(\beta))
    \end{array}
    \right .
    \]
    where $\alpha < \beta$.
    By Lemma \ref{lem:partition_kappa}, there is a $g$-homogeneous $C\subseteq \mu^+$ such that $\mathrm{otp}(C)=\mu^+$.
    If $g( \{ \alpha , \beta \} ) = 0$ for all $\{ \alpha , \beta \} \in [C]^2$ ($\alpha < \beta$), then $f \upharpoonright C$ is an embedding from $(C, \leq )$ to $X$. Otherwise, $f \upharpoonright C$ is an embedding from $( C , \geq )$ to $X$.
    Note that $( \mu^+ , \leq )$ is isomorphic to $(C, \leq )$.
    So either $( \mu^+ , \leq )$ or $( \mu^+ , \geq )$ is embeddable into $X$.
    
  \vskip.5\baselineskip

  \noindent\underline{(Case II)} There is an injection from $\mu^\mu$ to $X$.

  \vskip.5\baselineskip

  Take an injection $f \colon \mu^\mu \rightarrow X$.
  By the same argument as (Case I), using Fact \ref{lem:partition_real} instead of Lemma \ref{lem:partition_kappa}, there is a perfect $C \subseteq \mu^\mu$ such that either $(C, \leq_\mathrm{lex} )$ or $(C, \geq_\mathrm{lex} )$ is embeddable into $X$. 
  Since $( 2^\mu , \leq_\mathrm{lex} )$ is embeddable into both $(C, \leq_\mathrm{lex} )$ and $(C, \geq_\mathrm{lex})$,
  $( 2^\mu , \leq_\mathrm{lex})$ is embeddable into $X$.
\end{proof}

 $(\mbbR, <_{\mbbR})$ is embeddable into $(2^\omega, \leq_{\mathrm{lex}})$ via the binary expansion. 
 Hence, we obtain the following corollary in the Solovay model.

\begin{cor}\label{cor:three_element_basis_Sol}
  Assume $(\ast\ast)_{\omega,\kappa}$.
    In $\VomegaVG$, for any uncountable linear ordered set $X$, at least one of $( \omega_1, \leq )$, $( \omega_1, \geq )$ or $( \mbbR , \leq_{\mbbR} )$ is embeddable into $X$.
\end{cor}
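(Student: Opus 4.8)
The plan is to deduce Corollary~\ref{cor:three_element_basis_Sol} directly from Theorem~\ref{Solovay_basis} by specializing to $\mu = \omega$. First I would note that $(\ast\ast)_{\omega,\kappa}$ is exactly the hypothesis $(\ast\ast)_{\mu,\kappa}$ with $\mu = \omega$, that $\VomegaVG$ is precisely the generalized Solovay model for $\mu = \omega$, and that under this specialization the objects occurring in Theorem~\ref{Solovay_basis} become $\mu^+ = \omega_1$, $\mu^\mu = \omega^\omega$, and $2^\mu = 2^\omega$. Next I would record that, for a linear order $X$, the statement ``$X$ is uncountable'' coincides with ``there is no injection from $X$ into $\omega$'': ``countable'' for an arbitrary set means precisely the existence of an injection into $\omega$, and in any case a surjection $\omega \twoheadrightarrow X$ always yields such an injection by taking least preimages. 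Thus the hypothesis of Theorem~\ref{Solovay_basis} with $\mu = \omega$ is exactly the hypothesis of the corollary.

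With these identifications, Theorem~\ref{Solovay_basis} gives that at least one of $(\omega_1, \leq)$, $(\omega_1, \geq)$, $(2^\omega, \leq_{\mathrm{lex}})$ embeds into $X$. The first two possibilities are already among the conclusions of the corollary. In the third case I would compose the given embedding of $(2^\omega, \leq_{\mathrm{lex}})$ into $X$ with the embedding of $(\mbbR, <_{\mbbR})$ into $(2^\omega, \leq_{\mathrm{lex}})$ recorded immediately before the corollary (the normalized binary expansion); since a composite of order embeddings of linear orders is an order embedding, and since for linear orders an order embedding for $<$ is literally the same map as one for $\leq$, this yields an embedding of $(\mbbR, \leq_{\mbbR})$ into $X$. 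Hence one of $(\omega_1, \leq)$, $(\omega_1, \geq)$, $(\mbbR, \leq_{\mbbR})$ embeds into $X$, which is the assertion.

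I do not anticipate any genuine obstacle here: all the substantive work --- the dichotomy for arbitrary sets in the generalized Solovay model (Theorem~\ref{thm:dic_set_Solovay}) together with the two partition inputs (Lemmas~\ref{lem:partition_kappa} and~\ref{lem:partition_real}) --- has already been carried out inside the proof of Theorem~\ref{Solovay_basis}. The only matters requiring care are the identification of parameters under $\mu = \omega$, the agreement of the two standard readings of ``uncountable'', the transitivity of embeddability, and the innocuous passage between $<$ and $\leq$, none of which costs more than a sentence. Should one wish the corollary to be self-contained, it would suffice to insert the short verification that the binary expansion, chosen to terminate at the dyadic rationals, order-embeds $[0,1)$ into $(2^\omega, \leq_{\mathrm{lex}})$ --- one checks that at the first coordinate where two such expansions differ the smaller real carries $0$ --- together with $\mbbR \cong (0,1) \subseteq [0,1)$; but since the required embedding is already asserted in the text, a citation is enough.
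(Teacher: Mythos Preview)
Your proposal is correct and follows essentially the same approach as the paper: specialize Theorem~\ref{Solovay_basis} to $\mu=\omega$ and compose with the embedding of $(\mbbR,<_\mbbR)$ into $(2^\omega,\leq_{\mathrm{lex}})$ via binary expansion, which the paper states immediately before the corollary. The paper treats this as a one-line consequence and does not spell out the parameter identifications or the meaning of ``uncountable'' as you do, but nothing substantive differs.
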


 Note that the same argument works under the assumption of $\ZF+\AD + V=\LR$, 
 using Fact \ref{fact:dic_set_AD},
 \ref{fact:Galvin} and the well-known fact that $\omega_1 \rightarrow (\omega_1)^2_2$ holds under $\ZF+\AD$. Weinert also noticed this independently.

 \begin{theorem}\label{thm:three_element_basis_AD}
  Assume $\ZF+ \AD + V= \LR$.
  For any uncountable linear ordered set $X$, at least one of $( \omega_1, \leq )$, $( \omega_1, \geq )$ or $( \mbbR , \leq_\mbbR )$ is embeddable into $X$.
\end{theorem}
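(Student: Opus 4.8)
The plan is to run the two-case argument from the proof of Theorem~\ref{Solovay_basis} with $\mu = \omega$, feeding in the $\AD$-versions of its three structural inputs. Concretely, I would use Woodin's dichotomy Fact~\ref{fact:dic_set_AD} in place of Theorem~\ref{thm:dic_set_Solovay}, Galvin's theorem Fact~\ref{fact:Galvin} in place of Lemma~\ref{lem:partition_real}, and the classical partition relation $\omega_1 \to (\omega_1)^2_2$, which holds in $\ZF + \AD$, in place of Lemma~\ref{lem:partition_kappa}; since $\AD$ grants the Baire property to every set of reals, the hypothesis of Fact~\ref{fact:Galvin} is met for free.

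So, fixing an uncountable linear order $X$, Fact~\ref{fact:dic_set_AD} splits the argument into two cases. If $X$ is well-orderable, then being uncountable it admits an injection $f \colon \omega_1 \to X$, and I would colour $[\omega_1]^2$ by setting $g(\{\alpha,\beta\}) = 0$ or $1$ according as $f$ preserves or reverses the order of $\alpha < \beta$; a $g$-homogeneous $C \subseteq \omega_1$ of order type $\omega_1$ then makes $f \restrict C$ an embedding of $(\omega_1,\leq) \cong (C,\leq)$ or of $(\omega_1,\geq)\cong(C,\geq)$ into $X$. If instead there is an injection $f \colon \mbbR \to X$, I would partition $[\mbbR]^2 = P_0 \sqcup P_1$ by whether $f$ preserves or reverses order; both pieces have the Baire property, so Fact~\ref{fact:Galvin} yields a perfect $C \subseteq \mbbR$ with $[C]^2 \subseteq P_i$, whence $f \restrict C$ embeds $(C,\leq_\mbbR)$ or $(C,\geq_\mbbR)$ into $X$. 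Finally, exactly as noted for Corollary~\ref{cor:three_element_basis_Sol}, a perfect subset of $\mbbR$ contains an order copy of $(2^\omega,\leq_{\mathrm{lex}})$ obtained by splitting along a perfect subtree, and $(\mbbR,\leq_\mbbR)$ embeds into both $(2^\omega,\leq_{\mathrm{lex}})$ and $(2^\omega,\geq_{\mathrm{lex}})$, so in the second case $(\mbbR,\leq_\mbbR)$ embeds into $X$ in either orientation.

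There is no real obstacle beyond the cited inputs: the hard content is packaged into Woodin's Fact~\ref{fact:dic_set_AD}, and the remainder is the bookkeeping already carried out for Theorem~\ref{Solovay_basis}. The one detail I would be careful to justify is that $\omega_1 \to (\omega_1)^2_2$ follows from $\ZF + \AD$ alone (via the measurability of $\omega_1$, equivalently the ultrafilter property of the club filter on $\omega_1$), so that the partition step needs nothing beyond the hypotheses of the theorem; the reduction of a general linear order to the two cases and the internal colouring arguments are uniform with the $\mu > \omega$ proof and require no new idea.
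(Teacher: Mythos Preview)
Your proposal is correct and follows exactly the approach the paper indicates: the paper does not spell out a separate proof but simply notes that the argument of Theorem~\ref{Solovay_basis} goes through under $\ZF+\AD+V=\LR$ once one substitutes Fact~\ref{fact:dic_set_AD}, Fact~\ref{fact:Galvin} (with the Baire property supplied by $\AD$), and the $\AD$ partition relation $\omega_1\to(\omega_1)^2_2$ for their Solovay-model counterparts, which is precisely what you do.
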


 An uncountable linear order is an \textit{Aronszajn line} if it does not contain any suborder isomorphic to $(\omega_1, \leq)$, $(\omega_1, \geq)$ or an uncountable subset of $(\mbbR, \leq_\mbbR)$.

 By Corollary \ref{cor:three_element_basis_Sol} and Theorem \ref{thm:three_element_basis_AD}, we obtain the following.

 \begin{cor}
  Assume $(\ast\ast)_{\omega,\kappa}$.
  In $\VomegaVG$, there are no Aronszajn lines.
 \end{cor}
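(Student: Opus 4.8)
The plan is to read this off directly from Corollary~\ref{cor:three_element_basis_Sol}, with essentially no extra work. First I would unwind the definition just stated above: a linear order $X$ is an Aronszajn line precisely when $X$ is uncountable and $X$ contains no suborder isomorphic to $(\omega_1,\leq)$, to $(\omega_1,\geq)$, or to an uncountable subset of $(\mbbR,\leq_\mbbR)$. So to prove the corollary it suffices to show that in $\VomegaVG$ every uncountable linear order contains at least one of these three kinds of suborders.

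So, working in $\VomegaVG$ under $(\ast\ast)_{\omega,\kappa}$, I would take an arbitrary uncountable linearly ordered set $X$ and apply Corollary~\ref{cor:three_element_basis_Sol}, which gives an embedding of at least one of $(\omega_1,\leq)$, $(\omega_1,\geq)$, $(\mbbR,\leq_\mbbR)$ into $X$. In the first two cases $X$ immediately has a suborder isomorphic to $(\omega_1,\leq)$ or $(\omega_1,\geq)$, so $X$ is not Aronszajn. In the last case, an embedding $e\colon(\mbbR,\leq_\mbbR)\to X$ restricted to all of $\mbbR$ exhibits a suborder of $X$ isomorphic to $(\mbbR,\leq_\mbbR)$; since $\mbbR$ is itself an uncountable subset of $(\mbbR,\leq_\mbbR)$, this again witnesses that $X$ is not Aronszajn. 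Hence no uncountable linear order in $\VomegaVG$ can be an Aronszajn line, which is exactly the assertion.

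There is no real obstacle here; the only point worth a sentence is the elementary observation that ``$(\mbbR,\leq_\mbbR)$ is embeddable into $X$'' implies ``$X$ has a suborder isomorphic to an uncountable subset of $(\mbbR,\leq_\mbbR)$,'' which is clear. I would also remark, mirroring the comment after Theorem~\ref{thm:three_element_basis_AD}, that the identical argument using Theorem~\ref{thm:three_element_basis_AD} in place of Corollary~\ref{cor:three_element_basis_Sol} shows there are no Aronszajn lines in $\LR$ under $\ZF+\AD+V=\LR$ as well, should the authors wish to state that variant.
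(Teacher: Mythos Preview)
Your proposal is correct and matches the paper's approach: the paper simply states that the corollary follows from Corollary~\ref{cor:three_element_basis_Sol} (and Theorem~\ref{thm:three_element_basis_AD} for the $\AD$ variant) without writing out any further argument, and your unwinding of the definition is exactly the intended one-line deduction.
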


 \begin{cor} Assume $\ZF+ \AD + V= \LR$. There are no Aronszajn lines.
 \end{cor}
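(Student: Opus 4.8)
The plan is to read this off Theorem \ref{thm:three_element_basis_AD} directly, arguing by contradiction. Suppose $X$ is an Aronszajn line. By definition $X$ is an uncountable linear order, and $X$ contains no suborder isomorphic to $(\omega_1,\leq)$, to $(\omega_1,\geq)$, or to an uncountable subset of $(\mbbR,\leq_\mbbR)$. I would apply Theorem \ref{thm:three_element_basis_AD} to $X$, which yields that at least one of $(\omega_1,\leq)$, $(\omega_1,\geq)$, $(\mbbR,\leq_\mbbR)$ embeds into $X$.

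In the first two cases this is an immediate contradiction with the definition of an Aronszajn line. In the third case, the image of an embedding of $(\mbbR,\leq_\mbbR)$ into $X$ is a suborder of $X$ that is order-isomorphic to $\mbbR$; since $\mbbR$ is itself an uncountable subset of $(\mbbR,\leq_\mbbR)$ (namely the whole set, which is uncountable under $\AD$), this too contradicts the Aronszajn hypothesis. Hence every uncountable linear order fails to be Aronszajn, i.e.~there are no Aronszajn lines.

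I do not anticipate any genuine obstacle here: all of the content is already packaged into Theorem \ref{thm:three_element_basis_AD}, whose proof in turn rests on Fact \ref{fact:dic_set_AD}, Fact \ref{fact:Galvin}, and the $\AD$-consequence $\omega_1 \to (\omega_1)^2_2$; the present corollary is a pure translation between the ``three element basis'' formulation and the ``no Aronszajn line'' formulation, and the only step to check is the trivial observation that a copy of $\mbbR$ inside $X$ witnesses the forbidden clause in the definition. If desired, one can also remark that the statement is non-vacuous, since $(\omega_1,\leq)$ and $(\mbbR,\leq_\mbbR)$ are themselves uncountable linear orders which the theorem correctly predicts are not Aronszajn.
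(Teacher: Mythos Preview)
Your proposal is correct and matches the paper's approach: the paper simply states that this corollary follows directly from Theorem~\ref{thm:three_element_basis_AD}, and your argument is precisely the straightforward unpacking of why an embedding of $(\omega_1,\leq)$, $(\omega_1,\geq)$, or $(\mbbR,\leq_\mbbR)$ into $X$ contradicts the definition of an Aronszajn line.
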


\bibliographystyle{plain}
\bibliography{dichotomy}
\end{document}